\theoremstyle{definition}
\newtheorem{theorem}{Theorem}[section]
\newtheorem{proposition}{Proposition}
\newtheorem{lemma}{Lemma}
\newtheorem{corollary}{Corollary}
\newtheorem{definition}{Definiton}
\newtheorem{remark}{Remark}
\newtheorem{Ex}{Example}
\newtheorem{conjecture}[theorem]{Conjecture}
\title{The word problem for some classes of Adian inverse semigroups}
\author{Muhammad Inam}
\email{s-minam1@math.unl.edu}
\address{Department of Mathematics\\
University of West Georgia\\
Carrollton, Georgia 30118 USA}
\begin{document}

\date{\today}

\begin{abstract}
We show that all of the Sch\"{u}tzenberger complexes of an Adian inverse semigroup are finite if the Sch\"{u}tzenberger complex of every positive word is finite. This enables us to solve the word problem for certain classes of Adian inverse semigroups (and hence for the corresponding Adian semigroups and Adian groups).
\end{abstract}

\maketitle

\section
{Introduction}

Groups and semigroups are often presented by a pair $\langle X|R\rangle$,where $X$ denotes the set of set of generators and $R$ denotes the set of defining relations.  If $R=\{(u,v)|u,v\in X^+\}$, then the presentation $\langle X|R\rangle$ is called a \textit{positive presentation}. We consider positive presentations throughout this paper. We can construct two undirected graph corresponding to a positive presentation $\langle X|R\rangle$. These graphs are called the \textit{left graph} and the \textit{right graph} of the presentation and are denoted by $LG\langle X|R\rangle$ and $RG\langle X|R\rangle$ respectively.  The vertices of the $LG\langle X|R\rangle$ and the $RG\langle X|R\rangle$ are labeled by the elements of the set $X$. In the $LG\langle X|R\rangle$, we obtain an edge by joining the vertex labeled by the prefix letter of $u$ with the vertex labeled by the prefix letter of $v$ for all $(u,v)\in R$.  The $RG\langle X|R\rangle$ is constructed dually by joining the vertex labeled by the suffix letter of $u$ with the vertex labeled by suffix letter of $v$ for all $(u,v)\in R$. If there is no cycle in the left and the right graph of a presentation then the presentation is called a \textit{cycle free presentation} or an \textit{Adian presentation}. These presentation were first studied by S. I. Adian \cite{AD}, where is shown that the finitely presented Adian semigroups $Sg\langle X|R\rangle$ embeds in the corresponding Adian $Gp\langle X|R\rangle$. Latter in \cite{RM} John H. Remmers generalized this result to any Adian presentation and proved that an Adian semigroup $Sg\langle X|R\rangle$ embeds in the corresponding Adian group $Gp\langle X|R\rangle$. Unless stated otherwise, throughout this paper we consider our presentations to be Adian presentations.

A semigroup $S$ is called an \textit{inverse semigroup} if for every element $a\in S$ there exists a unique element $b\in S$ such that $aba=a$ and $bab=b$. This unique element $b$ is called the \textit{inverse of $a$} and denoted by $a^{-1}$. The \textit{natural partial order} on the elements of an inverse semigroup $S$ is defined as $a\leq b$ if and only if $a=aa^{-1}b$, for some $a,b\in S$.  The congruence relation $\sigma$ on $S$ is defined as for $a,b\in S$, $a\sigma b$ if and only if there exists an element $c\in S$ such that $c\leq a b$. It turns out that the $\sigma$ is the minimal group congruence on $S$. So, $S/\sigma$ is the maximal group homomorphic image of $S$ and if $S$ is presented by a presentation $Inv\langle X|R\rangle$, then $S/\sigma$ is isomorphic corresponding group $Gp\langle X|R\rangle$. Detailed proofs of these facts about inverse semigroups are provided in the text \cite{IS}. In order to be consistent with most of the literature about inverse semigroups, we abuse the notation of $\sigma$ and we also denote the natural homomorphism from $inv\langle X|R\rangle$ to $Gp\langle X|R\rangle$ by $\sigma$. An inverse semigroup $S$ is called \textit{$E$-unitary} if $\sigma^{-1}(1)=\{e| e^2\underset{S}{=}e\}$.

 J. B. Stephen \cite{SG} introduced the notion of \textit{Sch\"{u}tzenberger graphs} to solve the word problem for inverse semigroups. If $M=Inv\langle X|R\rangle$ is an inverse semigroup then we may consider the corresponding Cayley graph $\Gamma(M,X)$. The vertices of this graph are labeled by the elements of $M$ and there exists a directed edge labeled by $x\in X\cup X^{-1}$ from the vertex labeled by $m_1$ to the vertex labeled by $m_2$ if $m_2\equiv m_1x$.   The Cayley graph $\Gamma(M,X)$ is not necessarily strongly connected, unless $M$ happens to be a group, therefore there may not be an edge labeled by $x^{-1}$ from $m_2$ to $m_1$. The strongly connected components of $\Gamma(M,X)$ are called the \textit{Sch\"{u}tzenberger graphs} of $M$. For any word $u\in (X\cup X^{-1})^*$ the strongly connected component of $\Gamma(M,X)$ that contains a vertex labeled by $u$ is the \textit{Sch\"{u}tzenberger graph} of $u$ and it is denoted by $S\Gamma(M,X,u)$. The vertices of $S\Gamma(M,X,u)$ are labeled by the elements of $\mathscr{R}$-class of $u$, i.e., $R_u=\{m\in M| mm^{-1}\underset{M}{=}uu^{-1}\}$, because if $x\in X\cup X^{-1}$ labels an edge from a vertex labeled $m\in M$ to a vertex labeled by $mx$, then there exists an edge labeled by $x^{-1}$ from $mx$ to $m$  in $\Gamma(M,X)$ if and only if $m\mathscr{R}(mx)$.

 There exists a natural graph morphism (not necessarily injective) from a Sch\"{u}tzenberger graph of $M$ to the Cayley graph of $M$. Meakin showed that each Sch\"{u}tzenberger graph of $M$ embeds in the Cayley graph of $M$ if and only if $M$ is $E$-unitary.

 It is useful to consider \textit{Sch\"{u}tzenberger automaton} $(uu^{-1},S\Gamma(M,X,u),u)$ with initial vertex $uu^{-1}\in M$, terminal vertex $u\in M$ and set of states are the vertices of $S\Gamma(M,X,u)$. The language accepted by this automaton is

 \begin{center}
 $L(u)=\{v\in M|v$ labels a path  from $uu^{-1}$ to $u$ in $S\Gamma(M,X,u)\}$.

 \end{center}
Here $u$ and $v$ are regarded as elements of both $(X\cup X^{-1})^*$ and $M$. So, $L(u)$ is a subset of $M$.

The following result of Stephen \cite{SG} plays a key role in solving the word problem for inverse semigroups.
\begin{theorem}\label{Stephen's thm}
Let $M=Inv\langle X|R\rangle$ and let $u,v\in (X\cup X^{-1})^*$ Then\\
\begin{enumerate}

\item $L(u)= \{v\in M|v\geq u$ in the natural partial order on $M\}$.

\item $u\underset{M}{=} v $$\Leftrightarrow $$L(u)=L(v)$ $ \Leftrightarrow$ $ u\in L(v)$ and $v\in L(u)$$ \Leftrightarrow$$ (uu^{-1},S\Gamma(M,X,u),$\\$u)$ and $(vv^{-1},S\Gamma(M,X,v),v)$ are isomorphic as birooted edge-labeled graphs.

\end{enumerate}
\end{theorem}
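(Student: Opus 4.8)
The plan is to translate everything into statements about paths in the Cayley graph $\Gamma(M,X)$ and to lean on two elementary facts about an inverse semigroup $M$: the description $a\le b\iff a=aa^{-1}b$ of the natural partial order, and its antisymmetry. The one structural input I would use, already recorded in the text, is that the strongly connected components of $\Gamma(M,X)$ are exactly the $\mathscr{R}$-classes of $M$, so that $S\Gamma(M,X,u)$ is the full induced subgraph of $\Gamma(M,X)$ on the vertex set $R_u$.

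To prove (1), the starting observation is that $\Gamma(M,X)$ is deterministic: a word $w=x_1\cdots x_n\in(X\cup X^{-1})^*$ read from the vertex $uu^{-1}$ traces a unique path, through the vertices $uu^{-1}x_1,\ uu^{-1}x_1x_2,\ \dots$, ending at $uu^{-1}w$ (words read as elements of $M$). Hence $w$ labels a path from $uu^{-1}$ to $u$ in $\Gamma(M,X)$ precisely when $uu^{-1}w=u$, which by definition of $\le$ means $u\le w$. Two checks then finish (1). First, $(uu^{-1})(uu^{-1})^{-1}=uu^{-1}$ gives $uu^{-1}\in R_u$, and since $uu^{-1}u=u$ there are paths labelled $u$ and $u^{-1}$ running from $uu^{-1}$ to $u$ and back, so $uu^{-1}$ lies in the component $S\Gamma(M,X,u)$. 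Second, whenever $uu^{-1}w=u$, each intermediate vertex $uu^{-1}x_1\cdots x_k$ is reachable from $uu^{-1}$ along part of the $w$-path and reaches $uu^{-1}$ along the rest of it followed by the path labelled $u^{-1}$ from $u$; hence it too lies in $S\Gamma(M,X,u)$, and the entire $w$-path runs inside $S\Gamma(M,X,u)$. Combined with the trivial converse --- a path of $S\Gamma(M,X,u)$ from $uu^{-1}$ to $u$ is in particular a path of $\Gamma(M,X)$, so its label $w$ satisfies $uu^{-1}w=u$ --- this yields $L(u)=\{w\in M:w\ge u\}$, and in particular $u\in L(u)$ (take $w=u$).

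Part (2) is then a cycle of four implications. If $u\underset{M}{=}v$ then $uu^{-1}=vv^{-1}$, so $R_u=R_v$ and $S\Gamma(M,X,u)$, $S\Gamma(M,X,v)$ are the same subgraph of $\Gamma(M,X)$ with the same initial vertex $uu^{-1}=vv^{-1}$ and the same terminal vertex $u=v$; the two birooted automata are then literally identical, hence isomorphic. An isomorphism of birooted edge-labelled graphs carries initial-to-terminal paths bijectively and preserves their labels, so it forces $L(u)=L(v)$. If $L(u)=L(v)$, then since $u\in L(u)$ and $v\in L(v)$ we get $u\in L(v)$ and $v\in L(u)$. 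Finally, if $u\in L(v)$ and $v\in L(u)$, then applying (1) to $v$ and to $u$ gives $v\le u$ and $u\le v$, so $u\underset{M}{=}v$ by antisymmetry; this closes the cycle and shows all four statements equivalent.

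The only step that is not a formality --- and hence the expected main obstacle --- is the claim that a path of $\Gamma(M,X)$ joining two $\mathscr{R}$-equivalent vertices stays inside their common Schützenberger graph, which is exactly what lets the ``obvious'' path (reading $w$ from $uu^{-1}$) be recognised as a path of $S\Gamma(M,X,u)$. It rests on the identification of strongly connected components with $\mathscr{R}$-classes together with the determinism of $\Gamma(M,X)$; everything else reduces to the identities $aa^{-1}a=a$ and $(aa^{-1})^{-1}=aa^{-1}$ and to the antisymmetry of the natural partial order.
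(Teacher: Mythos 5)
The paper does not prove this result---it is quoted from Stephen \cite{SG} and used as a black box---so there is no in-paper argument to compare against. Your proof is correct and is essentially the standard one: you isolate the only nontrivial point in part (1), namely that once $uu^{-1}w=u$ the whole path traced by $w$ from $uu^{-1}$ lies inside the strongly connected component (each intermediate vertex returns to $uu^{-1}$ via the remainder of the $w$-path followed by the $u^{-1}$-path), and part (2) is the expected four-step cycle closed by antisymmetry of the natural partial order.
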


 We briefly describe the iterative procedure described by Stephen \cite{SG} for building a Sch\"{u}tzenberger graph. Let $Inv\langle X|R\rangle$  be a presentation of an inverse monoid.

   Given a word $u=a_1a_2...a_n\in (X\cup X^{-1})^*$, the \textit{linear graph} of $u$  is the birooted inverse word graph $(\alpha_u,\Gamma_u,\beta_u)$ consisting of the set of vertices

   \begin{center}

   $V((\alpha_u,\Gamma_u,\beta_u))=\{\alpha_u,\beta_u,\gamma_1,...,\gamma_{n-1}\}$

   \end{center}

   and edges

   \begin{center}

   $(\alpha _u,a_1, \gamma _1),(\gamma _1,a_2,\gamma _2),..., (\gamma _{n-2},a_{n-1},\gamma _{n-1}),(\gamma _{n-1},a_n,\beta _u)$,

   \end{center}

    together with the corresponding inverse edges.

    Let $(\alpha , \Gamma ,\beta )$ be a birooted inverse word graph over $X\cup X^{-1}$. The following operations may be used to obtain a new birooted inverse word graph $(\alpha ',\Gamma ',\beta ')$:

    $\bullet$ \textbf{Determination} or \textbf{folding:} Let $(\alpha,\Gamma,\beta)$ be a birooted inverse word graph with vertices $v,v_1,v_2$, with $v_1\neq v_2$, and edges $(v,x,v_1)$ and $(v,x,v_2)$ for some $x\in X\cup X^{-1}$.

    Then we obtain a new birooted inverse word graph $(\alpha',\Gamma',\beta')$ by taking the quotient of $(\alpha,\Gamma,\beta)$ by the equivalence relation which identifies the vertices $v_1$ and $v_2$ and the two edges. In other words, edges with the same label coming out of a vertex are folded together to become one edge.

    $\bullet$ \textbf{Elementary $\mathscr{P}$-expansion:} Let $r=s$ be a relation in $R$ and suppose that $r$ can be read from $v_1$ to $v_2$ in $\Gamma$, but $s$ cannot be read from $v_1$ to $v_2$ in $\Gamma$. Then we define $(\alpha',\Gamma',\beta')$ to be the quotient of $\Gamma \cup (\alpha _s,\Gamma_s,\beta_s)$ by the equivalence relation which identifies vertices $v_1$ and $\alpha_s$ and vertices $v_2$ and $\beta_s$. In other words. we  ``sew" on a linear graph for $s$ from $v_1$ to $v_2$ to complete the other half of the relation $r=s$.

    An inverse word graph is \textit{deterministic} if no folding can be performed and \textit{closed} if it is deterministic and no elementary expansion can be performed over a presentation $\langle X|R\rangle$. Note that given a finite inverse word graph it is always possible to produce a determinized form of the graph, because determination reduces the number of vertices. So, the process of determination must stop after finitely many steps, We note also that the process of folding is confluent \cite{SG} .

    If $(\alpha_1,\Gamma_1, \beta_1)$ is obtained from $(\alpha,\Gamma,\beta)$ by an elementary $\mathscr{P}$-expansion, and $(\alpha_2,\Gamma_2,\beta_2)$ is the determinized  form of $(\alpha_1,\Gamma_1,\beta_1)$, then we write $(\alpha,\Gamma,\beta)$\\$\Rightarrow (\alpha_2,\Gamma_2,\beta_2)$ and say that $(\alpha_2,\Gamma_2,\beta_2)$ is obtained from $(\alpha, \Gamma,\beta)$ by a  \textit{$\mathscr{P}$-expansion}. The reflexive and transitive closure of $\Rightarrow$ is denoted by $\Rightarrow ^*$.

    For $u\in (X\cup X^{-1})^*$, an \textit{approximate graph} of $(uu^{-1}, S\Gamma(u), u)$ is a birooted inverse word graph $A=(\alpha,\Gamma,\beta)$ such that $u\in L[A]$ and $y\geq u$ for all $y\in L[A]$. Stephen showed in \cite{SG} that the linear graph of $u$ is an approximate graph of $(uu^{-1}, S\Gamma(u), u)$. He also proved the following:

    \begin{theorem}\label{closure}
    Let $u\in (X\cup X^{-1})$ and let $(\alpha,\Gamma,\beta)$ be an approximate graph of $(uu^{-1},S\Gamma(u), u)$. If $(\alpha,\Gamma,\beta)\Rightarrow^*(\alpha',\Gamma',\beta')$ and $(\alpha',\Gamma',\beta')$ is closed , then $(\alpha',\Gamma',\beta')$ is the Sch\"{u}tzenberger graph of $u$, $(uu^{-1},S\Gamma(u), u)$.
    \end{theorem}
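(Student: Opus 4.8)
The plan is to determine the language accepted by the Sch\"{u}tzenberger automaton built on $A':=(\alpha',\Gamma',\beta')$ and then to recognise the graph itself. First I would show that a single $\mathscr{P}$-expansion sends an approximate graph of $(uu^{-1},S\Gamma(u),u)$ to another approximate graph; iterating, $A'$ is then a closed approximate graph, so $u\in L[A']$ and $L[A']\subseteq L(u)$. For a folding, the quotient map is a morphism of birooted inverse word graphs, so no path labels are lost, and none with label outside $L(u)$ are gained, since identifying the endpoints of two equally labelled edges leaving a common vertex is forced in the deterministic graph $S\Gamma(u)$. For an elementary expansion by a relation $r=s$, a path labelled $r$ already joins $v_1$ to $v_2$, and since $M$ satisfies $r=s$ any word routed through the newly attached copy of $\Gamma_s$ equals, as an element of $M$, one already readable along a path avoiding it; so $u$ is retained and the language stays inside $L(u)$.

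Next comes the heart of the matter: computing $L[A']$ exactly. Since $A'$ is folded, reading a word from $\alpha'$ depends only on its image in the free inverse monoid on $X$; and since no elementary $\mathscr{P}$-expansion is possible, whenever a relator $r$ labels a path between two vertices so does the other side $s$, so reading from $\alpha'$ also respects every defining relation. Hence reading from $\alpha'$ respects $\equiv_M$: if $w\equiv_M w'$ and $w$ labels a path $\alpha'\to\beta'$, then so does $w'$, and thus $L[A']$, viewed in $M$, is a union of $\equiv_M$-classes. Now $u\in L[A']$, and by Theorem~\ref{Stephen's thm}(1) together with the first paragraph, $L[A']\subseteq L(u)=\{w\in M:w\geq u\}$. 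Conversely, if $w\geq u$ then $u=uu^{-1}w$ in $M$, so the word $uu^{-1}w$ is $\equiv_M$-equal to $u\in L[A']$ and hence lies in $L[A']$; as $u\in L[A']$ the word $uu^{-1}$ labels a loop at $\alpha'$, so $w$ labels a path $\alpha'\to\beta'$. Therefore $L[A']=L(u)=L[S\Gamma(u)]$.

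It remains to recognise the graph. Both $A'$ and $S\Gamma(u)$ are connected folded birooted inverse word graphs, hence strongly connected, with the same accepted language $L(u)$. Equal languages yield root-fixing morphisms in each direction: a vertex reached from the initial vertex by a word $w$ can be tracked in the other graph, because by strong connectivity $w$ extends to a word of $L(u)$, which labels a path from the initial vertex there; well-definedness uses that $S\Gamma(u)$ is co-deterministic (it is a strong component of the Cayley graph) for one direction, and the $\equiv_M$-invariance above for the other. The two composite morphisms are root-fixing endomorphisms of connected deterministic birooted graphs, so they are the identities, whence $A'\cong(uu^{-1},S\Gamma(u),u)$; that is, $(\alpha',\Gamma',\beta')$ is the Sch\"{u}tzenberger graph of $u$. (One could instead quote Theorem~\ref{Stephen's thm}(2).)

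I expect the crux to be the structural fact underlying the second paragraph --- that a closed inverse word graph genuinely computes in $M$, i.e.\ ``folded'' encodes exactly the free inverse monoid identities and ``no elementary $\mathscr{P}$-expansion possible'' encodes exactly the relations $R$. The invariance step of the first paragraph is routine but needs care about when a folding or an expansion is forced (and about $S\Gamma(u)$ being closed). Once both are in hand the remaining steps are short, everything else being formal manipulation of deterministic, co-deterministic and connected birooted inverse word graphs.
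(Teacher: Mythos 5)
The paper does not prove this statement at all --- it is quoted verbatim from Stephen \cite{SG} as a known result --- so there is no in-paper argument to compare against. Your proof is essentially Stephen's original one: (a) elementary expansions and foldings preserve the approximate-graph property, so the closed limit $A'$ satisfies $u\in L[A']\subseteq L(u)$; (b) closedness makes $L[A']$ upward closed in the natural partial order (your $uu^{-1}w$ trick), giving $L[A']=L(u)$; (c) two closed deterministic birooted inverse word graphs with the same language are isomorphic. That is the right decomposition and it goes through.

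One imprecision worth fixing in step (a): for an elementary expansion it is not true that every word routed through the newly attached copy of $\Gamma_s$ \emph{equals} in $M$ a word readable in the old graph. A partial excursion into the new copy reads a factor $s_1s_1^{-1}$ (a prefix $s_1$ of $s$), and $p\,s_1s_1^{-1}q$ need not equal anything previously readable; what is true is $p\,s_1s_1^{-1}q\geq p\,rr^{-1}q$, and $p\,rr^{-1}q$ \emph{is} readable in the old graph, so the new word is still $\geq u$. Similarly, for foldings the clean argument is that any new word becomes readable in the unfolded graph after inserting factors $xx^{-1}$, and deleting such factors only increases a word in the natural partial order --- your stated justification tacitly assumes a root-preserving morphism onto $S\Gamma(u)$, which is not part of the definition of an approximate graph. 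Also, the parenthetical suggestion to quote Theorem~\ref{Stephen's thm}(2) does not quite work, since that statement compares two Sch\"utzenberger automata, whereas here one of the two graphs is not yet known to be one. These are local repairs; the overall argument is sound.
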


    In \cite{SG}, Stephen showed that the class of all birooted inverse words graphs over $X\cup X^{-1}$ is a co-complete category  and that the directed system of all finite $\mathscr{P}$-expansions of a linear graph of $u$ has a direct limit. Since the directed system includes all possible $\mathscr{P}$-expansions, this limit must be closed. Therefore, by \ref{closure}, the Sch\"{u}tzenberger graph is the direct limit.

    \textbf{Full $\mathscr{P}$- expansion (a generalization of the concept of $\mathscr{P}$-\\ expansion):} A full $\mathscr{P}$-expansion of a birooted inverse word graph $(\alpha,\Gamma,\beta)$ is obtained in the following way:

    $\bullet$ Form the graph $(\alpha',\Gamma',\beta')$, which is obtained from $(\alpha,\Gamma,\beta)$ by performing all possible elementary $\mathscr{P}$-expansions of $(\alpha,\Gamma,\beta)$, relative to $(\alpha,\Gamma,\beta)$. We emphasize  that an elementary $\mathscr{P}$-expansion may introduce a path labeled by one side of relation in $R$, but we do not perform an elementary $\mathscr{P}$-expansion that could not be done to $(\alpha,\Gamma,\beta)$ when we do a full $\mathscr{P}$-expansion.

    $\bullet$ Find the determinized form $(\alpha_1,\Gamma_1,\beta_1)$, of $(\alpha',\Gamma',\beta')$.

    The birooted inverse word graph $(\alpha_1,\Gamma_1,\beta_1)$ is called the full $\mathscr{P}$-expansion of $(\alpha,\Gamma,\beta)$. We denote this relationship by $(\alpha,\Gamma,\beta)\Rightarrow_f (\alpha_1,\Gamma_1,\beta_1)$.  If $(\alpha_n,\Gamma_n,\beta_n)$ is obtained from $(\alpha,\Gamma ,\beta)$ by a sequence of full $\mathscr{P}$-expansions then we denote this by   $(\alpha,\Gamma ,\beta)\Rightarrow^*_f(\alpha_n,\Gamma_n,\beta_n)$.

We now expand the notion of Sch\"{u}tzenberger graph to the Sch\"{u}tzenberger complexes. The Sch\"{u}tzenberger complexes were first defined by Steinberg in \cite{SB}. Later in \cite{SL}, Steven Linblad made a small modification in Steinberg's  definition of Sch\"{u}tzenberger complexes. In this paper, we are using  Linblad's definition of Sch\"{u}tzenberger complexes.   Let $M=\langle X|R\rangle $ be an inverse monoid and $m\in M$. The \textit{Sch\"{u}utzenberger complex} $SC(m)$ for $m\in M$ is defined as follows:

(1) The $1$-skeleton  of $SC(m)$ is the Sch\"{u}tzenberger graph $S\Gamma(m)$.

(2) For each relation $(r,s) \in R$ and vertex $v$, if $r$ and $s$ can be read at $v$, then there is a face with boundary given by the pair of paths labeled by $r$ and $s$ starting from $v$.

In similar manner, Stephen's approximate graphs can be viewed as approximate complexes by sewing on a face each time an elementary expansion is performed, and identifying faces if a determination results in their entire boundaries being identified.

\section{The word problem for Adian semigroups, Adian inverse semigroups and Adian groups}

The following theorem was first proved by Adian in \cite{AD} for finite presentations. Later, it was generalized by Remmers to any Adian presentation, in \cite{RM}, by using a geometric approach.

\begin{theorem}\label{embedding}
An Adian semigroup $Sg\langle X|R\rangle$ embeds in the corresponding Adian group $Gp\langle X|R\rangle$.
\end{theorem}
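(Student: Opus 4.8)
The plan is to follow the geometric strategy of Remmers, since the whole force of the theorem comes from the Adian hypothesis on the left and right graphs. The identity map on $X$ induces a semigroup homomorphism $\phi\colon Sg\langle X\mid R\rangle\to Gp\langle X\mid R\rangle$, because every relation of $R$ holds in the group; and since both objects are generated by the image of $X$, and every element of $Sg\langle X\mid R\rangle$ is represented by a word in $X^{+}$, injectivity of $\phi$ is equivalent to the assertion: for $u,v\in X^{+}$, if $u\underset{Gp}{=}v$ then $u\underset{Sg}{=}v$. So fix such $u,v$. From $u\underset{Gp}{=}v$ we get $uv^{-1}\underset{Gp}{=}1$, so van Kampen's lemma furnishes a van Kampen diagram $\Delta$ over the group presentation $\langle X\mid\{rs^{-1}:(r,s)\in R\}\rangle$ whose boundary, read from a base vertex, is $uv^{-1}$, and we may take $\Delta$ reduced. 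Thus $\partial\Delta$ splits as a positive path labeled $u$ followed by the reverse of a positive path labeled $v$, and each $2$-cell of $\Delta$ is bounded by a positive $r$-path and a positive $s$-path sharing both endpoints, for some $(r,s)\in R$.

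The crux is a combing lemma: because $LG\langle X\mid R\rangle$ and $RG\langle X\mid R\rangle$ contain no cycles, the edges of $\Delta$ can be oriented consistently so that $\Delta$ becomes an honest semigroup derivation diagram, with top boundary $u$, bottom boundary $v$, and every cell the relation $r=s$ read from top to bottom. The local picture at an interior vertex $w$ is that the $r$-side and the $s$-side of each incident cell meet $w$ either both at their initial letters (so $w$ is a ``source'' of that cell) or both at their terminal letters (a ``sink''); a directed cycle in the flow that the cell-orientations impose on $\Delta$ would, by tracing prefix letters around it, project to a closed path in $LG\langle X\mid R\rangle$ in the source-type case, and dually, via suffix letters, to a closed path in $RG\langle X\mid R\rangle$ in the sink-type case --- either way contradicting cycle-freeness. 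Reducedness of $\Delta$ rules out a cell cancelling against its mirror image, so no ``dipole'' obstructs the orientation. Once $\Delta$ is combed, slicing it along successive horizontal cross-sections produces positive words $u=w_{0},w_{1},\dots,w_{k}=v$ in which each step replaces one side of a relation of $R$ by the other; hence $u\underset{Sg}{=}v$, and $\phi$ is injective.

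The step I expect to be the main obstacle is exactly this combing lemma: converting ``no cycles in $LG$ and $RG$'' into ``$\Delta$ admits a consistent top-to-bottom orientation whose induced flow is acyclic'' requires a careful analysis of the local configurations at interior and boundary vertices and a genuine argument that directed cycles in the flow cannot occur, and this is the only place where the Adian hypothesis is used. An alternative that stays within the machinery developed above would be to build $S\Gamma(u)$ for the Adian inverse monoid $Inv\langle X\mid R\rangle$ from the linear graph of $u$ by $\mathscr{P}$-expansions: the cycle-free conditions should force the resulting Sch\"{u}tzenberger complex to retain an acyclic ``positive'' structure with a unique source $uu^{-1}$ and a unique sink $u$, from which one reads off both $u\underset{Inv}{=}v$ (for positive $u,v$ with $u\underset{Gp}{=}v$, using that $u\,\sigma\,v$ in $Inv\langle X\mid R\rangle$) and $u\underset{Sg}{=}v$; but verifying that positivity and acyclicity are preserved under folding and elementary expansion is again precisely where cycle-freeness must be invoked, so the essential difficulty is the same.
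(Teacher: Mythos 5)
The paper does not prove this theorem at all: it is quoted from Adian \cite{AD} (finite presentations) and Remmers \cite{RM} (general Adian presentations), so there is no in-paper argument to compare against. What you have written is a reconstruction of Remmers' geometric proof, and the strategy is the right one: reduce injectivity of $Sg\langle X|R\rangle\to Gp\langle X|R\rangle$ to showing that a reduced van Kampen diagram for $uv^{-1}$ (with $u,v\in X^+$) is already a semigroup derivation diagram, then slice it into a regular derivation sequence (this last step is exactly the paper's Theorem~\ref{Remmers thm}).

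The one place where your proposal is a plan rather than a proof is the ``combing lemma,'' and you say so yourself. Two points there deserve to be made precise. First, the edges of $\Delta$ already carry orientations (every relator is $rs^{-1}$ with $r,s$ positive), so nothing needs to be ``oriented consistently''; what must be shown is that the given orientation has no \emph{interior sources or sinks}, which is the actual hypothesis of Remmers' slicing theorem and of the paper's definition of an $S$-diagram. Second, your argument at an interior source produces only a \emph{closed walk} in $LG\langle X|R\rangle$ (one $LG$-edge per cell around the vertex, consecutive cells sharing the first letter of a side). In a forest a closed walk need not be a cycle --- it can backtrack --- so cycle-freeness alone does not finish the argument. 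The missing step is to show that a backtrack, i.e.\ two consecutive cells around the source corresponding to the same relation and sharing the initial edge of corresponding sides, is precisely a cancellable pair of 2-cells, contradicting reducedness of $\Delta$; you name reducedness as the relevant ingredient but do not carry out this identification. (Similarly, absence of directed cycles in the 1-skeleton, which your slicing step tacitly uses, has to be derived from the no-source/no-sink statement by an innermost-cycle argument.) With those two verifications supplied, your outline becomes Remmers' proof.
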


From the embedding in Theorem \ref{embedding}, we can derive the fact that every Adian semigroup embeds in an Adian inverse semigroup, as proved in the following proposition.
 \begin{proposition} \label{embedding 1}
 An Adian semigroup $S=Sg\langle X|R\rangle $ embeds in the Adian inverse semigroup $M=Inv\langle X|R\rangle $.
 \end{proposition}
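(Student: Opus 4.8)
The plan is to produce the embedding by composing two natural maps and showing the composite is injective. There is an obvious semigroup homomorphism $\iota\colon S=Sg\langle X|R\rangle \to M=Inv\langle X|R\rangle$ induced by the identity on $X$, since every defining relation $(u,v)\in R$ holds in $M$; and there is the minimal group congruence map $\sigma\colon M\to Gp\langle X|R\rangle$ described in the introduction. First I would observe that the composite $\sigma\circ\iota\colon S\to Gp\langle X|R\rangle$ is exactly the canonical map from the Adian semigroup to the Adian group, because all three presentations share the same $X$ and $R$ and the maps are the ones induced by $X\to X$. By Theorem~\ref{embedding}, this composite is injective. Since a composite of maps is injective only if the first map is injective, $\iota$ itself is injective, and hence $S$ embeds in $M$ via $\iota$.

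In more detail, I would argue as follows. Let $u,v\in X^+$ with $\iota(u)=\iota(v)$, i.e.\ $u\underset{M}{=}v$. Applying $\sigma$ gives $u\underset{Gp\langle X|R\rangle}{=}v$. Now I must identify $\sigma\circ\iota$ with the natural homomorphism $\pi\colon S\to Gp\langle X|R\rangle$ of Theorem~\ref{embedding}. This is a diagram-chase on presentations: $\pi$ is the unique semigroup homomorphism sending each generator $x$ to its image in the group, and $\sigma\circ\iota$ does the same, so they agree on the generating set $X$ and therefore on all of $S=X^+/{\underset{S}{=}}$. Theorem~\ref{embedding} says $\pi$ is injective, so $u\underset{S}{=}v$. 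Thus $\iota$ is injective, which is exactly the assertion that $S$ embeds in $M$.

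The one point that deserves care — and I expect it to be the only real obstacle — is making sure the relevant maps are genuinely well defined and that the identification $\sigma\circ\iota=\pi$ is legitimate. Well-definedness of $\iota$ is immediate: the relations $R\subseteq X^+\times X^+$ defining $S$ are among the relations imposed on $M$, so the assignment $x\mapsto x$ respects them. Well-definedness of $\sigma$ and the fact that $M/\sigma\cong Gp\langle X|R\rangle$ were recalled in the introduction (with reference to \cite{IS}). The identification of the composite with $\pi$ is then forced by the universal property of the semigroup presentation $Sg\langle X|R\rangle$: any two semigroup homomorphisms out of it that agree on $X$ coincide. Once this is in place, injectivity of $\iota$ follows formally, and no further combinatorial input (e.g.\ about left/right graphs or cycle-freeness) is needed beyond what Theorem~\ref{embedding} already supplies.

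It is worth remarking that this argument uses the Adian hypothesis only through Theorem~\ref{embedding}; the purely categorical part is valid for any positive presentation. So the proposition is really a soft corollary of Remmers' theorem together with the standard fact that the maximal group image of $Inv\langle X|R\rangle$ is $Gp\langle X|R\rangle$.
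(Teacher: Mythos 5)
Your argument is correct and is essentially identical to the paper's own proof: both factor the canonical map $S\to Gp\langle X|R\rangle$ as $\sigma$ composed with the natural map $S\to M$, invoke Theorem~\ref{embedding} for injectivity of the composite, and conclude injectivity of the first factor. Your version simply spells out the well-definedness and the identification of the composite with the natural homomorphism in more detail.
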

 \begin{proof}
 Let $\theta :S \to M$ be the natural homomorphism and $\phi : S=\langle X|R\rangle \to G=Gp\langle X|R\rangle$ be the natural homomorphism. $\phi$ is an injective homomorphism  by \ref{embedding}. Note that $\phi=\sigma \circ \theta$. Since $\phi=\sigma \circ \theta$ and $\phi$ is injective, then $\theta$ must be injective.

 \end{proof}

 \begin{conjecture}
 (Adian, 1976) The word problem for Adian semigroups is decidable.
 \end{conjecture}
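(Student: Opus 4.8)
The plan is to attack the conjecture through the inverse semigroup $M=Inv\langle X|R\rangle$ rather than through $S=Sg\langle X|R\rangle$ directly. By Proposition \ref{embedding 1}, $S$ embeds in $M$, so it suffices to decide the word problem of $M$ (restricted, if one likes, to images of positive words). By Theorem \ref{Stephen's thm}(2), for $u,v\in(X\cup X^{-1})^{*}$ one has $u\underset{M}{=}v$ if and only if the birooted Sch\"utzenberger automata of $u$ and $v$ are isomorphic, and by Theorem \ref{closure} these automata are produced from the linear graphs of $u$ and $v$ by Stephen's iterative procedure of $\mathscr{P}$-expansions followed by foldings. Since folding is confluent and strictly decreases the number of vertices, the only thing standing between us and an algorithm is \emph{termination}: the word problem of $M$, and hence of $S$, is decidable as soon as every Sch\"utzenberger graph of $M$ is finite and effectively computable.

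Next I would reduce the finiteness question for arbitrary words to the finiteness question for positive words. This is exactly the content of the paper's main theorem (stated in the abstract): all Sch\"utzenberger complexes of an Adian inverse semigroup are finite provided the Sch\"utzenberger complex $SC(w)$ is finite for every $w\in X^{+}$. Granting this, the conjecture becomes equivalent to the assertion that for every Adian presentation $\langle X|R\rangle$ and every positive word $w\in X^{+}$, the complex $SC(w)$ is finite, with the number of vertices bounded effectively in terms of $|w|$ and the presentation. So all the remaining work is concentrated in one place.

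The heart of the argument — and the main obstacle — is this last bound. Here I would import Remmers' geometric machinery for Adian presentations: a run of the $\mathscr{P}$-expansion procedure on the linear graph of $w$ can be read as the construction of a van Kampen-type diagram over $\langle X|R\rangle$ whose $2$-cells are the faces of the approximate complex, and the cycle-free hypothesis on $LG\langle X|R\rangle$ and $RG\langle X|R\rangle$ forces strong structural constraints on reduced diagrams of this kind — in particular no ``positive cycles,'' controlled vertex degrees, and a coarsely tree-like shape. The plan is to convert these constraints into a uniform bound on the number of vertices of the closed graph, by showing that each elementary expansion performed on a positive subpath either merely completes a relation already present in the limit (so contributes nothing new) or attaches a path that, by the Adian condition, cannot feed back on itself to spawn an unbounded chain of further expansions.

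I expect the genuine difficulty to be precisely the control of that chain. An expansion can create a new location at which some relation $r=s\in R$ becomes readable, which triggers a further expansion, which creates another, and so on; ruling out an infinite such cascade for \emph{all} Adian presentations appears to need more than cycle-freeness of the left and right graphs alone. This is why the present paper establishes positive-word finiteness — and hence decidability of the word problem — only for certain subclasses of Adian presentations, where extra hypotheses on the relators (length restrictions or small-overlap type conditions) tame the cascade; a proof of the full conjecture would require a presentation-independent substitute for those hypotheses.
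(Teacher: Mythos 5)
This item is a conjecture, not a theorem: the paper records Adian's 1976 conjecture and does not prove it, settling it only for special subclasses of Adian presentations. Your outline faithfully reproduces the paper's program --- embed $Sg\langle X|R\rangle$ into $Inv\langle X|R\rangle$ via Proposition \ref{embedding 1}, use Stephen's procedure together with Theorem \ref{Stephen's thm} to reduce decidability to finiteness (hence effective computability) of Sch\"{u}tzenberger complexes, and invoke the main theorem to reduce that finiteness question to positive words. Up to that point you and the paper are in complete agreement, and that part of your reduction is sound.

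The gap is exactly the one you flag in your final paragraph, and it is not a removable technicality: neither you nor the paper gives an argument that $SC(w)$ is finite for every positive word $w$ over an \emph{arbitrary} Adian presentation, and no such argument is known. Remmers' machinery tells you that a reduced $S$-diagram over an Adian presentation has no interior sources or sinks and no directed cycles, but that controls a single diagram witnessing one equality $u=v$; it does not bound the number of elementary $\mathscr{P}$-expansions in Stephen's closure, which is where an infinite cascade can occur. It is not even known that all Sch\"{u}tzenberger graphs of positive words are finite for general Adian presentations; if some are infinite, this entire route collapses and decidability would have to be obtained by other means (for instance by showing the graphs have decidable membership without being finite). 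So what you have written is a correct account of the reduction and an accurate diagnosis of the obstruction, but it is not a proof: the conjecture remains open, and the paper only establishes positive-word finiteness (hence the word problem) for the condition-$(\star)$ presentations with forest-shaped bi-sided graph and for the presentations $\langle a,b \mid ab^m=b^na\rangle$.
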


 \begin{remark}
 The word problem for one relation Adian semigroups is decidable. This is because Magnus \cite{Magnus} proved that the word problem for one relator groups is decidable and by Theorem \ref{embedding}, a one relation Adian semigroup embeds in the corresponding one relator Adian group.
 \end{remark}

 \begin{proposition}\label{WP}
 The word problem for an Adian semigroup $S=Sg\langle X|R\rangle $ and an Adian group $G=Gp\langle X|R\rangle $ is decidable, if:\\
 1. the Adian inverse semigroup $M=Inv\langle X|R \rangle$ is $E-$unitary and \\
 2. the word problem for the Adian inverse semigroup $M$ is decidable.
 \end{proposition}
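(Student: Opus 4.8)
The plan is to reduce both word problems to the word problem for the inverse semigroup $M$, which is decidable by hypothesis~(2).

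For the semigroup $S$ there is almost nothing to do: by Proposition~\ref{embedding 1} the natural homomorphism $\theta\colon S\to M$ is injective, so for positive words $u,v\in X^{+}$ we have $u\underset{S}{=}v$ if and only if $\theta(u)=\theta(v)$ in $M$, i.e.\ if and only if $u\underset{M}{=}v$ when $u,v$ are regarded as elements of $M$. Hence any algorithm deciding equality in $M$ also decides equality in $S$. I would point out that this half uses only hypothesis~(2); $E$-unitarity is not needed.

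For the group $G$, recall that $\sigma\colon M\to G$ realizes $G=M/\sigma$ as the maximal group homomorphic image of $M$, with $\sigma$ sending each generator of $M$ to the corresponding generator of $G$. Thus for $u,v\in(X\cup X^{-1})^{*}$ one has $u\underset{G}{=}v$ if and only if $\sigma(u)\sigma(v)^{-1}=1$, equivalently $\sigma(uv^{-1})=1$, equivalently the element of $M$ represented by the word $uv^{-1}$ lies in $\sigma^{-1}(1)$. This is where hypothesis~(1) enters: since $M$ is $E$-unitary, $\sigma^{-1}(1)=\{e\in M:e^{2}=e\}$, so $u\underset{G}{=}v$ if and only if $uv^{-1}$ represents an idempotent of $M$. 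Finally, an element $m\in M$ is idempotent precisely when $m^{2}=m$ (equivalently $m=mm^{-1}$), so deciding $u\underset{G}{=}v$ amounts to testing whether $uv^{-1}uv^{-1}\underset{M}{=}uv^{-1}$, a single instance of the word problem for $M$, decidable by~(2).

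I do not expect a genuine obstacle: the argument is a short chain of reductions whose only non-formal ingredient, the equality $\sigma^{-1}(1)=\{e\in M:e^{2}=e\}$, is precisely the definition of $E$-unitary recalled in the introduction. The sole point requiring a little care is the bookkeeping that lets one word over $X\cup X^{-1}$ simultaneously name an element of $M$ and of $G$, together with a trivial separate treatment of the cases in which $u$ or $v$ is the empty word (if $M$ is not a monoid).
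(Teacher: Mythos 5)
Your proposal is correct and follows essentially the same route as the paper: the semigroup case is handled via the embedding of Proposition~\ref{embedding 1}, and the group case by using $E$-unitarity to identify $\sigma^{-1}(1)$ with the idempotents and then testing $uv^{-1}uv^{-1}\underset{M}{=}uv^{-1}$, which is exactly the paper's reduction (stated there for $u\underset{G}{=}1$). Your version merely spells out the bookkeeping in a bit more detail.
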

 \begin{proof}
 It immediately follows from (1) that, for any word $u\in (X\cup X^{-1})^*$,  $u\underset{G}{=}1$ if and only if $u$ is an idempotent in $M$. So, if (2) holds, then we can check whether $u$ is an idempotent or not by checking the equality of words $u\underset{M}{=}u^2$.

 If (2) holds, then by Proposition \ref{embedding 1}, $S$ embeds in $M$ and so the word problem for $S$ is also decidable.

 \end{proof}

The following theorem proved in \cite{Eu} establishes the first part of the Proposition \ref{WP}.

\begin{theorem}\label{E-unitary} (Muhammad Inam, John Meakin, Robert Ruyle)
Adian inverse semigroups are $E-$unitary.
\end{theorem}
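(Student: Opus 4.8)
The plan is to reformulate $E$-unitarity in terms of Schützenberger graphs and then exploit the acyclicity of the left and right graphs through a diagram argument in the spirit of Remmers. Recall that $M=Inv\langle X|R\rangle$ is $E$-unitary precisely when, for every $w\in(X\cup X^{-1})^*$ with $\sigma(w)=1$ in $G=Gp\langle X|R\rangle$, the word $w$ is an idempotent of $M$; by Theorem \ref{Stephen's thm} this holds if and only if in the Schützenberger automaton $(ww^{-1},S\Gamma(M,X,w),w)$ the initial and terminal vertices coincide. Writing $\pi\colon S\Gamma(M,X,w)\to\Gamma(G,X)$ for the natural label-preserving graph morphism (which sends a vertex $m$ to $\sigma(m)$ and an edge $(m,x,mx)$ to $(\sigma(m),x,\sigma(m)x)$), we have $\pi(ww^{-1})=\pi(w)=1$, so it suffices to prove the stronger statement that $\pi$ is injective on vertices for every Schützenberger graph of $M$. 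A short argument using the natural partial order shows that this injectivity is in fact equivalent to $E$-unitarity, and by Meakin's theorem quoted in the introduction it is the same as each Schützenberger graph embedding in the Cayley graph of $M$.

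To prove that $\pi$ is injective on vertices I would argue by contradiction: suppose $m_1\neq m_2$ are vertices of some $S\Gamma(M,X,w)$ with $\sigma(m_1)=\sigma(m_2)$, and among all such configurations choose one minimizing an appropriate complexity measure (e.g.\ the number of cells of a van Kampen diagram witnessing the relation $m_1^{-1}m_2=1$ in $G$). Build $S\Gamma(M,X,w)$ by Stephen's iterative procedure as a direct limit of approximate graphs obtained from the linear graph of $w$ by $\mathscr{P}$-expansions (Theorem \ref{closure}), and carry along the corresponding approximate Schützenberger complexes, so that $S\Gamma(M,X,w)$ is realized as the $1$-skeleton of its Schützenberger complex $SC(w)$. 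Since $S\Gamma(M,X,w)$ is strongly connected there is a path $p$ from $m_1$ to $m_2$ reading a word $z$; then $z=1$ in $G$, so $z$ bounds a van Kampen diagram $D$ over the group presentation. Gluing $D$ onto $SC(w)$ along $p$ produces a diagram in which $m_1$ and $m_2$ would be forced to be identified, and the goal becomes to show that a reduced diagram of this shape over an Adian presentation cannot exist unless $m_1=m_2$.

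The heart of the matter — and the step I expect to be the main obstacle — is the combinatorial analysis of this glued diagram under the hypothesis that $LG\langle X|R\rangle$ and $RG\langle X|R\rangle$ are forests. Following Remmers' treatment of Adian semigroup diagrams (\cite{RM}), each $2$-cell of $SC(w)$ and of $D$ is bounded by two positive paths labeled by the two sides of a relation of $R$, and at an interior vertex the incoming positive edges are governed by the right graph and the outgoing positive edges by the left graph. One shows that the positive edges emanating from (resp.\ pointing into) a vertex, together with the cells between consecutive such edges, assemble into a tree-shaped pattern whose branching is dictated by $LG\langle X|R\rangle$ (resp.\ $RG\langle X|R\rangle$); a genuine identification of two distinct vertices would close one of these patterns into a cycle and hence yield a cycle in the left or in the right graph, contradicting acyclicity. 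Concretely, I would first prove the structural lemma for Schützenberger complexes of positive words $u\in X^+$, where only the positive geometry is present and the left/right–graph obstruction is cleanest, and then bootstrap to arbitrary $w$ by decomposing the path $z$ and the diagram $D$ into maximal positive subpieces.

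Assembling these pieces, the structural lemma forces $\pi$ to be injective on the vertices of every Schützenberger graph, hence $M$ is $E$-unitary. The delicate points to get right are: (i) pinning down the correct notion of a \emph{reduced} glued diagram and the minimality choice that makes the induction close; (ii) verifying that the folding steps inside the $\mathscr{P}$-expansion procedure never create the forbidden positive configurations — this is exactly where one uses that the presentation is positive and that both associated graphs are acyclic; and (iii) reducing the mixed (non-positive) case to the positive one without losing control of the diagram's complexity.
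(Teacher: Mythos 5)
First, a point of orientation: the paper you are reading does not prove this theorem at all --- it is imported wholesale from the reference \cite{Eu} (Inam--Meakin--Ruyle, \emph{A structural property of Adian inverse semigroups}), and the only trace of its proof here is the citation of ``Lemma 2(ii) of \cite{Eu}'', which says that van Kampen diagrams over Adian presentations contain no directed cycles. Your overall instinct --- van Kampen diagrams plus Remmers-style structure theory for Adian diagrams, with the left and right graphs controlling branching at vertices --- is the right family of ideas, and is indeed what \cite{Eu} uses. The cited proof, however, works directly with a van Kampen diagram $D$ for a word $w$ with $w\underset{G}{=}1$ and shows by induction (peeling boundary cells off $D$, which is possible precisely because Adian diagrams have no interior sources or sinks and no directed cycles) that the boundary label of such a diagram is an idempotent of $M$. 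Your route through injectivity of $\pi\colon S\Gamma(M,X,w)\to\Gamma(G,X)$ is a legitimate equivalent formulation, but it adds a layer (controlling all of $S\Gamma(w)$, built by Stephen's procedure) on top of the part that actually carries the difficulty.

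The genuine gap is that the step you yourself flag as ``the heart of the matter'' is not an obstacle you have reduced to something checkable --- it \emph{is} the theorem. Two distinct vertices $m_1\neq m_2$ of $S\Gamma(w)$ with $\sigma(m_1)=\sigma(m_2)$ are not vertices that ``get identified by folding''; folding is forced and confluent, and the failure of $E$-unitarity would manifest precisely as two vertices that never fold together yet have the same group image. So the assertion that such a configuration ``would close one of these patterns into a cycle and hence yield a cycle in the left or in the right graph'' does not follow from the Remmers structure theory you quote; it is exactly the statement to be proved. Moreover, gluing the van Kampen diagram $D$ onto $SC(w)$ along the path $p$ proves nothing in $M$ unless you also show that every $2$-cell of $D$ can be realized by a legitimate elementary $\mathscr{P}$-expansion at some stage of Stephen's procedure, i.e.\ that one side of each relation is already readable between the correct vertices when that cell is reached. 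That is where the no-interior-source/sink and no-directed-cycle properties of Adian diagrams must be deployed, via an explicit ordering of the cells of $D$ (peeling from the boundary), and it is entirely absent from your sketch. Until that absorption argument is written down --- together with the verification that it does not circularly rely on Lemmas of the present paper (such as Lemma \ref{no folding}) whose proofs already assume $E$-unitarity through Lemma \ref{no cycles 1} --- the proposal is a plausible program rather than a proof.
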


In order to solve the word problem for the Adian inverse semigroups we prove the following theorem which enables us to solve the word problem for some classes of Adian inverse semigroups.

\begin{theorem}\label{finite complexes}
Let $M=Inv\langle X|R\rangle$ be a finitely presented Adian inverse semigroup. Then the Sch\"{u}tzenberger complex of $w$ is finite for all words $w\in (X\cup X^{-1})^*$ if and only if  the Sch\"{u}tzenberger complex of $w'$ is finite for all positive words $w'\in X^+$.
\end{theorem}

\begin{remark}
We observe that if the word problem for a finitely presented Adian Inverse semigroup $Inv\langle X|R\rangle$ is decidable then the word problem for the corresponding Adian semigroup $Sg\langle X|R\rangle $  and the corresponding Adian group $Gp\langle X|R\rangle$ is also decidable. Because, if the word problem for an Adian inverse semigroup $Inv\langle X|R\rangle$ is decidable, then the corresponding Adian semigroup embeds into the Adian inverse semigroup by Proposition \ref{embedding 1}. So the decidability of the word problem for Adian semigroup follows from the decidability of the word problem of Adian inverse semigroup.  We know by Theorem \ref{E-unitary} that Adian inverse semigroups are $E$-unitary. So, if we want to check whether an element $w\in (X\cup X^{-1})^*$ is equal to the identity element of an Adian group $G=Gp\langle X|R\rangle$ in $G$, then we just need to check that whether $w$ is an idempotent in the corresponding Adian inverse semigroup $Inv\langle X|R\rangle$. Which can be checked immediately if the word problem for the Adian inverse semigroup is decidable.
\end{remark}

\section{Main Theorem}

 Let $Inv\langle X|R\rangle$ be an inverse semigroup. Then for any word $w\in (X\cup X^{-1})^*$, the sequence of approximate graphs $\{(\alpha_n,\Gamma_n(w),\beta_n)|n\in\mathbb{N}\}$ obtained by full $\mathscr{P}$-expansion over the presentation $\langle X|R\rangle$, converges to the Sch\"{u}tzenberger graph of $w$ over the presentation $\langle X|R\rangle$.  There exist graph homomorphisms, $\psi_n: \Gamma_n(w)\to \Gamma_{n+1}(w)$,   such that $\psi_n(\alpha_n)=\alpha_{n+1}$ and $\psi_n(\beta_n)=\beta_{n+1}$, for all $n\in \mathbb{N}$. If we attach to $\Gamma_n(w)$ 2-cells corresponding to the relations in the obvious way, we obtain an approximate complex of $SC(w)$. We use the same notation, so that  $\{(\alpha_n,\Gamma_n(w),\beta_n)|n\in\mathbb{N}\}$ becomes a sequence of approximate complexes that converges to $SC(w)$. We call a 2-cell to be an \textit{$n$-th generation 2-cell} if it occurs in $(\alpha_n,\Gamma_n(w),\beta_n)\setminus (\psi_{n-1}(\alpha_{n-1}),\psi_{n-1}(\Gamma_{n-1}(w)),\psi_{n-1}(\beta_{n-1}))$, for all $n\in \mathbb{N}$. The following lemma is due to Steinberg, who gives two slightly deferent proofs, in \cite{SB} and \cite{SB1}. The proof we present here is very similar to the proof in \cite{SB1}.

 \begin{lemma}\label{simply connected}
 Let $M=Inv\langle X|R\rangle$ be an inverse semigroup and $w\in (X\cup X^{-1})^*$. Then the Sch\"{u}tzenberger complex of $w$, $SC(w)$, is simply connected.

  \end{lemma}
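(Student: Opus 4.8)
The plan is to build $SC(w)$ as the direct limit of the approximate complexes $\{(\alpha_n,\Gamma_n(w),\beta_n)\}$ and show that simple connectivity is preserved at each stage and under the limit. First I would check the base case: the linear graph $\Gamma_0(w)=\Gamma_w$, together with whatever 2-cells the initial graph supports, is a tree with possibly some 2-cells attached along pairs of paths that are already equal in the tree; since a tree is simply connected and attaching 2-cells along null-homotopic boundary loops does not change the fundamental group, $(\alpha_0,\Gamma_0(w),\beta_0)$ is simply connected. (If one prefers to start the induction from the linear graph with no faces, that is literally a tree and hence contractible.)

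Next I would carry out the inductive step: assuming $(\alpha_{n},\Gamma_{n}(w),\beta_{n})$ is simply connected, show the same for $(\alpha_{n+1},\Gamma_{n+1}(w),\beta_{n+1})$. There are two moves to analyze. An elementary $\mathscr{P}$-expansion glues a linear graph $\Gamma_s$ (a tree) to $\Gamma_n(w)$ along its two endpoints $\alpha_s\mapsto v_1$, $\beta_s\mapsto v_2$ and simultaneously sews on the 2-cell bounded by the $r$-path (already present from $v_1$ to $v_2$) and the new $s$-path. The new subcomplex being attached is a disk (a tree plus one 2-cell whose boundary traverses the tree and then the old $r$-path), so by van Kampen the result is simply connected provided the old $r$-path is a loop based appropriately — and indeed the 2-cell kills exactly the loop created by the new arc. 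A folding identifies two edges $(v,x,v_1)$ and $(v,x,v_2)$; this is a quotient that, as Stephen shows, is confluent, and topologically it is the collapse of a loop $x\bar x$-type bigon or, more generally, a quotient that can be realized by attaching a 2-cell (or by an elementary collapse), hence does not create new $\pi_1$. I would phrase the folding step via the standard fact that identifying two edges with a common initial vertex and the same label in an inverse word graph corresponds to an expansion/collapse that does not affect simple connectivity, citing the confluence of folding from \cite{SG}; alternatively, fold before adding faces and observe the underlying graph stays a forest-with-identifications whose $\pi_1$ is then killed by the faces.

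Finally I would pass to the limit: $SC(w)=\varinjlim (\alpha_n,\Gamma_n(w),\beta_n)$ via the maps $\psi_n$, each $\psi_n$ a cellular map, and invoke the fact that $\pi_1$ commutes with direct limits of spaces (any loop or null-homotopy in $SC(w)$ is compact, hence factors through some finite stage $\Gamma_n(w)$, which is simply connected). Therefore $SC(w)$ is simply connected. The main obstacle I anticipate is the bookkeeping in the folding step: one must be careful that when a determination identifies the entire boundary of two 2-cells they are identified as a single 2-cell (as stipulated in the definition of approximate complex), and that partial identifications of 2-cell boundaries — which genuinely occur — still yield a complex with trivial $\pi_1$; the cleanest way around this is to argue that each full $\mathscr{P}$-expansion is a composite of elementary expansions followed by foldings, handle the elementary expansion by van Kampen as above, and handle each elementary folding as a deformation retraction (or as attaching a disk along a null-homotopic loop), so that simple connectivity is an invariant throughout, and then take the colimit.
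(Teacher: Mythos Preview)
Your proposal is correct and follows essentially the same approach as the paper: induction on the sequence of approximate complexes, with the base case a tree and the inductive step treating each elementary $\mathscr{P}$-expansion as the attachment of a disk (the new $s$-arc together with the 2-cell bounded by $rs^{-1}$) along the existing $r$-path. The paper's version is terser---it phrases the inductive step simply as ``attaching a simply connected space along a continuous path'' and does not explicitly discuss folding or the passage to the direct limit, both of which you treat with more care than the paper does.
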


 \begin{proof}
 We use induction on the sequence of finite approximate complexes obtained by full $\mathscr{P}$-expansion of $w$. Since $(\alpha_0,\Gamma_0(w),\beta_0)$ is just a tree, it is simply connected.

We assume that the the finite approximate complex $(\alpha_{k-1},\Gamma_{k-1}(w),\beta_{k-1})$ is simply connected and show that $(\alpha_{k},\Gamma_{k}(w),\beta_{k})$ is simply connected.

Each time we perform an elementary $\mathscr{P}$-expansion on $(\alpha_{k-1},\Gamma_{k-1}(w),\beta_{k-1} )$ we sew on a relation $(u,v)\in R$ along with a 2-cell bounded by the path labeled by $uv^{-1}$. We are exactly attaching a simply connected space along a continuous path, the result of which is again simply connected. Thus by induction, all complexes in the sequence of approximate complexes $\{(\alpha_n, \Gamma_n(w),\\ \beta_n):n\in \mathbb{N}\}$  are simply connected. It follows that the limit of $SC(w)$ of this sequence is simply connected.

\end{proof}

\begin{lemma} \label{no cycles 1}
Let $Inv\langle X|R\rangle$ be an Adian inverse semigroup and $w\in (X\cup X^{-1})^*$. Then the Sch\"{u}tzenberger complex of $w$ contains no directed cycles of 1-cells.
\end{lemma}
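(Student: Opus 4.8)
The plan is to argue by induction on the sequence of finite approximate complexes $\{(\alpha_n,\Gamma_n(w),\beta_n):n\in\mathbb{N}\}$ obtained from the linear graph of $w$ by full $\mathscr{P}$-expansions, exactly as in the proof of Lemma \ref{simply connected}. The base case is immediate: $\Gamma_0(w)$ is the linear graph of $w$, which is a tree, so it contains no cycle of $1$-cells at all, directed or not. So assume $\Gamma_{k-1}(w)$ has no directed cycle of $1$-cells and consider the full $\mathscr{P}$-expansion $\Gamma_{k-1}(w)\Rightarrow_f\Gamma_k(w)$. I would split this into two stages: the graph $\Gamma'$ obtained from $\Gamma_{k-1}(w)$ by sewing on all the new linear segments (one for each elementary $\mathscr{P}$-expansion applicable relative to $\Gamma_{k-1}(w)$), and then the folding of $\Gamma'$ down to $\Gamma_k(w)$.

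For the first stage I would show directly that $\Gamma'$ has no directed cycle of $1$-cells, and this step does not use the Adian hypothesis. Each new segment is a linear graph for a positive word $s$, sewn from a vertex $v_1$ to a vertex $v_2$ of $\Gamma_{k-1}(w)$ to complete a relation $r=s$ for which $r$ already reads $v_1\to v_2$ in $\Gamma_{k-1}(w)$; since both sides of every relation lie in $X^+$, every edge of a new segment is a positive edge pointing ``forward'' from $v_1$ towards $v_2$, and every interior vertex of a new segment is brand new and meets no other edges. Consequently any directed walk that uses a new edge must enter a segment at its $v_1$-end and run all the way through to its $v_2$-end (possibly continuing through a chain of segments sharing endpoints). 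Given a directed cycle in $\Gamma'$, I would replace each maximal sub-walk lying in the new segments by the corresponding directed path labeled by the words $r$ inside $\Gamma_{k-1}(w)$; the result is a directed cycle inside $\Gamma_{k-1}(w)$, contradicting the inductive hypothesis. Hence $\Gamma'$ has no directed cycle of $1$-cells.

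The second stage --- showing that the folding of $\Gamma'$ to $\Gamma_k(w)$ does not create a directed cycle --- is where the Adian hypothesis is essential, and I expect it to be the main obstacle, since folding an arbitrary inverse word graph with no directed cycles can create one. Here I would use the acyclicity of the left graph $LG\langle X|R\rangle$ and the right graph $RG\langle X|R\rangle$. As a first consequence, for every relation $r=s$ in $R$ we have $\mathrm{prefix}(r)\neq\mathrm{prefix}(s)$ and $\mathrm{suffix}(r)\neq\mathrm{suffix}(s)$ (otherwise $LG$ or $RG$ would contain a loop), so a new segment sewn from $v_1$ to $v_2$ never conflicts, at its endpoints, with the parallel $r$-path; since $\Gamma_{k-1}(w)$ is already deterministic, the only foldings triggered in $\Gamma'$ originate at the attaching vertices of the new segments. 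When such a folding propagates, it identifies a prefix of a new segment with an already-present directed path reading the same positive word, and the heart of the argument is to show, by tracking these identifications through $LG$ and $RG$, that they can neither close a segment into a loop nor combine with the directed paths of $\Gamma_{k-1}(w)$ to produce a directed cycle --- in effect, acyclicity of $LG$ and $RG$ bounds how far folding can cascade. I anticipate that making this precise requires an auxiliary combinatorial lemma describing exactly which vertices become identified during a full $\mathscr{P}$-expansion over an Adian presentation and how the corresponding prefixes (resp.\ suffixes) are related in $LG$ (resp.\ $RG$). Once this is in place, the inductive hypothesis is restored for $\Gamma_k(w)$, and since $SC(w)$ is the direct limit of the $\Gamma_n(w)$ and a directed cycle of $1$-cells is a finite subcomplex, $SC(w)$ has no directed cycle of $1$-cells.

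As a consistency check, note that since $M$ is $E$-unitary by Theorem \ref{E-unitary}, $S\Gamma(w)$ embeds in the Cayley graph of $M$ and hence maps to the Cayley graph of $G=Gp\langle X|R\rangle$; a directed cycle of $1$-cells labeled by a nonempty $z\in X^+$ at a vertex $m$ would force $mz\underset{M}{=}m$, hence $z\underset{G}{=}1$, which cannot happen in an Adian group (the Adian semigroup $Sg\langle X|R\rangle$ embeds in $G$ by Theorem \ref{embedding} and is therefore cancellative, so no nonempty positive word can act as an identity). I would nonetheless carry out the inductive argument above, as it stays within the combinatorial framework already developed and makes the role of the Adian hypothesis explicit through $LG$ and $RG$.
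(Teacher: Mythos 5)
Your primary route---induction along the sequence of full $\mathscr{P}$-expansions---is not actually carried out: the folding stage, which you yourself identify as ``the main obstacle,'' is deferred to an unstated ``auxiliary combinatorial lemma describing exactly which vertices become identified'' during a full $\mathscr{P}$-expansion. That lemma is the entire content of the problem; nothing in your sketch supplies it. (The paper does develop machinery of exactly this flavour, in Lemmas \ref{path} and \ref{no folding}, but only for positive words $w\in X^+$, and the proof of Lemma \ref{path} itself invokes Lemma \ref{no cycles 1}; so this route, as sketched, risks circularity and is in any case incomplete.)

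Your closing ``consistency check'' is in fact the paper's actual proof: $M$ is $E$-unitary by Theorem \ref{E-unitary}, so $SC(w)$ embeds in the Cayley complex of $G=Gp\langle X|R\rangle$, and a directed cycle would yield a nonempty positive word $z$ with $z\underset{G}{=}1$. But your justification for why this is impossible---``$Sg\langle X|R\rangle$ embeds in $G$ and is therefore cancellative, so no nonempty positive word can act as an identity''---is a non sequitur. Cancellativity does not exclude an identity element: from $z\underset{G}{=}1$ and the embedding $S=Sg\langle X|R\rangle\hookrightarrow G$ one gets $z^2\underset{S}{=}z$, hence (using cancellativity) that $z$ is a two-sided identity for $S$, and nothing said so far forbids an Adian semigroup from being a monoid whose identity is represented by a nonempty positive word. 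Ruling this out is precisely the nontrivial point: the paper does it by citing Lemma 2(ii) of \cite{Eu}, which asserts that Van Kampen diagrams over Adian presentations contain no directed cycles (a diagram witnessing $z\underset{G}{=}1$ would have the directed cycle labeled $z$ as its boundary); equivalently, the absence of idempotents in $S$ is only derived later in the paper as a corollary of Lemma \ref{path}. So both of your routes are missing their key step; the shortest repair is to keep the $E$-unitary embedding argument and replace the appeal to cancellativity by the cited result on directed cycles in Van Kampen diagrams over Adian presentations.
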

\begin{proof}
Meakin showed that, if $M=Inv\langle X|R\rangle $, is an $E$-unitary inverse semigroup, then for all words $u\in (X\cup X^{-1})^*$, the Sch\"{u}tzenberger graph of the word $u$ embeds into the Cayley graph of the maximal group homomorphic image of $M$,  $Gp\langle X|R\rangle$. Likewise, the Sch\"{u}tzenberger complex of $u$ embeds in the Cayley complex of the group $Gp\langle X|R\rangle$.  It has been proved in \cite{Eu} that Adian inverse semigroups are $E$-unitary. So $SC(w)$ embeds into the Cayley complex of the group $Gp\langle X|R\rangle$.

If $SC(w)$ contains a directed cycle then the Cayley complex of $Gp\langle X|R\rangle$ contains a  directed cycle as well.  We assume that that this directed cycle is labeled by a word $x$ for some $x\in X^+$. Then $x\underset{G}{=}1$ so there  exists a Van Kampen diagram with boundary labeled by $x$. But this contradicts Lemma 2(ii) of \cite{Eu} which shows that a Van Kampen diagram over an Adian presentation contains no directed cycles.
\end{proof}

The set of all edges (1-cells) of a graph (complex) whose tail vertex lies on a vertex labeled by $v$ is denoted $Star^o(v)$ and the set of all edges(1-cells) whose tip lies at a vertex $v$ is denoted by $Star^i(v)$.

\begin{lemma}\label{path}
 Let $M=Inv\langle X,R\rangle$ be an Adian inverse semigroup, $w\in X^+$, $v$ a vertex (0-cell) of an approximate complex $(\alpha_n, \Gamma_{n}(w),\beta_n)$ and let $a,b\in X$ label two distinct edges of $Star^o_{\Gamma_{n+1}(w)}(\psi_n(v))$. Then there exists a path in $LG\langle X|R\rangle$ connecting $a$ and $b$.
\end{lemma}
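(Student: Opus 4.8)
The plan is to trace where the edges labeled $a$ and $b$ at $\psi_n(v)$ come from, working backward through the single full $\mathscr{P}$-expansion step $(\alpha_n,\Gamma_n(w),\beta_n)\Rightarrow_f(\alpha_{n+1},\Gamma_{n+1}(w),\beta_{n+1})$. Two outgoing edges at $\psi_n(v)$ with the \emph{same} label get folded together, so after determinization an outgoing edge at $\psi_n(v)$ labeled by a given letter is unique; hence $a$ and $b$ being distinct means there are genuinely (at least) two different letters leaving $\psi_n(v)$. Each such outgoing edge either (i) was already present at $v$ in $\Gamma_n(w)$ (pushed forward by $\psi_n$), or (ii) was created during this full $\mathscr{P}$-expansion as the first edge of a linear graph $\Gamma_s$ sewn on for some relation $(r,s)\in R$ with $r$ readable at $v$ and terminating at some vertex $v'$, or as the first edge of $\Gamma_r$ sewn on when $s$ is the readable side, or (iii) it is an edge that ended up incident to $\psi_n(v)$ because determinization identified $v$ with some other vertex $v''$ at which such an edge was present or created. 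I would organize the argument as an induction on $n$, with the base case $n=0$ (where $\Gamma_0(w)$ is the linear graph of the positive word $w$, so every vertex has at most one outgoing edge and the statement is vacuous) and, more importantly, an inner induction on the number of elementary $\mathscr{P}$-expansions and foldings performed to pass from $\Gamma_n(w)$ to $\Gamma_{n+1}(w)$.

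The key structural facts I would invoke are: first, since $w\in X^+$ and $R$ is a positive presentation, a routine induction shows every edge in every approximate complex is labeled by a letter of $X$ (no inverse letters appear) — this is what makes "outgoing edges at a vertex" the right object and connects directly to $LG\langle X|R\rangle$, whose vertices are exactly the letters of $X$. Second, for a relation $(r,s)\in R$, the prefix letters of $r$ and of $s$ are, by definition, joined by an edge in $LG\langle X|R\rangle$. Third — and this is the crucial geometric input — by Lemma \ref{no cycles 1} the complex $SC(w)$, and hence each approximate complex sitting inside it after pushing forward, contains no directed cycles of $1$-cells; I expect to use this (together with $E$-unitarity / the embedding into the Cayley complex) to control what determinization can do, namely to rule out pathological identifications that would otherwise let an outgoing edge "teleport" to $\psi_n(v)$ without a controllable reason.

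Concretely, the heart of the proof is a case analysis on how the edges labeled $a$ and $b$ arrive at $\psi_n(v)$. If both already emanated from $v$ in $\Gamma_n(w)$, I would want an induction hypothesis at level $n$ asserting exactly the conclusion (so the lemma is really proved by simultaneous induction on $n$ for \emph{all} vertices), and then they are connected in $LG\langle X|R\rangle$ by hypothesis. If $a$ is old and $b$ is new, then $b$ is the prefix letter of some side of a relation $(r,s)$ that was sewn on at $v$, which forces the other side of that relation to be readable starting at $v$; its prefix letter $c$ therefore already labels an outgoing edge at $v$, and $(b,c)$ is an $LG$-edge while $(a,c)$ is handled by the level-$n$ hypothesis, so concatenating gives the path from $a$ to $b$. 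If both $a$ and $b$ are new, they are prefix letters of two relations each readable at $v$, and in each case the prefix letter of the readable side gives an old outgoing edge at $v$, reducing to the previous cases. Finally, the folding/identification case: if $\psi_n(v)$ is the image of several vertices of $\Gamma_n(w)$ or of the expanded graph identified together, one observes that vertices get identified only when a common path is read into them (a consequence of confluence of folding together with determinism), so any outgoing edge at $\psi_n(v)$ is traced back to an outgoing edge, at some vertex of $\Gamma_n(w)$ or of a sewn-on linear graph, of one of the two previous types; the acyclicity from Lemma \ref{no cycles 1} guarantees this back-tracing terminates. The main obstacle I anticipate is exactly this last case — making precise the claim that determinization never produces an outgoing edge at a vertex whose letter is not "$LG$-connected" to the letters already there — and I expect the cleanest route is to phrase the whole lemma as an invariant maintained under each elementary folding and each elementary $\mathscr{P}$-expansion separately, rather than reasoning about the composite full $\mathscr{P}$-expansion all at once, using Lemma \ref{no cycles 1} to keep the graphs acyclic at every intermediate stage so that the back-tracing is well-founded.
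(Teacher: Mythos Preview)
Your overall architecture---induction on $n$, with the old/old, old/new, new/new case split according to whether each of the two outgoing edges was already present at $v$ in $\Gamma_n(w)$---matches the paper's proof, and your handling of those three cases is essentially the paper's argument. Two points, however, need correction.

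First, the base case $n=0$ is \emph{not} vacuous. The lemma at $n=0$ concerns $Star^o_{\Gamma_{1}(w)}(\psi_0(v))$, not $Star^o_{\Gamma_0(w)}(v)$; after one full $\mathscr{P}$-expansion a vertex of the linear automaton can certainly acquire several outgoing edges. The paper treats this base case with the same old/new split you describe for the inductive step (using the fact that on a linear graph the only way to gain an outgoing edge at $v$ is from a relation whose readable side begins at $v$).

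Second, and more substantively, your plan for the folding case is pointed at the wrong invariant. You propose to use Lemma~\ref{no cycles 1} (no directed cycles in $SC(w)$) to make a ``back-tracing'' argument terminate. The paper instead shows that this case \emph{cannot occur}: if a path sewn on at some earlier vertex $v'$ were to have its first edge (labeled, say, $d$) fold onto a pre-existing edge at $v'$, then $d$ and the prefix letter $c$ of the readable side at $v'$ would both lie in $Star^o_{\Gamma_k(w)}(v')$, and by the induction hypothesis would be joined by a path in $LG\langle X|R\rangle$; but the relation $(ct,du)\in R$ also gives an $LG$-edge between $c$ and $d$, forcing a closed path in $LG\langle X|R\rangle$ and contradicting the Adian hypothesis. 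Thus it is cycle-freeness of $LG\langle X|R\rangle$, not acyclicity of $SC(w)$, that does the work here. Your back-tracing idea does not obviously yield $LG$-connectedness once the trace lands at a different vertex $v'$, whereas the paper's argument closes the gap by ruling the whole scenario out. (This is also exactly the mechanism behind the subsequent Lemma~\ref{no folding}, which depends on the present lemma.)
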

A dual statement to the above Lemma also holds for $Star^i_{\Gamma_{n+1}(w)}(\psi_n(v))$.
\begin{proof}
We use induction on $n$ to prove the above statement. If $n=0$, then there are the following two cases to consider.

\textit{Case 1.} Let $a\in Star^o_{\Gamma_0(w)}(v)$ and $b\in Star^o_{\Gamma_1(w)}(\psi_0(v))\setminus Star^o_{\psi_0(\Gamma_0(w))} (\psi_0(v\\ ))$. Since $w\in X^+$,  $(\alpha_0,\Gamma_0(w),\beta_0)$ is just a linear automaton with all of its edges directed towards the vertex labeled by $\beta_0$. If there exists an $R$-word  $ar$ (where $a\in X$ and $r\in X^*$) that labels a path of $\Gamma_0(w)$ from the vertex $v$ to a vertex $v'$, we can sew on a new path labeled by the other side of the relation from the vertex $v$ to the vertex $v'$. Note that this is the only way we can add new edges in $Star^o_{\Gamma_0(w)}(v)$, because if we attach a new path to the linear automaton $(\alpha_0,\Gamma_0(w),\beta_0)$ from a vertex $u$ to a vertex $u'$ and some vertex of this new path gets identified with the vertex $v$ of the linear automaton, then there will be a new edge in $Star^o_{\Gamma_0(w)}(v)$. But this is impossible unless $u=v$ because $\langle X|R\rangle$ is an Adian presentation, so the first letters of both the $R$-words in a relation are different from each other.

  As a consequence of attaching a new path starting from the vertex $v$ , $Star^o_{\Gamma_1(w)}(v)$ contains more than one element.  So, we sew on a path labeled by the other side of the relation $bs$ (where $b\in X$ and $s\in X^*$) from the vertex $v$ to the vertex $v'$, for some $(ar,bs)\in R$. So, there exists an edge between $a$ and $b$ in $LG\langle X|R\rangle$.

\textit{Case 2.} Suppose $a,b\in Star^o_{\Gamma_1(w)}(\psi_0(v))\setminus Star^o_{\psi_0(\Gamma_0(w))}(\psi_0(v))$. Then, since $(\alpha_0,\Gamma_0(w),\beta_0)$ is a linear automaton, there exist $R$-words $cs_1$ and $cs_2$ (where $c\in X$ and $s_1,s_2\in X^*$) labeling two overlapping segments of $\Gamma_0(w)$ starting from the vertex labeled by $v$. This allows us to sew on new paths labeled by the other sides of the relations $ar_1$ and $br_2$ (where $b\in X$ and $r_1,r_2\in X^*$), respectively, both starting from the vertex labeled by $v$. Since, $(ar_1,cs_1),(br_2,cs_2)\in R$, thus there exists a path between $a$ and $b$ in $LG\langle X|R\rangle$.

Now we assume that the above statement is true for $n=k$ and we prove the above statement for $n=k+1$.

If both the edges labeled by $a$ and $b$ belong to the set  $Star^o_{\Gamma_k(w)}(v)$, then by the induction hypothesis, there exists a path in $LG\langle X|R\rangle$ connecting $a$ and $b$. So, there is nothing to prove in this case.

Note that $\langle X|R\rangle$ is an  Adian presentation and therefore $SC(w)$ contains no positively or negatively labeled directed cycles by Lemma \ref{no cycles 1}. So, if $a\in Star^o_{\Gamma_k(w)}(v)$ and $b\in Star^o_{\Gamma_{k+1}(w)}(\psi_k(v))\setminus Star^o_{\psi_k(\Gamma_k(w))}(\psi_k(v))$, then either there exists a relation $(br,as)\in R$ (where $a,b\in X$ and $r,s\in  X^*$), where $as$ labels a path beginning at vertex $v$ in the birooted inverse word graph $(\alpha_k,\Gamma_k(w),\beta_k)$, or it occurs as consequence of sewing on a path labeled by an $R$-word starting from a vertex $v'$ (where the vertex labeled by $v'$  lies before the vertex labeled by $v$ on a positively labeled path from $\alpha_k$ to $\beta_k$), of $\Gamma_k(w)$ and then folding edges with the same label and same initial vertices.

The latter case is impossible, because if we read an $R$-word $ct$ (where $c\in X$ and $t\in X^*$) starting from the vertex labeled by $v'$ in the finite approximate complex $(\alpha_k,\Gamma_k(w),\beta_k)$ and we sew on a path labeled by the other side of the same relation, $du$ (where $d\in X$ and $u\in X^*$) to obtain $(\alpha'_k,\Gamma'_k(w),\beta'_k)$, then all the vertices of $LG\langle X|R\rangle$ that are labeled by those letters which also label the edges of the set $Star^o_{\Gamma_k(w)}(v')$, are connected by a path in $LG\langle X|R\rangle$, by our induction hypothesis. Since the edge labeled $d$ gets identified with one of the pre-existing edges in the set  $Star^o_{\Gamma_k(w)}(v')$, therefore there exists a path between $c$ and $d$ in $LG\langle X|R\rangle $. But, we also have $(ct,du)\in R$, i.e., there exists an edge between $c$ and $d$. So, there exists a closed path in $LG\langle X|R\rangle$. This is a contradiction.

Let $Star^o_{\Gamma_k(w)}(v)=\{ a_1,a_2,...,a_m|a_i\in X$ for $1\leq i\leq m\}$,  where each $a_i$ labels an edge. Then there are the following two cases to consider:

\textit{Case 1.} If $a\in Star^o_{\Gamma_k(w)}(v)$ and  $b\in Star^o_{\Gamma_{k+1}(w)}(\psi_k(v))\setminus Star^o_{\psi_k(\Gamma_k(w))}(\psi_k(\\ v))$. Then $a= a_i$ for some $i$ and there exists an $R$-word $as$ (where $a\in X$ and $s\in X^*$), that labels a path starting from the vertex labeled by $v$, which allowed us to sew on a path labeled by the other side of the same relation, for some relation of the form $(br,as)\in R$ (where $b\in X$ and $r\in X^*$). So, there exists an edge between $a$ and $b$ in $LG\langle X|R\rangle $ and $a$ is connected to $a_j$ for all $1\leq i\neq j\leq m$ by induction hypothesis. Hence, $b$ is connected with each $a_j$ for $1\leq j\leq m$ by a path in $LG\langle X|R\rangle$.

\textit{Case 2.} If   $a, b\in Star^o_{\Gamma_{k+1}(w)}(\psi_k(v))\setminus Star^o_{\psi_k(\Gamma_k(w))}(\psi_k(v))$. Then there exist $R$-words of the form $a_is_1$ and $a_js_2$ (where $a_i,a_j\in X$ and $s_1,s_2\in X^*$) for some $i,j\in \{1,2,...,m\}$, that can be read in the finite approximate complex $(\alpha_k,\Gamma_k(w),\beta_k)$, starting from the vertex labeled by $v$, that allowed us to sew on positively labeled paths labeled by the other sides of relations of the form $(ar_1,a_is_1), (br_2,a_js_2)\in R$ (where $a,b\in X$ and $r_1,r_2\in X^*$). Hence there exist an edge between $a$ and $a_i$ and an edge between $b$ and $a_j$ in $LG\langle X|R\rangle$. So, if $i=j$ , then there exists a path between $a$ and $b$ in $LG\langle X|R\rangle$ and if $i\neq j$, then $a_i$ and $a_j$ are distinct edges and $a_i$ is connected with $a_j$ in the $LG\langle X|R\rangle$ by the induction hypothesis. Hence, $a$ and $b$ are connected by a path in $LG\langle X|R\rangle$.

 \end{proof}

The following Corollary of Lemma \ref{path} is not useful for the proof of the main theorem of this paper. However, it provides some information about the structure of Adian semigroups.

\begin{corollary} Let $S=Sg\langle X|R\rangle$ be an Adian semigroup. Then $S$ has no idempotent element.
\end{corollary}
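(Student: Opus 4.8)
The plan is a short argument by contradiction that reduces the statement to Lemma~\ref{no cycles 1}.

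Suppose $S=Sg\langle X|R\rangle$ has an idempotent $e$. Since $S$ is generated as a semigroup by $X$, the element $e$ is represented by a nonempty positive word $w\in X^{+}$, so that $w^{2}\underset{S}{=}w$. By Proposition~\ref{embedding 1} the natural homomorphism $S\to M=Inv\langle X|R\rangle$ is injective, and it carries the idempotent $e$ to an idempotent of $M$; hence $w^{2}\underset{M}{=}w$, i.e. $[w]_{M}$ is idempotent in $M$.

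Next I would use that an idempotent of an inverse semigroup is its own inverse: from $w^{2}\underset{M}{=}w$ we get $[w]_{M}^{-1}=[w]_{M}$, so, reading $ww^{-1}$ as a word, $ww^{-1}\underset{M}{=}[w]_{M}[w]_{M}^{-1}=[w]_{M}^{2}=w$. Thus in $S\Gamma(M,X,w)$, the $1$-skeleton of $SC(w)$, the initial vertex $ww^{-1}$ and the terminal vertex $w$ coincide. By Theorem~\ref{Stephen's thm} we have $w\in L(w)$, so $w$ labels a path from the vertex $ww^{-1}$ to the vertex $w$ in $S\Gamma(M,X,w)$; since these two vertices are the same vertex and $w\in X^{+}$ is nonempty, $w$ labels a nontrivial closed directed path labelled by a positive word, and hence $SC(w)$ contains a (positively labelled) directed cycle of $1$-cells. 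This contradicts Lemma~\ref{no cycles 1}, so $S$ has no idempotent.

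There is no substantial obstacle here; two small points need to be stated carefully. First, one must note that $w$ is nonempty, which is automatic because $S$ is a semigroup (not a monoid) presentation, so its elements are classes of nonempty words — and it is precisely this nonemptiness that makes the closed path above a genuine directed cycle. Second, although the statement is phrased as a corollary of Lemma~\ref{path}, the argument above uses only Lemma~\ref{no cycles 1} (which itself underlies Lemma~\ref{path}); alternatively one can read the contradiction directly off Lemma~\ref{path}, since identifying the roots $\alpha$ and $\beta$ forces, at that common vertex, outgoing edges whose labels must be joined by a path in $LG\langle X|R\rangle$, and following the positive cycle of edges around $SC(w)$ then closes up a cycle in the left graph, contrary to the Adian hypothesis.
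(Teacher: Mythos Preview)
Your main argument via Lemma~\ref{no cycles 1} is correct and complete: from $w\underset{M}{=}w^{2}$ you deduce $ww^{-1}\underset{M}{=}w$, so the two roots of the Sch\"utzenberger automaton coincide and the nonempty positive word $w$ labels a closed directed walk in $SC(w)$, which forces a directed cycle of $1$-cells and contradicts Lemma~\ref{no cycles 1}.

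This is a genuinely different route from the paper. The paper stays at the level of the approximate complexes and the left graph: from $w\underset{M}{=}w^{2}$ it gets $w^{2}\in L(w)$ via Theorem~\ref{Stephen's thm}, picks the least $n$ for which $w^{2}$ can be read from $\alpha_{n}$ to $\beta_{n}$ in $(\alpha_{n},\Gamma_{n}(w),\beta_{n})$, and then invokes Lemma~\ref{path} with $a$ the first letter of $w$ to produce a closed path at $a$ in $LG\langle X|R\rangle$, contradicting the Adian hypothesis. Your approach is shorter and more conceptual, bypassing the approximate-complex bookkeeping entirely; the paper's approach has the virtue of making the Corollary a literal application of Lemma~\ref{path}, which is how it is positioned in the text. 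Your closing sketch of an alternative via Lemma~\ref{path} is in the spirit of the paper's argument but is loosely phrased: note that Lemma~\ref{path} is stated for approximate complexes (where $\alpha$ and $\beta$ are never identified, by Lemma~\ref{no folding}), so one should argue, as the paper does, by reading $w^{2}$ in some $\Gamma_{n}(w)$ rather than by ``identifying the roots'' in $SC(w)$.
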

\begin{proof}
We assume that $S$ contains an idempotent element $w$ for some $w\in X^+$. It follows from the Proposition \ref{embedding 1} that $S$ embeds in the corresponding inverse semigroup $M=Inv\langle X|R\rangle$. So $w\underset{M}{=}w^2$. It follows from the Theorem \ref{Stephen's thm} that $w^2\in L(w)$. Then there exists a least positive integer $n$, such that $w^2$ labels a path from the initial vertex $\alpha_n$ to the terminal vertex $\beta_n$  of the approximate complex $(\alpha_n,\Gamma_n(w),\beta_n)$. We assume that $a\in X$ is the prefix letter of $w$. Then by Lemma \ref{path} there exists a path from $a$ to $a$ in $LG\langle X|R\rangle$. So $LG\langle X|R\rangle$ contains a closed a path. This is a contradiction.

\end{proof}

 The following lemma shows that if $Inv\langle X|R\rangle$ is an Adian inverse semigroup, then the construction of the Sch\"{u}tzenberger complex of a positive word only involves the elementary $\mathscr{P}$-expansion process and no folding at all.

 \begin{lemma}\label{no folding}

Let $M=Inv\langle X,R\rangle$ be an Adian inverse semigroup and $w\in X^+$. Then no two edges fold together in the construction of the Sch\"{u}tzenberger complex of $w$.
\end{lemma}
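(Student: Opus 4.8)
The plan is to prove, by induction on $n$, the stronger statement that every approximate complex $(\alpha_n,\Gamma_n(w),\beta_n)$ appearing in the sequence of full $\mathscr{P}$-expansions of $w$ is \emph{deterministic and co-deterministic} as an inverse word graph. Since $w\in X^+$, the graph $\Gamma_0(w)$ is a directed line, so the base case holds; and once the inductive statement is known, no folding is ever performed in passing from $\Gamma_n(w)$ to $\Gamma_{n+1}(w)$, which is exactly the assertion of the lemma.

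For the inductive step assume $\Gamma_k(w)$ is deterministic and co-deterministic, and suppose toward a contradiction that a folding is forced after elementary $\mathscr{P}$-expansions are performed on $\Gamma_k(w)$. A folding requires two distinct edges with a common label $x\in X\cup X^{-1}$ leaving one vertex. Every internal vertex of a sewn-on positive path has a unique positive out-edge and a unique positive in-edge, so the vertex in question is a vertex $v$ of $\Gamma_k(w)$; and, using determinism (respectively co-determinism) of $\Gamma_k(w)$, at least one of the two edges is newly created, hence it is the first edge --- labeled by the first letter of $s$ --- of a path sewn on at $v$ while reading a relator side $r$ of some relation $(r,s)\in R$, or, in the co-deterministic case, the last edge of such a path (the argument being dual throughout). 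Treat the first case; write $a$ for the first letter of $r$ and $x$ for the first letter of $s$. Since $\langle X|R\rangle$ is Adian, $a\neq x$, and since $r$ is readable from $v$, the letter $a$ labels an edge of $Star^o_{\Gamma_k(w)}(v)$.

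The structural remark that drives the argument is that, by repeated application of Lemma \ref{path} together with the hypothesis that $LG\langle X|R\rangle$ is a forest, the set $T_v$ of labels on the edges of $Star^o_{\Gamma_k(w)}(v)$ spans a subtree of $LG\langle X|R\rangle$, and each elementary $\mathscr{P}$-expansion ever carried out at $v$ enlarges $T_v$ by one vertex and one edge. A folding in the case at hand means $x$ already lies in $T_v$. The relation $(r,s)$ contributes the edge $\{a,x\}$ to $LG\langle X|R\rangle$; since $a$ and $x$ already lie in the subtree $T_v$, this edge must itself lie in $T_v$, for otherwise $T_v$ together with $\{a,x\}$ would contain a cycle of $LG\langle X|R\rangle$. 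So $\{a,x\}$ was placed in $T_v$ by an earlier elementary $\mathscr{P}$-expansion at $v$, necessarily one using a relation whose two leading letters are $a$ and $x$; since $LG\langle X|R\rangle$ has no repeated edge, that relation is $(r,s)$ itself. But then one side of $(r,s)$ was read from $v$ and the other sewn on at that earlier stage, so $s$ became readable from $v$; as no folding occurs before stage $k$, readability from $v$ is preserved by every subsequent $\mathscr{P}$-expansion, so $s$ is still readable from $v$ now --- contradicting that $s$ is being sewn on at $v$. A clash between two paths sewn on simultaneously at $v$, and the co-deterministic case (with $RG\langle X|R\rangle$ and the last letters of relators in place of $LG\langle X|R\rangle$ and first letters), are ruled out by the same reasoning. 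Hence no folding occurs, $\Gamma_{k+1}(w)$ is again deterministic and co-deterministic, and the induction is complete.

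The delicate point, which I expect to be the main obstacle, is the dichotomy in the third paragraph: one must be certain that whenever $x$ already lies in $T_v$ the resulting configuration in $LG\langle X|R\rangle$ (or $RG\langle X|R\rangle$) is a genuine cycle --- of length at least two, or a repeated edge --- rather than merely the edge supplied by $(r,s)$, and one must follow carefully whether the ``old'' conflicting edge at $v$ arose from the linear graph $\Gamma_0(w)$ or from an earlier sewing, so that the persistence-of-readability contradiction genuinely applies. Maintaining the invariant that $T_v$ is a subtree of $LG\langle X|R\rangle$ throughout the entire expansion sequence is what keeps the bookkeeping under control.
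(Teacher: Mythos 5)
Your argument is correct and is essentially the paper's own proof: both proceed by induction along the sequence of full $\mathscr{P}$-expansions, reduce any hypothetical folding to a clash at a vertex $v$ of $\Gamma_k(w)$, and derive a contradiction from Lemma \ref{path} together with acyclicity of $LG\langle X|R\rangle$ (dually $RG\langle X|R\rangle$), the Adian condition on leading letters, and persistence of readability. Your subtree invariant $T_v$ is just a clean repackaging of the paper's two claims (a new label cannot equal an old one; two new labels cannot coincide), and the ``delicate point'' you flag --- that the offending $LG$-edge must come from a relation already applied at $v$, so the other side is already readable there --- is resolved in the paper by exactly the same persistence argument you sketch.
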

\begin{proof}
We use induction on $n$ to show that no two edges fold together in the construction of each finite approximate complex of the sequence $\{(\alpha_n,\Gamma_n(w),\beta_n)|n\in \mathbb{N}\}$.

The above statement is true for $n=0$, because $w\in X^+$, so $(\alpha_0,\Gamma_0(w),\beta_0)$ is just a linear automaton with no two consecutive edges oppositely oriented. Hence, no two edges fold together in the construction of $(\alpha_0,\Gamma_0(w),\beta_0)$.

We assume that the above statement is true for $n=k$, i.e., no two edges fold together in the construction of $(\alpha_k,\Gamma_k(w),\beta_k)$. We show that the above statement is also true for $n=k+1$. .

We apply an elementary expansion on $(\alpha_{k},\Gamma_{k}(w),\beta_{k})$  to obtain $(\alpha'_k,\Gamma'_{k}(w),\\ \beta'_{k})$ and then perform folding in $(\alpha'_k,\Gamma'_{k}(w),\beta'_{k})$  to obtain $(\alpha_{k+1},\Gamma_{k+1}(w),\beta_{k+1}\\ )$  in the full $\mathscr{P}$-expansion. So, for any vertex $v$ of  $(\alpha_k,\Gamma_k(w),\beta_k)$, if we read an $R$-word labeling a path starting from $v$ to a vertex $v'$, then we sew on a new path labeled by the other side of the same relation from $v$ to $v'$ and then we perform folding if possible. In order to prove the above statement we just need to show that no two edges fold together in $Star^o_{\Gamma'_k(w)}(v)$. The case that no two edges fold together in $Star^i_{\Gamma'_k(w)}(v))$ is dual to the previous case.

Let $\{a_i\in X|0\leq i\leq l\}$ be the set of labels of edges in $Star^o_{\Gamma_k(w)}(v)$ and let $b_0,b_1,...,b_m$ be the labels of those edges which are in $Star^o_{\Gamma'_k(w)}(v)\setminus Star^o_{\Gamma_k(w)}(v)$. We claim that

1. $a_i \neq b_j$ for $0\leq i\leq l$ and $0\leq j\leq m$, and

 2. $b_i\neq b_j$ for $0\leq i\neq j\leq m$.

  To establish our first claim, we assume that $a_i=b_j$ for some $0\leq i\leq l$ and $0\leq j\leq m$. It has already been shown in the proof  Lemma \ref{path} that if $b_j\in Star^o_{\Gamma'_k(w)}(v)\setminus Star^o_{\Gamma_k(w)}(v)$ then there exists a relation of the form $(b_jr,cs)\in R$ (where $b_j,c\in X$ and $r,s\in X^*$) and $cs$ labels a path from the vertex $v$ to vertex $v'$. It follows that $c \in \{a_t\in X|0\leq t\leq l\}$. If $c=a_i$, then $(b_jr,cs)=(a_ir,a_is)\in R$. This contradicts the fact that  $\langle X|R\rangle$ is an Adian presentation.

  If $c=a_h$ for some $h$ such that $0\leq h\neq i\leq l$, then by Lemma \ref{path}, $a_i$ and $a_h$ are connected by a path $p$ in $LG\langle X|R\rangle$. We also have $(b_jr,cs)=(a_hr,a_is)\in R$. So, there exists an edge $e$ between $a_i$ and $a_h$ in $LG\langle X|R\rangle$. If $p$ and $e$ represent the same path in $LG\langle X|R\rangle$, then the path labeled by $b_jr$ from the vertex $v$ to the vertex $v'$ already exists in $(\alpha_{k},\Gamma_{k}(w),\beta_{k})$, but this contradicts our earlier assumption that the path labeled by $b_jr$ from $v$ to $v'$ did not exist in $(\alpha_{k},\Gamma_{k}(w),\beta_{k})$. So, $p$ and $e$ represents two different co-terminal paths.  Hence there exists a cycle in $LG\langle X|R\rangle$. This is a contradiction.

To establish our second claim, we assume that $b_i=b_j$ for some $0\leq i,j\leq m$ with $i\neq j$. Note that the paths starting from the vertex $v$ with initial edges labeled by $b_i$ and $b_j$ are consequences of relations of the form $(b_ir_i,a_is_i),(b_jr_j,a_js_j)\in R$ respectively. If $a_i=a_j$, then either both of these relations are same or they are different relations. If both of these relations are same then this case reduces to the previous case, which has already been discussed above in the proof of claim 1. If these relations are different from each other, then there exists a cycle in $LG\langle X|R\rangle$, which is a contradiction. So, $a_i\neq a_j$. But then by lemma \ref{path}, there exists a path connecting $a_i$ with $a_j$ in $LG\langle X|R\rangle$. Also, there exist edges from $a_i$ to $b_i$ and $a_j$ to $b_j$ in $LG\langle X|R\rangle$, corresponding to the relators $(b_ir_i,a_is_i)$ and $(b_jr_j,a_js_j)$. This implies that there is a cycle in $LG\langle X|R\rangle$, which is a contradiction.

\end{proof}

\begin{proposition}
Let $M=Inv\langle X|R\rangle$ be an Adian inverse semigroup and $w_1\in X^+$. Then:
\begin{enumerate}
\item[(i)] $\psi_n:(\alpha_n ,\Gamma_n(w_1),\beta_n)\to (\alpha_{n+1}, \Gamma_{n+1}(w_1),\beta_{n+1})$ is an embedding for all $n\in \mathbb{N}$.

\item[(ii)] $SC(w_1)$ has exactly one source vertex $\alpha$ and exactly one sink vertex $\beta$, where $(\alpha ,\Gamma(w_1),\beta)$ is the underlying birooted graph of $SC(w_1)$.

\item[(iii)] Every directed edge of $SC(w_1)$ can be extended to a positively labeled directed transversal from $\alpha$ to $\beta$.

\end{enumerate}
\end{proposition}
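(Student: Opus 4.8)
The plan is to prove the three parts in order, bootstrapping each from the structural lemmas already established, especially Lemma \ref{no folding} and Lemma \ref{path} (together with Lemma \ref{no cycles 1}).

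\emph{Part (i).} By Lemma \ref{no folding}, in the construction of $SC(w_1)$ for $w_1\in X^+$ no two edges ever fold together. Hence each full $\mathscr{P}$-expansion step $(\alpha_n,\Gamma_n(w_1),\beta_n)\to(\alpha_{n+1},\Gamma_{n+1}(w_1),\beta_{n+1})$ consists only of sewing on new linear paths (the missing halves of relations) with no subsequent identification of distinct edges or vertices of $\Gamma_n(w_1)$. I would argue that this forces $\psi_n$ to be injective on both vertices and edges: two distinct vertices of $\Gamma_n(w_1)$ cannot be identified in $\Gamma_{n+1}(w_1)$ because such an identification would have to be produced by a fold (the new sewn-on paths attach only at their endpoints, which are pre-existing vertices $v,v'$), and similarly for edges. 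A clean way to phrase this is: a vertex identification in a determinization step is always the consequence of a sequence of edge foldings, and we have shown no edge folding occurs; hence the determinized form of $\Gamma'_n(w_1)$ is $\Gamma'_n(w_1)$ itself with $\psi_n$ the inclusion. So $\psi_n$ is an embedding, and the direct limit $SC(w_1)$ contains each $(\alpha_n,\Gamma_n(w_1),\beta_n)$ as a subcomplex.

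\emph{Part (ii).} First, $\alpha_0$ is a source and $\beta_0$ a sink of the linear graph $\Gamma_0(w_1)$ since $w_1\in X^+$. I would show inductively that each $\Gamma_n(w_1)$ has $\alpha_n$ as its unique source and $\beta_n$ as its unique sink. When we sew on a path labeled by the other side of a relation $(r,s)\in R$ from $v$ to $v'$, the new internal vertices of that path each have exactly one incoming and one outgoing positive edge, so they are neither sources nor sinks; and $v,v'$ already had incident positive edges on both sides (or were $\alpha_n$/$\beta_n$) — here I use Lemma \ref{no cycles 1} to rule out the pathology that the sewn path creates a directed cycle, and the fact that $R$ is Adian so both sides of a relation are nonempty positive words, hence $v\ne v'$ and the path genuinely has positive length. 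Uniqueness of the source/sink passes to the direct limit: any vertex of $SC(w_1)$ lies in some $\Gamma_n(w_1)$, and if it were a source (resp. sink) in $SC(w_1)$ it would be one in every larger $\Gamma_m(w_1)$, forcing it to equal $\alpha$ (resp. $\beta$). This gives exactly one source $\alpha=\alpha_0$ and one sink $\beta=\beta_0$.

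\emph{Part (iii).} Take a directed (positive) edge $e$ of $SC(w_1)$; it lies in some $\Gamma_n(w_1)$. Since $SC(w_1)$ has no directed cycles (Lemma \ref{no cycles 1}) and — by part (ii) — a unique source $\alpha$ and unique sink $\beta$, I would build a maximal positively labeled directed path backward from the tail of $e$: at each vertex other than $\alpha$ there is an incoming positive edge (else that vertex would be an additional source), and acyclicity prevents the backward walk from repeating a vertex, so within the locally finite subcomplex reached it must terminate at $\alpha$. Dually, extend forward from the head of $e$ to reach $\beta$. Concatenating gives a positively labeled directed path from $\alpha$ through $e$ to $\beta$; that it is a transversal (i.e. reads an element of $L(w_1)$, equivalently its label equals $w_1$ in $M$) follows since any positive path from $\alpha$ to $\beta$ lies in $L(w_1)$ by Theorem \ref{Stephen's thm}, and by the embedding of $SC(w_1)$ into the Cayley complex of $Gp\langle X|R\rangle$ its label is forced to equal $w_1$ there, hence in $M$ by $E$-unitariness (Theorem \ref{E-unitary}).

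\emph{Main obstacle.} The delicate point is part (i): one must be careful that ``no edge folding'' genuinely implies ``no vertex identification'' in the determinization — a priori a determinization could merge vertices without first merging edges. I expect the resolution is the standard fact that folding only identifies two vertices when it identifies the two edges leading into (or out of) them, so an induction on the folding sequence reduces vertex-merging to edge-merging, which Lemma \ref{no folding} has excluded. The backward/forward path-extension arguments in (ii) and (iii) are routine once acyclicity and the source/sink structure are in hand, but they do rely essentially on the embedding into the Cayley complex to pin down the \emph{label} of the resulting transversal.
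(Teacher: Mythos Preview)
Your proposal is correct and follows essentially the same approach as the paper: part (i) via Lemma \ref{no folding} (no folding $\Rightarrow$ $\psi_n$ injective), part (ii) by induction on the approximate complexes noting that interior vertices of a sewn path are neither sources nor sinks, and part (iii) by backward/forward extension using acyclicity (Lemma \ref{no cycles 1}) together with the unique source/sink from (ii). One minor remark: in the paper a \emph{transversal} is defined simply as any positively labeled path from $\alpha$ to $\beta$, so your closing argument in (iii) about the label equaling $w_1$ in $M$ via $E$-unitariness and the Cayley embedding is unnecessary --- the backward/forward extension already completes the proof.
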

\begin{proof}

\begin{enumerate}
\item[(i).]  (i) follows immediately from Lemma \ref{no folding}. Since no foldings occur in the construction of $\Gamma_{n+1}(w)$ from $\Gamma_n(w)$, the images of two distinct vertices of the approximate complex $(\alpha_n ,\Gamma_n(w),\beta_n)$  remain distinct under the map $\psi_n$, for all $n\in \mathbb{N}$.

\item[(ii).] (ii) follows from the fact that $\psi_n(\alpha_n)=\alpha_{n+1}=\alpha$ and $\psi_n(\beta_n)=\beta_{n+1}=\beta$ for all $n\in \mathbb{N}$. We sew on a new positively labeled path  (labeled by one side of a relation) to an approximate complex $(\alpha ,\Gamma_n(w),\beta)$ only when we read a positively labeled segment of a path from $\alpha$ to $\beta$, labeled by the other side of the same relation. Each 2-cell is two sided and no folding occurs in the construction of $(\alpha, \Gamma_{n+1}(w),\beta)$ from $(\alpha_n, \Gamma_{n}(w),\beta_n)$. So $\alpha$ and $\beta$ remain the source and sink vertices of the approximate complex $(\alpha ,\Gamma_n(w),\beta)$ for all $n\in \mathbb{N}$. Furthermore, $\alpha$ and $\beta$ were distinct vertices of $(\alpha,\Gamma_0(w_1),\beta)$ and no two vertices get identified with each other in the construction of $SC(w_1)$, so $\alpha $ and $\beta$ remain distinct vertices of $SC(w_1)$.

\item[(iii).] Let $e_0$ be a directed edge of $SC(w_1)$. The complex $SC(w_1)$ contains only one source vertex $\alpha$ and one sink vertex $\beta$. So, the initial vertex of $e_0$ is either $\alpha$ or it is a terminal vertex of another edge $e_{-1}$. If the initial vertex of $e_0$ is the terminal vertex of an edge $e_{-1}$ then the initial vertex vertex of  $e_{-1}$ is either $\alpha$ or the terminal vertex of an edge $e_{-2}$. Since there are no directed cycles in $SC(w_1)$ (by Lemma \ref{no cycles 1}), we can find a sequence of edges $e_{-n},e_{-(n-1)},...,e_{-1},e_0$ that constitutes a positively labeled directed path from $\alpha$ to the terminal vertex of edge $e_0$.

Similarly, if the terminal vertex of $e_0$ is not $\beta$ then we can find an edge $e_1$ whose initial vertex is the terminal vertex of $e_0$. Continuing in this way, we can find a sequence of edges $e_0,e_1,...,e_m$ that constitutes a positively labeled directed path from the edge $e_0$ to $\beta$. Hence, the  sequence of edges $e_{-n},e_{-(n-1)},...,e_0,...,e_m$ constitutes a positively labeled directed transversal from $\alpha$ to $\beta$.
\end{enumerate}
\end{proof}

 The word problem for $S=Sg\langle X|R\rangle$ is the question of whether there is an algorithm which given any two words $u,v\in X^+$, will determine whether $u=v$ in $S$.

   For any two words $u,v\in X^+$, $u\underset{S}{=}v$ if and only if there exists a transition sequence from $u$ to $v$.

   \begin{center}
   $u\equiv w_0\to w_1\to...\to w_n\equiv v$; for some $n\geq 0$,

   \end{center}

where $w_{i-1}\to w_i$ represents that $w_i$ is obtained from $w_{i-1}$ by replacing one side of a relation $r$ (that happens to be a subword of $w_{i-1}$)  with the other side $s$ of the same relation, for some $(r,s)\in R$. The above transition sequence is called a \textit{regular derivation sequence of length $n$} for the pair $(u,v)$ over the presentation $Sg\langle X|R\rangle$.

 A \textit{semigroup diagram} or \textit{$S$-diagram} over a semigroup presentation $Sg\langle X|R\rangle$ for a pair of positive words $(u,v)$  is a finite, planar cell complex $D \subseteq \mathbb{R}^2$, that satisfies the following properties:

   $\bullet$   The complex $D$ is connected and simply connected.

  $\bullet$ Each edge ($1$-cell) is directed and labeled by a letter of the alphabet $X$.

    $\bullet$ Each region (2-cell) of $D$  is labeled by the word $rs^{-1}$ for some defining relation $(r,s)\in R$.

  $\bullet$  There is a distinguished vertex $\alpha$ on the boundary of $D$ such that the boundary of $D$ starting at $\alpha$ is labeled by the word $uv^{-1}$. $\alpha$ is a source in $D$ (i.e. there is no edge in $D$ with terminal vertex $\alpha$).

  $\bullet$ There are no interior sources or sinks in $D$.

  %Further details about $S$-diagrams can be read in \cite{RM}.

  In \cite{RM}, Remmers proved an analogue of Van Kampen's Lemma for semigroups to address the word problem for semigroups.

\begin{theorem}\label{Remmers thm}
Let $S=Sg\langle X|R\rangle$ be a semigroup and $u,v\in X^+$. Then there exists a  regular derivation sequence of length $n$ for the pair $(u,v)$ over the presentation $Sg\langle X|R\rangle$ if and only if there is an $S$-diagram over the presentation $Sg\langle X|R\rangle$ for the pair $(u,v)$ having exactly $n$ regions.

\end{theorem}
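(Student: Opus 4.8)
The plan is to prove the two implications separately, by induction --- on the length $n$ for the direction ``derivation sequence $\Rightarrow$ diagram'', and on the number of regions $n$ for the direction ``diagram $\Rightarrow$ derivation sequence''. One preliminary observation simplifies things: since $\langle X\mid R\rangle$ is a positive presentation, both sides $r,s$ of every relation $(r,s)\in R$ lie in $X^{+}$, so no side of a relation is empty and the usual degeneracies (spurs, one-cells bounding a collapsed region) do not occur.

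\emph{From a derivation sequence to a diagram.} Suppose $u\equiv w_{0}\to w_{1}\to\cdots\to w_{n}\equiv v$. If $n=0$ then $u\equiv v$, and the linear graph of $u$ --- a single directed segment read $u$, traversed out from $\alpha$ and back --- is an $S$-diagram for $(u,u)$ with no regions and boundary word $uu^{-1}$. For the inductive step, let $D_{n-1}$ be an $S$-diagram for $(u,w_{n-1})$ with $n-1$ regions, and write the last step as $w_{n-1}\equiv p\,r\,q\to p\,s\,q\equiv w_{n}$ with $(r,s)\in R$. The boundary of $D_{n-1}$ reads $u\,w_{n-1}^{-1}$, so the positive side labeled $w_{n-1}$ contains a subpath $P$ labeled $r$, from a vertex $a$ to a vertex $b$ (with $a=\alpha$ if $p$ is empty, $b$ the sink if $q$ is empty). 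Attach a fresh $2$-cell $\pi$ with boundary $rs^{-1}$ to $D_{n-1}$ from the outside along $P$, identifying the $r$-side of $\partial\pi$ with $P$ and leaving the $s$-side of $\partial\pi$ as a new positive path from $a$ to $b$. The result $D_{n}$ is connected and simply connected (a disk glued along an arc), has exactly $n$ regions, keeps $\alpha$ as its unique source, introduces no interior source or sink (each newly interior vertex --- those strictly inside $P$, and those on the new $s$-path --- lies on a monotone positive path), and has boundary word $u\,(p\,s\,q)^{-1}=u\,w_{n}^{-1}$; hence it is an $S$-diagram for $(u,w_{n})$.

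\emph{From a diagram to a derivation sequence.} Let $D$ be an $S$-diagram for $(u,v)$ with $n$ regions. If $n=0$ then $D$ has no $2$-cells, hence is a finite tree, and its boundary walk based at $\alpha$ traverses every edge of $D$ twice and spells $u\,v^{-1}$ --- a maximal positive run followed by a maximal negative run. Since $\alpha$ has no incoming edge, the positive run never revisits $\alpha$; as it accounts for all positive traversals of the boundary walk while the walk must cover every edge of $D$ in both directions, the run must traverse every edge, which for a tree forces $D$ to be a single directed path from $\alpha$ to its sink, whence $v\equiv u$ and the empty derivation suffices. Now let $n\ge 1$. The aim is to \emph{peel off a boundary region}: exhibit a $2$-cell $\pi$ of $D$ with relator $(r,s)\in R$, one full side of whose relator-boundary $rs^{-1}$, say the $r$-side, is a subpath of one of the two boundary paths --- say of the $v$-side, giving a factorization $v\equiv v_{1}\,r\,v_{2}$. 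Removing the open cell $\pi$ but retaining its $s$-side gives a complex $D'$ that is again connected and simply connected (a region adjacent to the outer face has been absorbed into it, creating no new hole), has $n-1$ regions, keeps $\alpha$ as its unique source with no interior source or sink, and has boundary word $u\,(v_{1}\,s\,v_{2})^{-1}$. By induction there is a regular derivation sequence of length $n-1$ for $(u,\,v_{1}sv_{2})$; appending the single step $v_{1}sv_{2}\to v_{1}rv_{2}\equiv v$ --- legitimate, since a derivation step may replace either side of a relation by the other --- yields a regular derivation of length $n$ for $(u,v)$. If the exposed side lies on the $u$-side instead, one prepends the analogous step, with the same count.

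\emph{Main obstacle.} The forward direction and both base cases are bookkeeping; the crux is the reverse inductive step, and specifically the claim that for $n\ge 1$ there is a region meeting $\partial D$ in a \emph{full} side of its relator-boundary. A region meeting $\partial D$ in only a proper subpath of a side would, on deletion, alter the boundary word by something other than a single relation, so this fullness is essential; one also has to check that excising the chosen region preserves every $S$-diagram axiom (most delicately, that no interior source or sink is created and that $\alpha$ stays the unique source). I expect to obtain such a region by a planarity argument exploiting the rigidity forced by the ``no interior source or sink'' condition: each region has a well-defined source-corner and sink-corner, the boundary cycle of $D$ has exactly the two turning points $\alpha$ and the sink, and an extremal region along the $v$-side --- for example the region incident to the last edge of that side, or one minimizing a suitable complexity --- can be shown to expose a full relator-side. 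With that in hand, the region count and the derivation length stay synchronized automatically from the constructions above.
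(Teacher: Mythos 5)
The paper does not actually prove this theorem; it is quoted from Remmers' paper \emph{On the geometry of semigroup presentations}, so there is no in-text proof to compare against. Your overall strategy---the two inductions, building the diagram by attaching one cell per derivation step, and extracting a derivation by peeling boundary regions---is the standard van Kampen-style argument and is the right shape. The forward direction and both base cases are essentially complete (modulo routine checks you correctly identify, such as verifying that newly interior vertices are neither sources nor sinks).

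The genuine gap is exactly where you flag it, and flagging it does not close it: the claim that every $S$-diagram with $n\geq 1$ regions contains a region one \emph{entire} relator-side of which is a subpath of $\partial D$. This lemma is the whole content of the hard direction---it is what the ``no interior sources or sinks'' axiom is for---and you offer only the expectation that ``an extremal region along the $v$-side'' will work. The specific candidates you name can fail. For instance, over $R=\{(ab,c),(ca,d)\}$ take the two-region diagram for the derivation $aba\to ca\to d$: the region incident to the last edge of the $u$-side $aba$ has top side $ca$ with the edge $c$ interior, so it exposes no full side on the $u$-side; one must instead find the \emph{other} region (which exposes $ab$ on top) or look at the $v$-side. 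A correct proof needs a genuine topmost/innermost argument: take the first edge $e$ of one boundary side, let $\pi$ be the region it bounds, and show by induction on the subdiagram trapped between $\mathrm{top}(\pi)$ and the boundary that no such subdiagram can be nonempty---or reproduce Remmers' transversal machinery. Until that lemma is proved, the reverse induction does not get off the ground. A secondary, smaller omission: after locating $\pi$ you assert that excising it preserves all the $S$-diagram axioms; the delicate point is that interior vertices of the exposed side must have no edges other than the two side-edges (otherwise the side is not literally a subpath of the boundary walk), and this deserves an explicit sentence rather than an assertion.
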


\begin{proposition}\label{equal}
Let $M=Inv\langle X|R\rangle$ be an Adian inverse semigroup and $w_1,w_2\in X^+$ such that $w_1\underset{M}{\leq} w_2$. Then:

\begin{enumerate}

\item[(i)]  There exists a (planar) $S$-diagram corresponding to the pair of words $(w_1,w_2)$ that embeds in $SC(w_1)$.

\item[(ii)] $w_1\underset{M}{=} w_2$.
\end{enumerate}
\end{proposition}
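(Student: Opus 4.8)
The plan is to make part~(i) the substantive step and then obtain part~(ii) by feeding the resulting $S$-diagram into Remmers' Theorem~\ref{Remmers thm}.

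I would begin by extracting two facts from the hypothesis $w_1\underset{M}{\le}w_2$. By Stephen's Theorem~\ref{Stephen's thm} it is equivalent to $w_2\in L(w_1)$, so $w_2$ labels a directed path from the source $\alpha$ to the sink $\beta$ of $SC(w_1)$ (the unique source and sink supplied by the preceding Proposition). Moreover, applying the maximal group image map $\sigma$ to the defining relation $w_1=w_1w_1^{-1}w_2$ of the natural partial order gives $\sigma(w_1)=\sigma(w_1)\sigma(w_1)^{-1}\sigma(w_2)=\sigma(w_2)$, that is $w_1\underset{G}{=}w_2$ in $G=Gp\langle X|R\rangle$; since $Sg\langle X|R\rangle$ embeds in $G$ by Theorem~\ref{embedding} and $w_1,w_2\in X^+$, this forces $w_1\underset{S}{=}w_2$ in $S=Sg\langle X|R\rangle$. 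Hence there is a regular derivation sequence for $(w_1,w_2)$, and by Theorem~\ref{Remmers thm} there is an $S$-diagram $\Delta$ for $(w_1,w_2)$: a finite planar simply connected complex whose boundary reads $w_1w_2^{-1}$ from a source vertex, with no interior sources or sinks and every $2$-cell labeled $rs^{-1}$ for some $(r,s)\in R$.

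For part~(i) the task is to realize $\Delta$ as an embedded subcomplex of $SC(w_1)$. Write $\partial\Delta=P_1P_2^{-1}$, where $P_1$ reads $w_1$ and $P_2$ reads $w_2$, both running from the source $\alpha$ to the common terminal vertex $\beta$. I would ``shell'' $\Delta$ onto the arc $P_1$: because $\Delta$ is planar with $\alpha$ a source, has no interior sources or sinks, and --- being a diagram over an Adian presentation --- has no directed cycles, one can repeatedly strip off a $2$-cell outermost along the $P_2$-side, each removal being the reverse of an elementary $\mathscr{P}$-expansion. Reading this shelling backwards exhibits $\Delta$ as the outcome of a finite sequence of elementary $\mathscr{P}$-expansions applied to the linear graph of $w_1$. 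Performing exactly this sequence inside the construction of $SC(w_1)$, starting from the linear graph of $w_1$, the argument of Lemma~\ref{no folding} shows that sewing on the other side of a relation over an Adian presentation never triggers a folding, so these expansions produce a subcomplex of $SC(w_1)$ combinatorially isomorphic to $\Delta$. This embeds $\Delta$ in $SC(w_1)$ and proves~(i), the image of $P_2$ being the $w_2$-labeled path found in the previous paragraph. Part~(ii) then follows by applying Theorem~\ref{Remmers thm} to this embedded $S$-diagram to recover a regular derivation $w_1\equiv z_0\to\cdots\to z_k\equiv w_2$ over $Sg\langle X|R\rangle$, so that $w_1\underset{S}{=}w_2$ and hence $w_1\underset{M}{=}w_2$ via the natural homomorphism $Sg\langle X|R\rangle\to M$ --- as was in fact already visible above.

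The step I expect to be the main obstacle is the shelling/embedding argument: one must verify that a planar $S$-diagram over an Adian presentation always admits a $2$-cell that can be peeled from the $w_2$-side without disconnecting the diagram or creating a new interior source or sink, so that the whole diagram is rebuilt from the $w_1$-spine by legitimate elementary $\mathscr{P}$-expansions. This is exactly where the Adian hypothesis is essential --- through the absence of directed cycles (Lemma~\ref{no cycles 1}), the absence of folding (Lemma~\ref{no folding}), and the source/sink structure of $SC(w_1)$ from the preceding Proposition; granting it, the remaining checks (all edge labels lie in $X$, the boundary word is $w_1w_2^{-1}$, each $2$-cell has the prescribed shape) are routine.
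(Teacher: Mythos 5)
Your argument is correct, but it runs in the opposite direction from the paper's. The paper stays entirely inside $SC(w_1)$: from $w_2\in L(w_1)$ it gets the second transversal, invokes simple connectivity (Lemma~\ref{simply connected}) to fill the region between the $w_1$- and $w_2$-transversals with finitely many $2$-cells, and then reads off a regular derivation sequence $w_1\to u_1\to\cdots\to w_2$ by pushing the $w_1$-transversal across successive generations of those cells; the $S$-diagram is thus a subcomplex of $SC(w_1)$ by construction (no folding, Lemma~\ref{no folding}), and part~(ii) is deduced \emph{afterwards} from Remmers' Theorem~\ref{Remmers thm}. You instead establish $w_1\underset{M}{=}w_2$ \emph{first}, externally, via $\sigma$ and the embedding $Sg\langle X|R\rangle\hookrightarrow Gp\langle X|R\rangle$ of Theorem~\ref{embedding}, then obtain an abstract diagram $\Delta$ from Remmers and work to push it back into $SC(w_1)$ --- this is essentially the paper's proof of the later Lemma~\ref{bigon} transplanted here, and it makes (ii) independent of (i). Two remarks on your part~(i). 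First, the shelling you worry about is not really an obstacle: Remmers' diagram for a derivation sequence is already stratified by the derivation steps, so it is built from the linear graph of $w_1$ by attaching one $2$-cell per step along a readable path, i.e.\ by legitimate elementary $\mathscr{P}$-expansions --- no search for a peelable cell is needed. Second, the genuine point of care in your route, which the paper's intrinsic construction sidesteps, is injectivity of the resulting map $\Delta\to SC(w_1)$: an arbitrary derivation sequence may contain cancelling pairs of cells whose images coincide in $SC(w_1)$, so you should take a reduced derivation/diagram (or argue as the paper does that the cells are distinct because they are carved out of $SC(w_1)$ itself); Lemma~\ref{no folding} alone controls edge identifications in Stephen's full-expansion sequence, not identifications of cells of an externally constructed diagram. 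The trade-off: your approach gives (ii) cheaply and cleanly, while the paper's approach makes the planarity and embedding of the diagram in (i) transparent at the cost of routing (ii) through it.
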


\begin{proof}

\begin{enumerate}

\item[(ii).] If $w_1\underset{M}{\leq} w_2$, then $w_2\in L(\alpha,\Gamma(w_1),\beta)$ (Stephen, \cite{SG}). So, $w_2$ labels a directed transversal from the vertex $\alpha$ to the vertex $\beta$. Since the transversals labeled by $w_1$ and $w_2$ are co-terminal and $SC(w_1)$ is simply connected by Lemma \ref{simply connected}, then the closed path labeled by the word $w_1w_2^{-1}$ is filled with finitely many 2-cells. Every 2-cell is two sided because $\langle X|R\rangle$ is an Adian presentation.  So, we can obtain a regular derivation sequence from the word $w_1$ to the word $w_2$ from the complex $SC(w_1)$, over the presentation $\langle X|R\rangle$ in the following way.

Geometrically, we push the transversal labeled by $w_1$ across all first generation 2-cells that were contained in the closed path labeled $w_1w_2^{-1}$ to obtain a new transversal labeled by $u_1\in X^+$. Combinatorially,  we have replaced some of the non overlapping $R$-words that were subwords of the word $w_1$ by the other side of the same relations. Then we push the transversal labeled by $u_1$ across all generation 2-cells that were contained in the closed path labeled by $w_1w_2^{-1}$ to obtain a new transversal labeled by $u_2\in X^+$. Again, we have just replaced some of the non overlapping $R$-words that were subwords of the word $u_1$. This process eventually terminates because the closed path labeled by $w_1w_2^{-1}$ contains only finitely many 2-cells. So, we obtain a regular derivation $w_1\to u_1\to u_2\to ...\to w_2$ over the presentation $\langle X|R\rangle$.  Hence there exists an $S$-diagram $\mathscr{S}$ corresponding to this derivation sequence.  Since no two edges fold together in $SC(w_1)$, therefore $\mathscr{S}$ embeds in $SC(w_1)$.

\item[(ii).] This follows immediately form $(i)$ and the Theorem \ref{Remmers thm}.
\end{enumerate}
\end{proof}

\begin{lemma}\label{bigon}

Let $M=Inv\langle X|R\rangle$ be an Adian inverse semigroup, let $w\in (X\cup X^{-1})^*$ and let $w_1,w_2\in X^+$ label two co-terminal paths in $SC(w)$. Then there exists an $S$-diagram corresponding to the pair of words $(w_1,w_2)$ that embeds in $SC(w)$.
\end{lemma}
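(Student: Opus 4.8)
The plan is to reduce the statement for an arbitrary word $w \in (X \cup X^{-1})^*$ to the positive-word case already handled in Proposition \ref{equal}. First I would observe that $w_1$ and $w_2$, being co-terminal in $SC(w)$, both label directed paths from a common vertex $p$ to a common vertex $q$; since $SC(w)$ is simply connected by Lemma \ref{simply connected} and $\langle X|R\rangle$ is Adian (so every 2-cell is two-sided), the closed path labeled $w_1 w_2^{-1}$ bounds a subcomplex of $SC(w)$ filled by finitely many 2-cells. The key point is that this subcomplex, together with the two transversals, \emph{is} the desired $S$-diagram: it is a finite, planar, connected, simply connected cell complex, its edges are positively labeled by letters of $X$ (the transversals are positive, and so are the boundaries of all the 2-cells met by the push-across procedure), each 2-cell carries a label $rs^{-1}$ for a relation $(r,s)\in R$, and by Lemma \ref{no cycles 1} there are no directed cycles, so the vertex $p$ is a source and $q$ a sink with no interior sources or sinks.

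The technical heart is to produce this subcomplex and to verify it embeds. I would run exactly the ``push the transversal across successive generations of 2-cells'' argument from the proof of Proposition \ref{equal}(i): start from the transversal labeled $w_1$, push across the first-generation 2-cells lying in the region bounded by $w_1 w_2^{-1}$ to get a positive word $u_1$, then across the next generation to get $u_2$, and so on; this terminates because only finitely many 2-cells lie in the bounded region, yielding a regular derivation $w_1 \to u_1 \to \cdots \to w_2$ over $\langle X|R\rangle$. By Theorem \ref{Remmers thm} this derivation corresponds to an $S$-diagram $\mathscr{S}$ for $(w_1,w_2)$ with one region per derivation step. Finally, since $M$ is Adian and $w \in (X\cup X^{-1})^*$ need not be positive, I cannot invoke Lemma \ref{no folding} for $SC(w)$ directly; instead I would argue that the map from $\mathscr{S}$ into $SC(w)$ sending each region to the corresponding 2-cell is injective on cells because each 2-cell of $SC(w)$ in the bounded region is crossed exactly once by the push-across process (a cell is removed from the ``unprocessed'' region as soon as the moving transversal passes it), and injectivity on vertices and edges then follows from planarity of the bounded region inside the (embedded-in-the-Cayley-complex) $SC(w)$.

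The main obstacle I anticipate is precisely this last embedding claim. In Proposition \ref{equal} one had $w_1$ positive and could appeal to Lemma \ref{no folding}, which says no folding ever occurs in $SC(w_1)$, giving the embedding almost for free. Here $w$ is an arbitrary word, $SC(w)$ may well involve foldings in its construction, and two a priori distinct cells of the abstract $S$-diagram $\mathscr{S}$ could conceivably map to the same cell of $SC(w)$. I would handle this by working inside the region $\Delta$ of $SC(w)$ genuinely bounded by the closed curve $w_1 w_2^{-1}$ — using that $SC(w)$ embeds in the Cayley complex of $Gp\langle X|R\rangle$ (as in the proof of Lemma \ref{no cycles 1}), so $w_1 w_2^{-1}$ is a closed curve in a planar-enough complex and $\Delta$ is a genuine disc-like subcomplex — and then checking that the push-across derivation visits each 2-cell of $\Delta$ exactly once, so that $\mathscr{S}$ is simply $\Delta$ with its transversals, hence tautologically embedded. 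The remaining verifications (that $\Delta$'s edges are all positively labeled, that $p,q$ are the unique source and sink, that there are no interior sources or sinks) are routine consequences of Lemma \ref{no cycles 1} and the two-sidedness of 2-cells in an Adian presentation.
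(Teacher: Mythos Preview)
Your approach has a genuine gap in the core geometric step. The push-across argument you borrow from Proposition \ref{equal} worked there because the 2-cells of $SC(w_1)$ (for positive $w_1$) carry a canonical generation numbering coming from the full-$\mathscr{P}$-expansion sequence, and no folding occurs (Lemma \ref{no folding}); that is what lets one push the transversal monotonically from $w_1$ to $w_2$. For an arbitrary $w \in (X\cup X^{-1})^*$ neither of these facts is available inside $SC(w)$: generations are defined relative to the approximation sequence of $SC(w)$, not relative to your subpath labeled $w_1$, and the claim that the region bounded by $w_1w_2^{-1}$ is ``disc-like'' cannot be grounded in planarity, since the Cayley complex of $Gp\langle X|R\rangle$ is not planar in general. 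Simple connectedness is much weaker than ``every closed curve bounds an embedded disc subcomplex,'' so the existence of your $\Delta$ is not established.

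The paper sidesteps all of this with an algebraic shortcut you do not take. From $E$-unitarity, $SC(w)$ embeds in the Cayley complex of $G = Gp\langle X|R\rangle$, so the co-terminal paths give $w_1 \underset{G}{=} w_2$. Now Theorem \ref{embedding} (the Adian/Remmers embedding of the Adian semigroup in its group) immediately yields $w_1 = w_2$ in $Sg\langle X|R\rangle$, hence a regular derivation and an $S$-diagram exist \emph{a priori}, without any geometric analysis of $SC(w)$. The embedding into $SC(w)$ then follows simply because $SC(w)$ is closed under $\mathscr{P}$-expansion: starting from the $w_1$-path and replaying the derivation step by step stays inside $SC(w)$, since each relation-replacement is already present. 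You touched the Cayley-complex embedding but used it only for ``planar-enough,'' rather than to extract the group equality that unlocks Theorem \ref{embedding}; that is the missing idea.
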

\begin{proof}

 Since $M$ is $E$-unitary, $SC(w)$ embeds into the Cayley complex of the group $G=Gp\langle X|R\rangle$. So, the word $w_1w_2^{-1}$ labels a closed path in the Cayley complex. Hence $w_1w_2^{-1}\underset{G}{=}1$. So, $w_1\underset{G}{=}w_2$. It follows from Theorem \ref{embedding} that $w_1=w_2$ in $Sg\langle X|R\rangle$. So, there exists a regular derivation sequence $D$ from $w_1$ to $w_2$ over the presentation $S=Sg\langle X|R\rangle$. The semigroup $S$ embeds into the inverse semigroup $M$, by Proposition \ref{embedding 1}. So the regular derivation sequence sequence $D$ also holds in $M$. The complex $SC(w)$ is closed under elementary $\mathscr{P}$-expansion and folding and therefore all the 2-cells corresponding to this regular derivation sequence $D$ already exist in $SC(w)$ between the paths labeled by $w_1$ and $w_2$. Hence the $S$-diagram corresponding to the regular derivation sequence  $D$ embeds in $SC(w)$.

\end{proof}

In general for any finite inverse semigroup presentation $(X, R)$, and for any word $w\in (X\cup X^{-1})^*$, there are natural birooted graph morphisms for the sequence of approximate graphs given by Stephen's procedure for approximating the birooted Sch\"{u}tzenberger graph $(\alpha, \Gamma(w), \beta)$.
\\ $ (\alpha_1 ,\Gamma_1(w),\beta_1)\to \cdots \to(\alpha_n ,\Gamma_n(w),\beta_n)\to (\alpha_{n+1} ,\Gamma_{n+1}(w),\beta_{n+1})\to \cdots \to$ \\ $ (\alpha, \Gamma(w), \beta)$

It was established in Proposition \ref{equal} that if $M=Inv\langle X|R\rangle$ is an \textit{Adian inverse semigroup} and $w$ is a \textit{positive} word, that is $w\in X^+$, then all of the maps in the above sequence are actually embeddings. We may abuse the notation slightly in this case and for each approximate graph $\Gamma_n(w)$ we denote the initial and terminal vertices simply as $\alpha$ and $\beta$.
\[ (\alpha ,\Gamma_1(w),\beta)\hookrightarrow \cdots \hookrightarrow(\alpha ,\Gamma_n(w),\beta)\hookrightarrow (\alpha ,\Gamma_{n+1}(w),\beta)\hookrightarrow \cdots \hookrightarrow(\alpha, \Gamma(w), \beta)\]

Recall that for a positive word $w\in X^+$, a \textit{transversal} of an approximate Sch\"{u}tzenberger complex $(\alpha, \Gamma_k(w),\beta)$ is defined to be any positively labeled path in $\Gamma_k(w)$ from $\alpha$ to $\beta$. Before proving the main theorem of this section we introduce the following definition of \textit{$n$-th generation transversal} of a Sch\"{u}tzenberger complex.

\begin{definition}
An \textit{$n$-th generation transversal} of the Sch\"{u}tzenberger complex $(\alpha, \Gamma(w),\beta)$  is a positively labeled path from $\alpha$ to $\beta$ that can be read in the approximate complex $(\alpha,\Gamma_n(w),\beta)$ but cannot be read in $(\alpha,\Gamma_{n-1}(w),\beta)$, for some $n\in \mathbb{N}$.
\end{definition}

When studying certain problems involving an inverse monoid given by a finite presentation $(X, R)$, for example when considering the word problem for $M = Inv\langle X|R \rangle$, it is natural to first ask whether it might happen to be the case that all of the Sch\"{u}tzenberger graphs of $M$ are finite. The following theorem, our main theorem for this paper, shows that this question can be reduced to the question of whether or not all Sch\"{u}tzenberger graphs of \textit{positive} words are finite.

\begin{theorem}\label{finite complexes}
Let $M=Inv\langle X|R\rangle$ be a finitely presented Adian inverse semigroup. Then the Sch\"{u}tzenberger complex of $w$ is finite for all words $w\in (X\cup X^{-1})^*$ if and only if  the Sch\"{u}tzenberger complex of $w'$ is finite for all positive words $w'\in X^+$.
\end{theorem}

\textbf{Idea of the proof:} We assume that the Sch\"{u}tzenberger graph of every positive word is finite and we let $w$ be an arbitrary word, $w\in (X\cup X^{-1})^{*}$. We will use induction on the number of edges in the Munn tree $MT(w)$ to prove that, the Sch\"{u}tzenberger complex of $w$ is finite. The essential part of the proof involves realizing $SC(w)$ as the limit of a sequence of finite complexes, via the procedure of Stephen's $\mathscr{P}$-expansion. We begin with a finite inverse graph (complex) $S$ that is closed under $\mathscr{P}$-expansions relative to $\langle X| R\rangle$. To the complex $S$ we attach a single edge $e$ that is labeled by some letter, say $a\in X$. The resulting complex $S_1 = S\vee \{e\}$ will in general not be closed under $\mathscr{P}$-expansions. In a process similar to Stephen's full $\mathscr{P}$-expansion construction, we define a sequence of finite complexes $S_1, S_2, \dots$ that converges in the limit to $SC(w)$. Our theorem will be proved if we can show that this limit is in fact a finite complex. Equivalently, we must prove that the sequence $S_1, S_2, \dots$ stabilizes after finitely many steps at some $S_k$.

\begin{proof}
We assume that $SC(w')$ is finite for all positive words $w'\in X^+$,. Let $w$ be an arbitrary word, $w\in (X\cup X^{-1})^*$. We will show that $SC(w)$ is  finite, by applying induction on the number of edges in $MT(w)$.

For the base of our induction, we suppose that $MT(w)$ consists of only one edge, labeled say by $a\in X$. Then by our assumption about positive words, $SC(w) = SC(a)$ is finite.

For our induction hypothesis, we assume that $SC(w_0)$ is finite for all words $w_0\in (X\cup X^{-1})^*$, whose Munn tree consist of $k$ edges. Let $w\in (X\cup X^{-1})^*$ be a word such that $MT(w)$ consists of $k+1$ edges. We will show that $SC(w)$ is finite.

Let $\alpha$ be an extremal vertex of $MT(w)$ (i.e., a \textit{leaf} of the tree $MT(w)$), and let $e$ be the edge of $MT(w)$ that connects $\alpha$ to the remainder of the $MT(w)$. Assume that $e$ is a positively labeled edge with initial vertex $\alpha$ and terminal vertex $\beta$. The case when $e$ is negatively labeled  is dual. The sub-tree obtained by removing the edge $e$ consists of $k$ edges; we denote this sub-tree by $T$ for our future reference. Note that the tree $T$ is in fact the Munn tree of some word $z \in (X\cup X^{-1})^*$. That is, $T = MT(z)$ and there are $k$ edges in $MT(z)$.  By the induction hypothesis, the Sch\"{u}tzenberger complex $S$ generated by $T$, i.e., $ S = SC(z)$, is a finite complex. There exists a graph morphism $\phi:T\to S$, and so we may regard the vertex $\beta$ of $T$ as a vertex in the complex $S = SC(z)$.  We reattach the edge $e$ to the vertex $\beta$ of $S$ and denote the resulting finite complex by $S_1$, (see Figure \ref{fig301}.) The finite complex $S = SC(z)$ was obtained from the subtree $T$ by sewing on relations from $R$, and since we reattached the edge $e$ in $S_1 = S\vee \{e\}$, then naturally $S_1$ may be regarded as an approximation to $SC(w)$.  While the Sch\"{u}tzenberger complex $S = SC(z)$ is closed under $\mathscr{P}$-expansion, the complex $S_1$ is not necessarily closed under $\mathscr{P}$-expansion. In particular, it is possible that there may be one or more relations $(r, s)\in R$ such that $r$ labels a path in $S_1$ that begins at vertex $\alpha$ and the other side of the relation, $s$, is not read in $S_1$ at $\alpha$. It is clear that the closure of $S_1$ under $\mathscr{P}$-expansion over $\langle X| R\rangle$ is the Sch\"{u}tzenberger complex $SC(w)$.

\begin{figure}[h!]
\centering
\includegraphics[trim = 0mm 0mm 0mm 0mm, clip,width=1.8in]{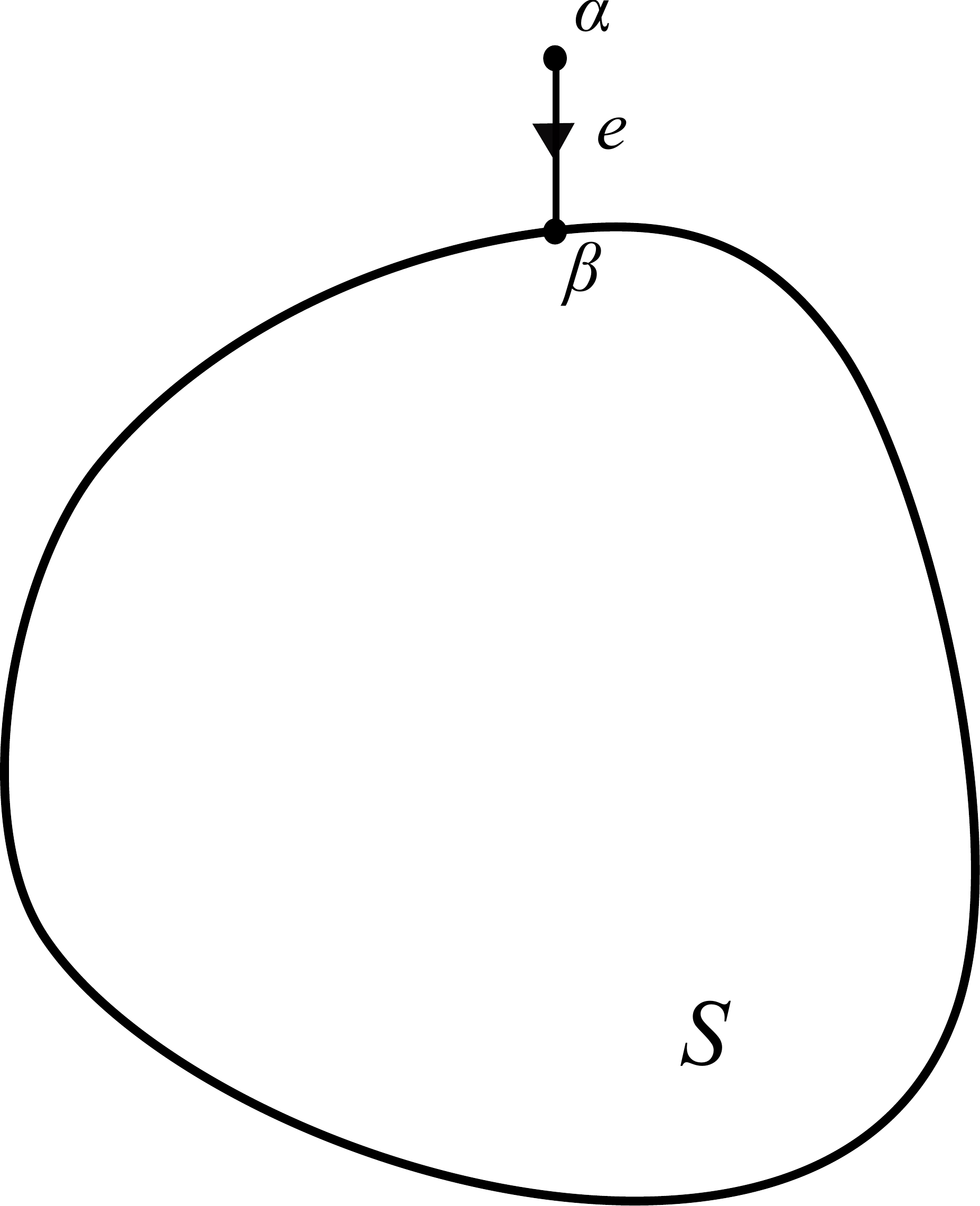}
\caption{$S_1=S\vee \{e\}$ }
\label{fig301}
\end{figure}

If the edge $e$ (labeled say, by $a$) gets immediately identified by folding to an $a$-labeled edge of the finite complex $S$, then we are done, because in that case we would have $SC(w)=S$, which was assumed to be a finite complex. So, we assume that the edge $e$ does not get immediately identified by folding with any of the edges in the finite complex $S$. We extend the edge $e$ to all possible maximal positively labeled paths in $S_1$. There are only finitely many maximal positively labeled paths in $S_1$ with initial edge $e$ because $S_1$ is a finite complex that has no positively labeled cycles (by Lemma \ref{no cycles 1}). We assume that  these paths are labeled by $w_1,w_2,...,w_n$, where $w_i\in X^+$ for $1\leq i\leq n$. Each such $w_i$ labels a path from $\alpha$ to some vertex $\beta_i$ of $S$. (See Figure \ref{fig302}).

\begin{figure}[h!]
\centering
\includegraphics[trim = 0mm 0mm 0mm 0mm, clip,width=2in]{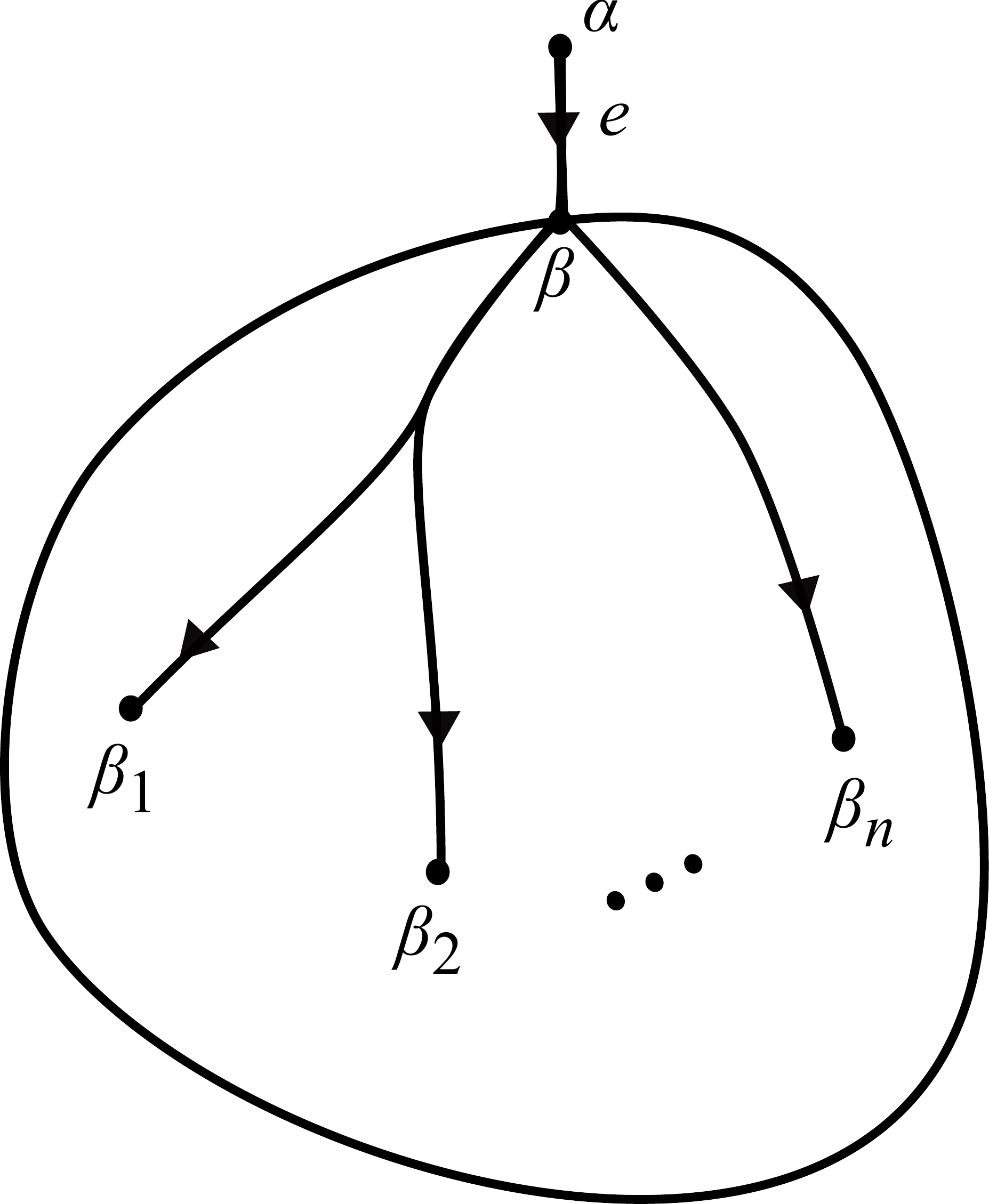}
\caption{$S_1$ with maximal positively labeled paths from $\alpha$ to $\beta_i$. }
\label{fig302}
\end{figure}

 In order to complete $S_1$ under elementary $\mathscr{P}$-expansion and folding, we first attach $SC(w_i)$ to the path labeled by $w_i$ in $S_1$, for all $i$, and denote the resulting finite complex by $S'_1$. (See Figure \ref{fig303}.) Each complex $SC(w_i)$ for $1\leq i\leq n$, is finite since each $w_i\in X^+$. Thus we obtain $S'_1$ by attaching to $S_1$ finitely many complexes, each of which is finite, and so $S'_1$ is finite.

\begin{figure}[h!]
\centering
\includegraphics[trim = 0mm 0mm 0mm 0mm, clip,width=2in]{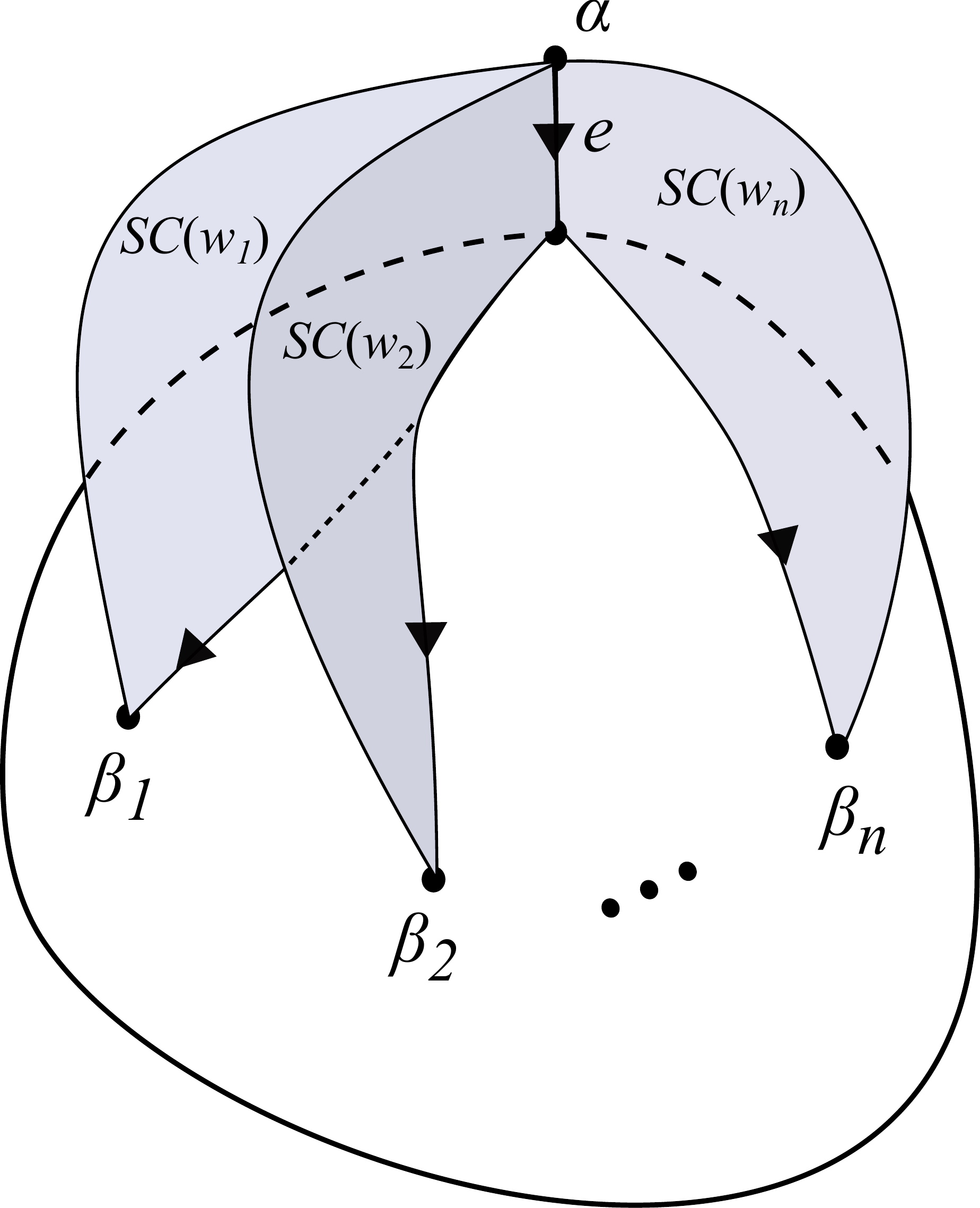}
\caption{$S_1'$ is obtained from $S_1$ by sewing on $SC(w_i)$ to each maximal positive path, with label $w_i$, that begins at $\alpha$ in $S_1$.}
\label{fig303}
\end{figure}

The complex $S'_1$ is not necessarily determinized. That is, as a consequence of attaching the complexes $SC(w_i)$, there may now be vertices along the paths in $S'_1$ labeled by $w_i$ at which there exist two or more edges labeled by the same letter and so we must perform foldings to obtain a determinized graph (complex). We denote the determinized form of $S'_1$ by $S_2$. Since $S'_1$ is finite, then so is its determinized quotient $S_2$. Note however, as consequence of folding $S'_1$ to $S_2$, that the complex $S_2$ may not be closed under $\mathscr{P}$-expansions.

 And so, we iterate the procedure. In general, the complex $S_k$ may not be closed under $\mathscr{P}$-expansion. That is, there may be a relation $(r, s)\in R$ and a path $p$ labeled by $r$ between some two vertices $v$ and $v'$ in $S_k$ such that the other side of the relation, $s$, does not label a path in $S_k$ between $v$ and $v'$. We refer to such a path $p$ in $S_k$ as an \textit{unsaturated} path.
 
 Note that in Stephen's procedure we would at this stage simply attach a path labeled by $s$ between $v$ and $v'$ as one step in the $\mathscr{P}$-expansion. In our setting it turns out, however, that we can ``speed up" Stephen's procedure. We will show that every such unsaturated path $p$ can be extended to a positively labeled path that begins at the vertex $\alpha$. Such a path can itself be extended to a maximal positively labeled path that begins at $\alpha$. So, instead of merely attaching a single cell along the unsaturated path $p$ that is labeled by $r$, we instead read the label, say $w_j$, of the maximal positively labeled path beginning at $\alpha$ that contains $p$ as a subpath and we attach the (finite) complex $SC(w_j)$ along this path. Thus in this one step we are attaching not only the one cell along the unsaturated path $p$, but also are attaching all cells that would arise from $\mathscr{P}$-expansions on the maximal path labeled by $w_j$.

 Our iterative procedure for constructing a sequence $\{S_n\}$ of complexes can be summarized as follows. Suppose that $S_k$ has been constructed. We first look for all positive words $w_i$ that label a maximal positive path in $S_k$ that starts at the vertex $\alpha$ and that does not label a path starting at $\alpha$ in $S'_{k-1}$. We obtain $S'_k$ by attaching $SC(w_i)$ to each such maximal positive path in $S_k$. Then we obtain $S_{k+1}$ by determinizing $S'_k$. Thus we obtain a sequence of finite complexes \{$S_n\}$ that has $SC(w)$ as its limit.

\[ \begin{array}{ccccc} S_1 \rightarrow \cdots  \rightarrow   S_k & \xrightarrow{\hspace*{3.5cm}} &  S'_k  &  \xrightarrow{\hspace*{2.8cm}} & S_{k+1}  \rightarrow  \cdots \\[-10pt]
& \parbox[t]{3.5cm}{\scriptsize For each maximal positive
path with label $w_i$ that
starts at $\alpha$ and does not
exist in $S'_{k-1}$, attach $SC(w_i).$} &&
\parbox{2.8cm}{\scriptsize Fold. (Determinize.)} & \\ &&&&
\end{array} \]

A question is whether we ever reach a graph $S_k$ in the procedure such that all maximal positive paths that can be read at $\alpha$ in $S_k$ can already be read in $S'_{k-1}$ at $\alpha$. If this happens, then the sequence of complexes will stabilize at $S_k$. Equivalently, we can ask whether we ever reach a complex $S_k$ such that every maximal positive path in $S_k$ is closed under $\mathscr{P}$-expansion. (We say that a maximal positive path $p$ is closed under $\mathscr{P}$-expansion if for every relation $(r, s)\in R$, and for every subpath of $p$ labeled by $r$ between vertices $v$ and $v'$, then the other side of the relation, $s$, already labels a path between $v$ and $v'$ in $S_k$).

To answer this question, we analyze how the process of folding $S'_k$ to $S_{k+1}$ affects positively labeled paths that already exist in $S'_k$ and how new positively labeled paths may be created in $S_{k+1}$ as a consequence of the folding process. We first note, for example, that when we fold $S'_1$ to $S_2$ that each of the attached complexes $SC(w_i)$ that we attached to $S_1$ will be embedded in $S_2$ after the folding process. This follows from a fact (see Stephen, \cite{SG}) about  $E$-unitary semigroups: If $M$ is $E$-unitary and $w$ is any word, and a word $w_i$ can be read along some path of the Sch\"{u}tzenberger graph $\Gamma(w)$, then the entire Sch\"{u}tzenberger graph $\Gamma(w_i)$ will occur as an embedded subgraph of $\Gamma(w)$ along that path. Since the theorem we are proving assumes that the semigroup $M$ has an Adian presentation, we know from the main theorem of \cite{Eu} that $M$ is $E$-unitary. Thus, each $SC(w_i)$ embeds in $S_2$. Likewise, the original graph $S$ is actually the Sch\"{u}tzenberger graph $\Gamma(z)$, where the word $z$ labels a path in $S_1$, and so we know that the original graph $S$ also remains embedded as a subgraph of $S_2$.  In other words, no two vertices of any one graph $SC(w_i)$ will become identified with each other and no two vertices of the original graph $S$ will become identified with each other in the folding process that takes $S'_1$ to $S_2$. Two vertices of $S'_1$ will become identified in the folding process only if one of the vertices belongs to the original $S$ and the other vertex belongs to one of the attached $SC(w_i)$,  or if the two vertices belong to $SC(w_i)$ and $SC(w_j)$, with $i\neq j$. Everything that we just said about the process of folding $S'_1$ to $S_2$ holds as well for the process of folding $S'_k$ to $S_{k+1}$. The original complex $S$ and each complex $SC(w_i)$, attached at any step of the iteration, will be embedded as subcomplexes of $S_{k+1}$. The interest is in what new paths may be formed as a result of folding $S'_k$ to $S_{k+1}$.

\textbf{Claim:} \textit{ Suppose that $SC(w_i)$ is one of the complexes that was attached to $S_k$ to form $S'_k$. Suppose, in the process of folding $S'_k \rightarrow S_{k+1}$, that a vertex $\gamma$ of $SC(w_i)$ gets identified, as a consequence of folding, with a vertex $\gamma'$ of the original complex $S$. Then we claim that every positively labeled path in $SC(w_i)$ from $\gamma$ to $\beta_i$ will get identified by the folding process with a path in $S$ from the vertex $\gamma'$ to $\beta_i$.  Thus, every maximal positive path $p$ that begins at the vertex $\alpha$ in $S_{k+1}$ and cannot be read beginning at the vertex $\alpha$ in $S'_k$  will factor uniquely as $p = p_1p_2$, where $p_1$ is a path in some $SC(w_i)$s and $p_2$ is a path in the original complex $S$.}

\begin{proof}[Proof of Claim:]
Since $\gamma$ gets identified with $\gamma'$ through folding, we know (Stephen, \cite{SG}) that in $S'_k$ there is a path from $\gamma$ to $\gamma'$ labeled by a Dyck word $d$ that we may assume is of the form $d=ss^{-1}$, where $s\in (X\cup X^{-1})^*$. Further, we may assume that $s$ is a reduced word. There must be a vertex $\delta$ that lies on the path labeled by $w_i$ (the intersection of $SC(w_i)$ and $S$ in $S'_k$) so that $s$ labels a path in $SC(w_i)$ from $\gamma$ to $\delta$ and $s$ also labels a path in $S$ from $\gamma'$ to $\delta$. (See Figures \ref{fig52} \& \ref{fig53}.)
To prove the above claim we assume that $r_2$ labels an arbitrary positive path in $SC(w_i)$ from $\gamma$ to $\beta_i$. We need to prove that this path gets identified by folding with a path in $S$  that is labeled by $r_2$ from $\gamma'$ to $\beta_i$. The path in $SC(w_i)$ that is labeled by $r_2$ can be extended to an $n$th generation transversal $t$ of $SC(w_i)$, for some $n$. To complete our proof, we apply induction on the generation number, $n$, of the transversal of $t$ of $SC(w_i)$.

Suppose that the path labeled by $r_2$ lies on a 1st generation transversal $t$ of $SC(w_i)$. We assume that $t\equiv r_1r_2$ where $r_1$ labels the sub-path of $t$ from $\alpha$ to $\gamma$ and $r_2$ labels the sub-path of $t$ from $\gamma$ to $\beta_i$. Since $\gamma$ lies on a 1st generation transversal of $SC(w_i)$, we conclude that in the Dyck word path $ss^{-1}$ that goes from $\gamma$ to $\delta$ to $\gamma'$, it must be that either $s$ is a positive word or $s^{-1}$ is a positive word. We examine the following three cases.

\textit{Case 1.} (See Figure \ref{fig51}.) Suppose that the vertex $\delta$ is actually $\beta_i$, the terminal vertex of the path labeled by $w_i$, and suppose the Dyck word labeling the path from $\gamma$ to $\gamma'$ is the word $r_2r_2^{-1}$. In this case, the word $r_2$ labels the subpath of the transversal $t$ of $SC(w_i)$ from $\gamma$ to $\beta_i$ and the word $r_2$ also labels a path from $\gamma'$ to $\beta_i$.  These two paths, both labeled by $r_2$, which meet at the vertex $\beta_i$, will fold together so that the path labeled by $r_2$ in $SC(w_i)$ from $\gamma$ to $\beta_i$ gets identified with the path in $S$ from $\gamma'$ to $\beta_i$.  So in this case the statement of the claim obviously holds.

\begin{figure}[h!]
\centering
\includegraphics[trim = 0mm 0mm 0mm 0mm, clip,width=2.5in]{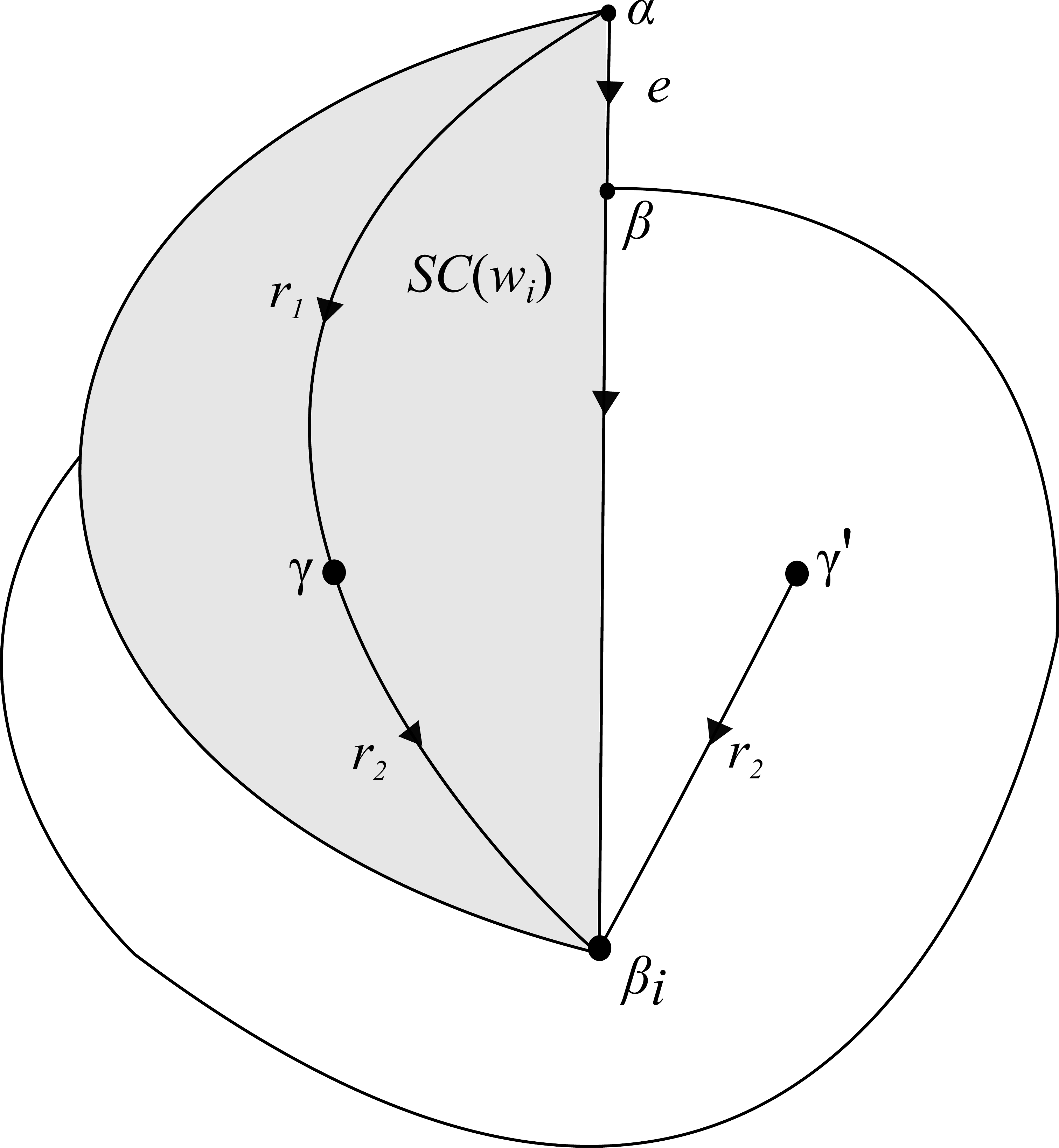}
\caption{Case 1 for the first generation transversals of the proof of the Claim }
\label{fig51}
\end{figure}

\textit{Case 2.} (See Figure \ref{fig52}.) We assume that the Dyck word path from $\gamma$ to $\delta$ to $\gamma'$ is labeled by $r_3r_3^{-1}$, where $r_3$ is a positive word, $r_3\in X^+$. So, the path in $SC(w_i)$ from $\gamma$ to $\delta$ is labeled by $r_3$ and the path in $S$ from $\gamma'$ to $\delta$ is labeled by $r_3$. The oppositely oriented paths labeled by $r_3$, which meet at $\delta$, become identified with each other through folding. We assume that the sub-path from $\delta$ to $\beta_i$, of the maximal path labeled by $w_i$, is labeled by $r_4\in X^+$. Obviously, the path labeled by $r_4$ from $\delta$ to $\beta_i$ is in $SC(w_i)$ and the path labeled by $r_3$ from $\gamma$ to $\delta$ is also in $SC(w_i)$. Hence the path labeled by $r_3r_4$ from $\gamma$ to $\beta_i$ is in $SC(w_i)$. The positive words $r_2$ and $r_3r_4$ label two co-terminal paths in $SC(w_i)$. So, by Lemma \ref{bigon} an $S$-diagram corresponding to the pair of words $(r_2, r_3r_4)$ embeds in $SC(w_i)$. This $S$-diagram also embeds in $S$, because $S$ contains a path labeled by one side of this $S$-diagram, (namely, the path from $\gamma'$ to $\beta_i$ in $S$ that is labeled by $r_3r_4$), and $S$ is closed under elementary $\mathscr{P}$-expansion. So, the two $S$-diagrams corresponding to the pair of words $(r_2,r_3r_4)$ get identified with each other and our claim holds in this case. That is, the path in $SC(w_i)$ labeled by $r_2$ from $\gamma$ to $\beta_i$ gets identified with a path in $S$ from $\gamma'$ to $\beta_i$.

\begin{figure}[h!]
\centering
\includegraphics[trim = 0mm 0mm 0mm 0mm, clip,width=2.5in]{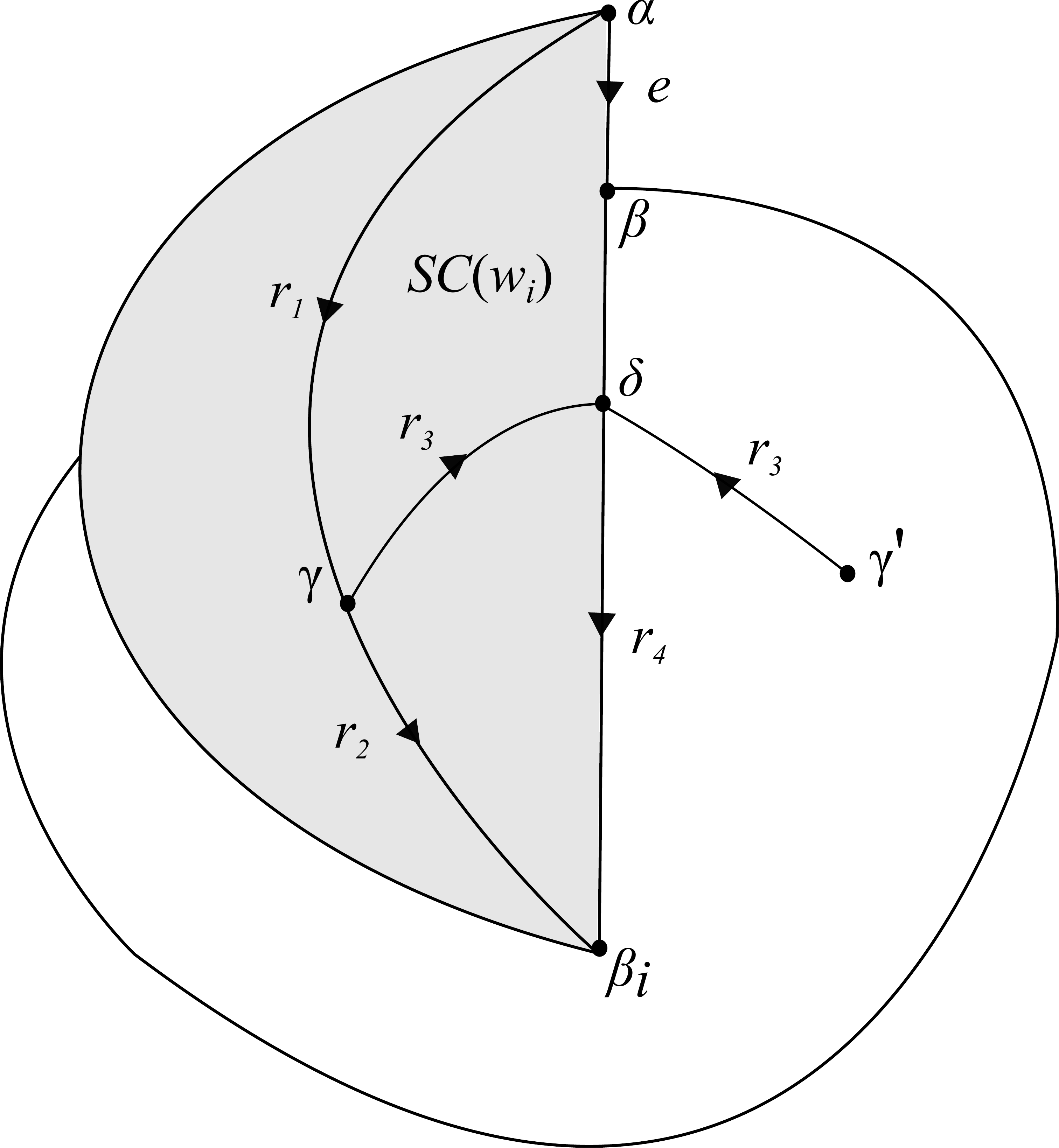}
\caption{ Case 2 for the first generation transversals of the proof of the Claim}
\label{fig52}
\end{figure}

\textit{Case 3.} (See Figure \ref{fig53}.) We assume that the Dyck word path from $\gamma$ to $\delta$ to $\gamma'$ is labeled by $r_3^{-1}r_3$, where $r_3$ is a positive word. So, the path in $SC(w_i)$ from $\delta$ to $\gamma$ is labeled by $r_3$ and the path in $S$ from $\delta$ to $\gamma'$ is labeled by $r_3$. The oppositely oriented paths labeled by $r_3$, which meet at $\delta$, become identified with each other through folding.  Now we have the path in $SC(w_i)$ labeled by $r_3r_2$  from $\delta$ to $\beta_i$ in $SC(w_i)$. Again, we let the positive word $r_4$ be the label of the sub-path of the maximal path labeled by $w_i$ from $\delta$ to $\beta_i$. The words $r_3r_2$ and $r_4$ label two co-terminal paths in $SC(w_i)$. So, by Lemma \ref{bigon} an $S$-diagram corresponding to the pair of words $(r_3r_2,r_4)$ embeds in $SC(w_i)$. This $S$-diagram also embeds in $S$, because $S$ contains a path labeled by one side of this $S$-diagram (namely, the path labeled by $r_4$) and $S$ is closed under elementary $\mathscr{P}$-expansion. Hence these two $S$-diagrams get identified with each other through folding. In particular, the path in $SC(w_i)$ labeled by $r_2$ from $\gamma$ to $\beta_i$ gets identified with a path in $S$ from $\gamma'$ to $\beta_i$, and so our claim follows in this case as well. This concludes the base case of the inductive proof of the claim.

\begin{figure}[h!]
\centering
\includegraphics[trim = 0mm 0mm 0mm 0mm, clip,width=2.5in]{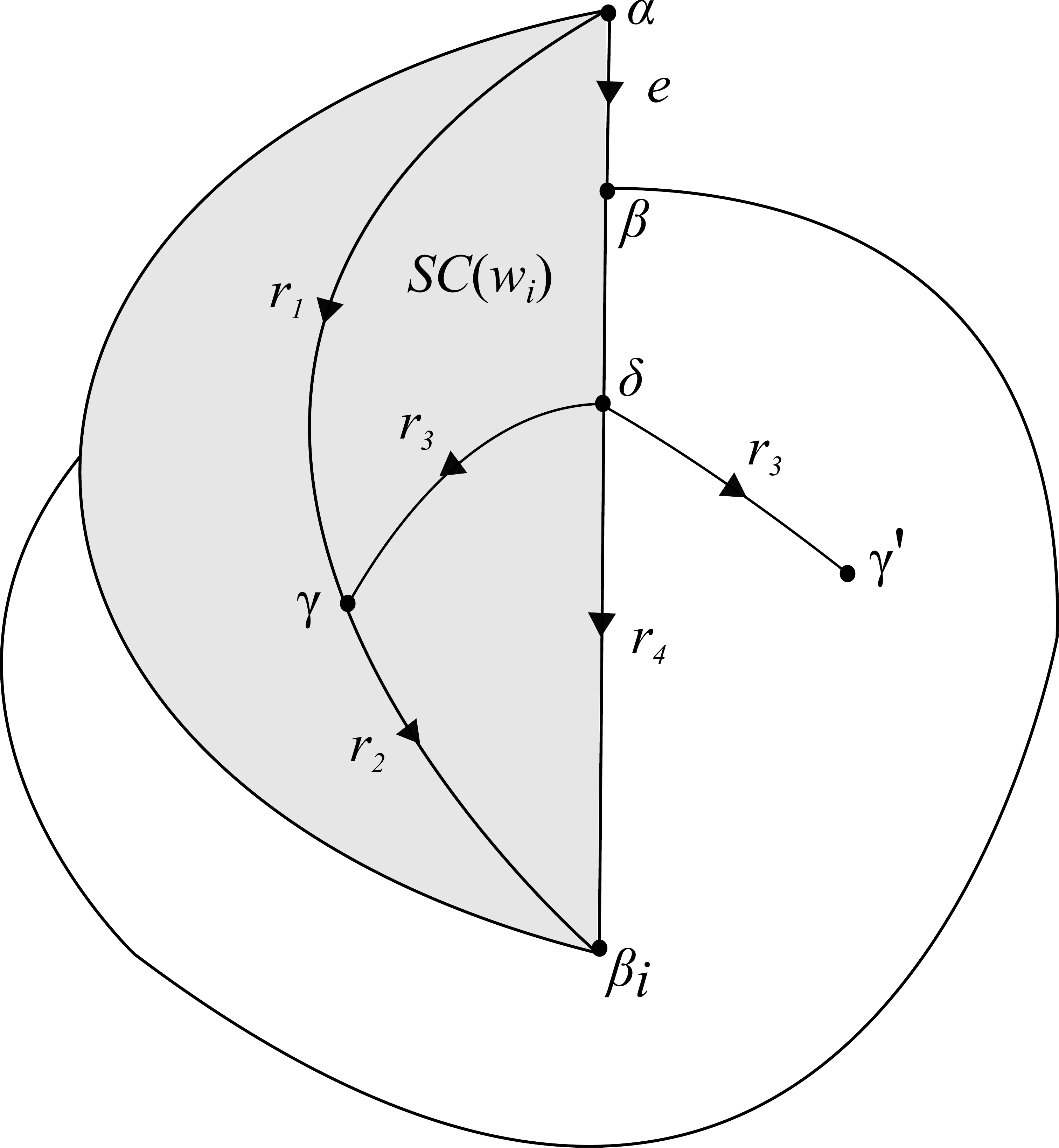}
\caption{ Case 3 for the first generation transversals of the proof of the Claim}
\label{fig53}
\end{figure}

We assume that our claim is true for all paths from $\gamma$ to $\beta_i$ that lie along a $(n-1)$-st generation transversal of $SC(w_i)$. We prove that our claim is true for all $n$-th generation transversals of $SC(w_i)$ as well.

Suppose that an arbitrary path from $\gamma$ to $\beta_i$ is labeled by the positive word $s_2$ and suppose that this path extends to an $n$-th generation transversal $t$ of $SC(w_i)$ . Let $t\equiv s_1s_2$ where $s_1$ labels the sub-path of $t$ from $\alpha$ to $\gamma$ and $s_2$ labels the sub-path of $t$ from $\gamma$ to $\beta_i$.  Again, since the vertex $\gamma$ in $SC(w_i)$ gets identified with with the vertex $\gamma'$ in $S$ through folding, we know that there is a Dyck word $ss^{-1}$ that labels a path from $\gamma$ to $\gamma'$. Since $\gamma$ lies on an $n$-th generation transversal of $SC(w_i)$, the Dyck word path $ss^{-1}$ must pass through some vertex $\delta$ of $SC(w_i)$ that lies on an $(n-1)$-st generation transversal of $SC(w_i)$. We examine the following three cases.

 \textit{Case 1.} Suppose the vertex $\delta$ is the vertex $\beta_i$ and we read the Dyck word $s_2s_2^{-1}$ in $S'_k$ from $\gamma$ to $\gamma'$.  Then we fold the oppositely oriented paths labeled by $s_2$. So in this case it is obvious that that path in $SC(w_i)$ labeled by $s_2$ gets identified with the path in $S$ from $\gamma'$ to $\beta_i$.

\textit{Case 2.} (See Figure \ref{fig54}.)   Suppose $q$ denotes an $(n-1)$-st generation transversal of $SC(w_i)$ and the vertex $\delta$ lies on the transversal $q$. Assume also that $\delta$ has already been identified with a vertex of $S$ as a consequence of folding along the Dyck word $ss^{-1}$. In this case (Case 2), we assume that the portion of the Dyck word path from $\gamma$ to $\delta$ is labeled by a positive word $s_3\in X^+$. Since $\delta$ has already been folded and identified with a vertex of $S$, we have the Dyck word $s_3s_3^{-1}$ labeling a path from $\gamma$ to $\delta$ to $\gamma'$ in the partially folded $S'_k$. The path in $SC(w_i)$ labeled by $s_3$ from $\gamma$ to $\delta$ can be extended along the $(n-1)$-st generation transversal $q$ to the vertex $\beta_i$. We assume that this path is labeled by $s_3s_4\in X^+$, where $s_4\in X^+$ labels the sub-path of $q$ from $\delta$ to $\beta_i$. By the induction hypothesis the sub-path of $q$ from $\delta$ to $\beta_i$ gets identified with a path in $S$. Thus we have a path in $SC(w_i)$ labeled by $s_3s_4$ from $\gamma$ to $\beta_i$ and we also have a path in $S$ labeled by $s_3s_4$ from $\gamma'$ to $\beta_i$.  The positive words $s_2$ and $s_3s_4$ label two co-terminal paths in $SC(w_i)$. So by Lemma \ref{bigon} an $S$-diagram corresponding to the pair of words $(s_2,s_3s_4)$ embeds in $SC(w_i)$. This $S$-diagram also embeds in $S$, because $S$ contains the path labeled by one side of the $S$-diagram (namely, $s_3s_4$) and $S$ is closed under elementary $\mathscr{P}$-expansion. So the two $S$-diagrams corresponding to the pair of words $(s_2,s_3s_4)$ get identified with each other. Hence, the path in $SC(w_i)$ labeled by $s_2$ gets identified with a path in $S$, and the claim holds in this case.

\begin{figure}[h!]
\centering
\includegraphics[trim = 0mm 0mm 0mm 0mm, clip,width=2.5in]{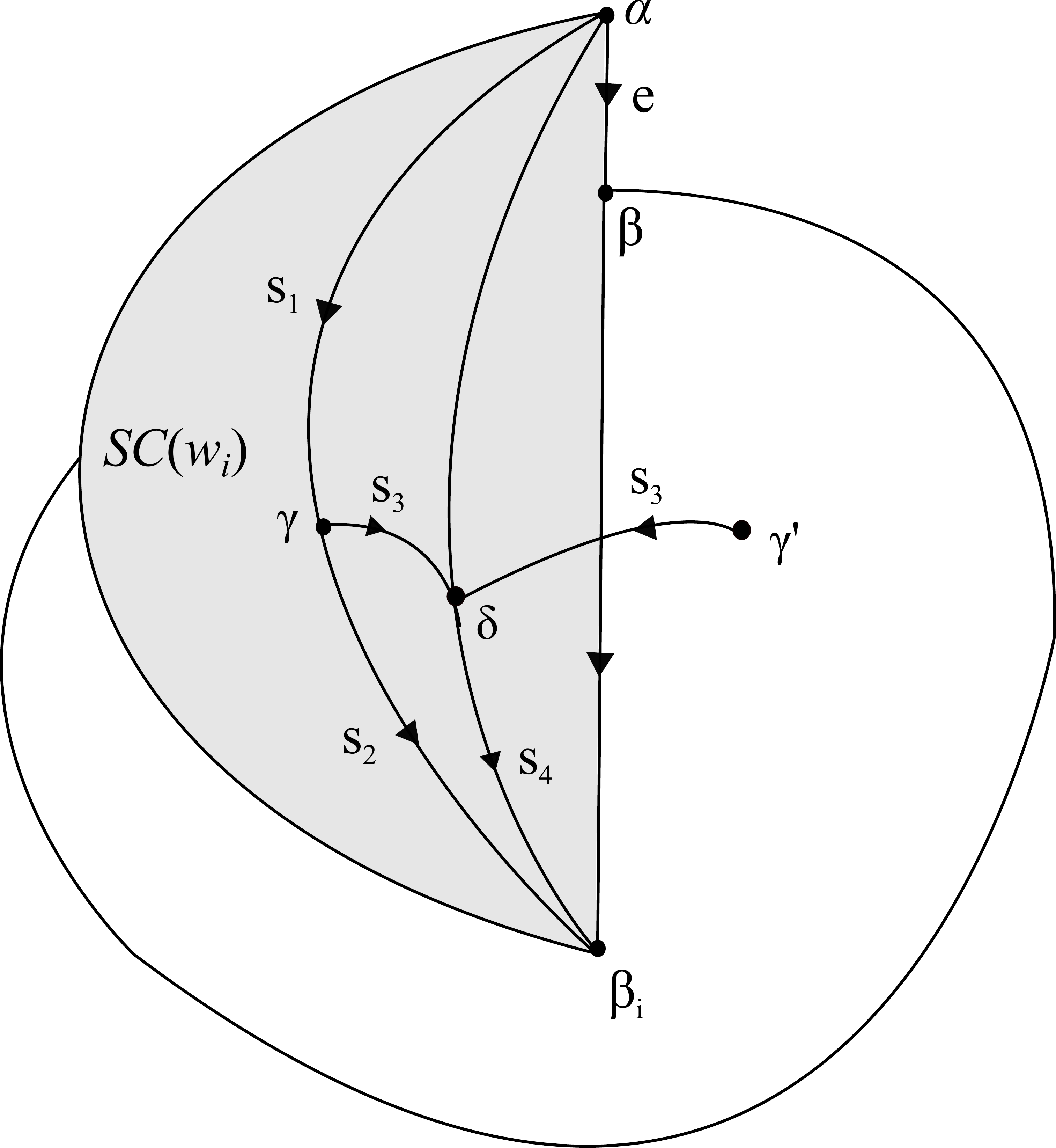}
\caption{Case 2 for the $n$-th generation transversal of the proof of the claim }
\label{fig54}
\end{figure}

\textit{Case 3.} (See Figure \ref{fig55}.) Suppose $q$ denotes an $(n-1)$-st generation transversal of $SC(w_i)$ and $\delta$ is a vertex on the transversal $q$ that has already been identified with a vertex of $S$. We also assume that there exists a path, labeled by $s_3\in X^+$, from $\delta$ to $\gamma$ such that we can read the Dyck word $s_3^{-1}s_3$ from $\gamma$ to $\delta$ to $\gamma'$ in $S'_k$.  We extend the path labeled by $s_3$ from $\delta$ to $\gamma$ along the $n$-th generation transversal $t$ to a positively labeled path (labeled by $s_3s_2\in X^+$) from $\delta$ to $\beta_i$. By the induction hypothesis the sub-path of $q$ from the vertex $\delta$ to the vertex $\beta_i$ (say, labeled by $s_4\in X^+$) is in $S$. Hence $s_3s_2$ and $s_4$ label two co-terminal paths in $SC(w_i)$ and the path labeled by $s_4$ is also in $S$. By Lemma \ref{bigon} an $S$-diagram corresponding to the pair of words $(s_3s_2,s_4)$ embeds in $SC(w_i)$. This $S$-diagram also embeds in $S$, because $S$ contains one side of this $S$-diagram and $S$ is closed under elementary $\mathscr{P}$-expansion. Thus the two $S$-diagrams corresponding to the pair of words $(s_3s_2,s_4)$ get identified with each other and the claim holds in this case as well. This completes the inductive step in the proof of the claim.

\begin{figure}[h!]
\centering
\includegraphics[trim = 0mm 0mm 0mm 0mm, clip,width=2.5in]{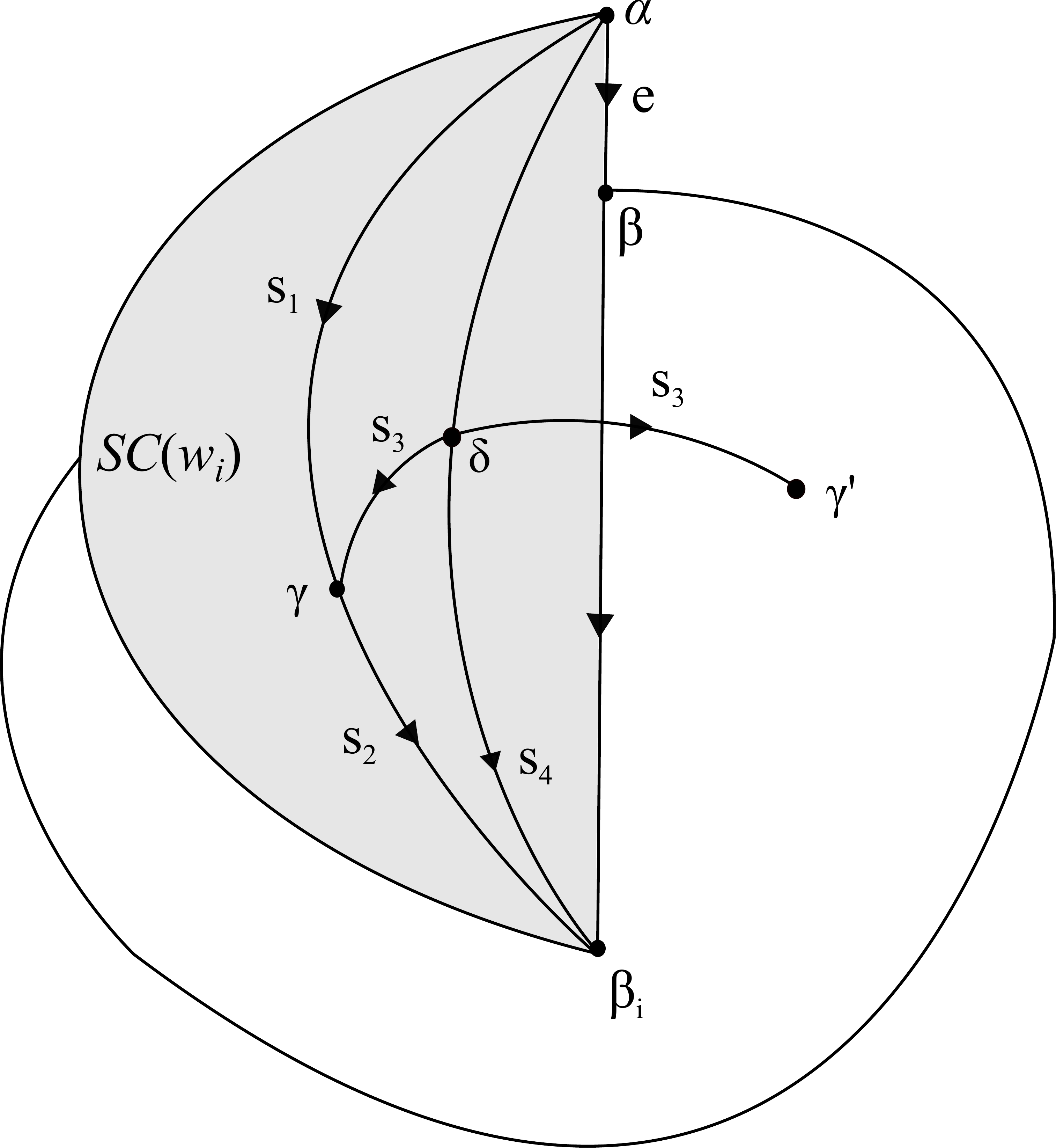}
\caption{ Case 3 for the $n$-th generation transversal of the proof of the claim}
\label{fig55}
\end{figure}

\end{proof}

We perform all possible foldings mentioned in the proof of above claim in $S'_1$ and denote the resulting complex by $S_2$. In this folding process edges of the complex $S$ are folded with the edges of $SC(w_i)$ for some $i$. This folding process may create new maximal positively labeled paths starting from $\alpha$ that did not exist in $S_1'$.

For example, $S_1'$ may contain a path labeled by $r_3^{-1}r_5$ for some $r_5\in X^+$ starting from the vertex $\beta_i$ as shown in the Figure \ref{00}. After folding the path labeled by $r_3r_3^{-1}$, we can read a new positively labeled path starting from $\alpha$ and labeled by $r_1r_5$ which could not be read in $S_1'$.
\begin{figure}[h!]
\centering
\includegraphics[trim = 0mm 0mm 0mm 0mm, clip,width=5in]{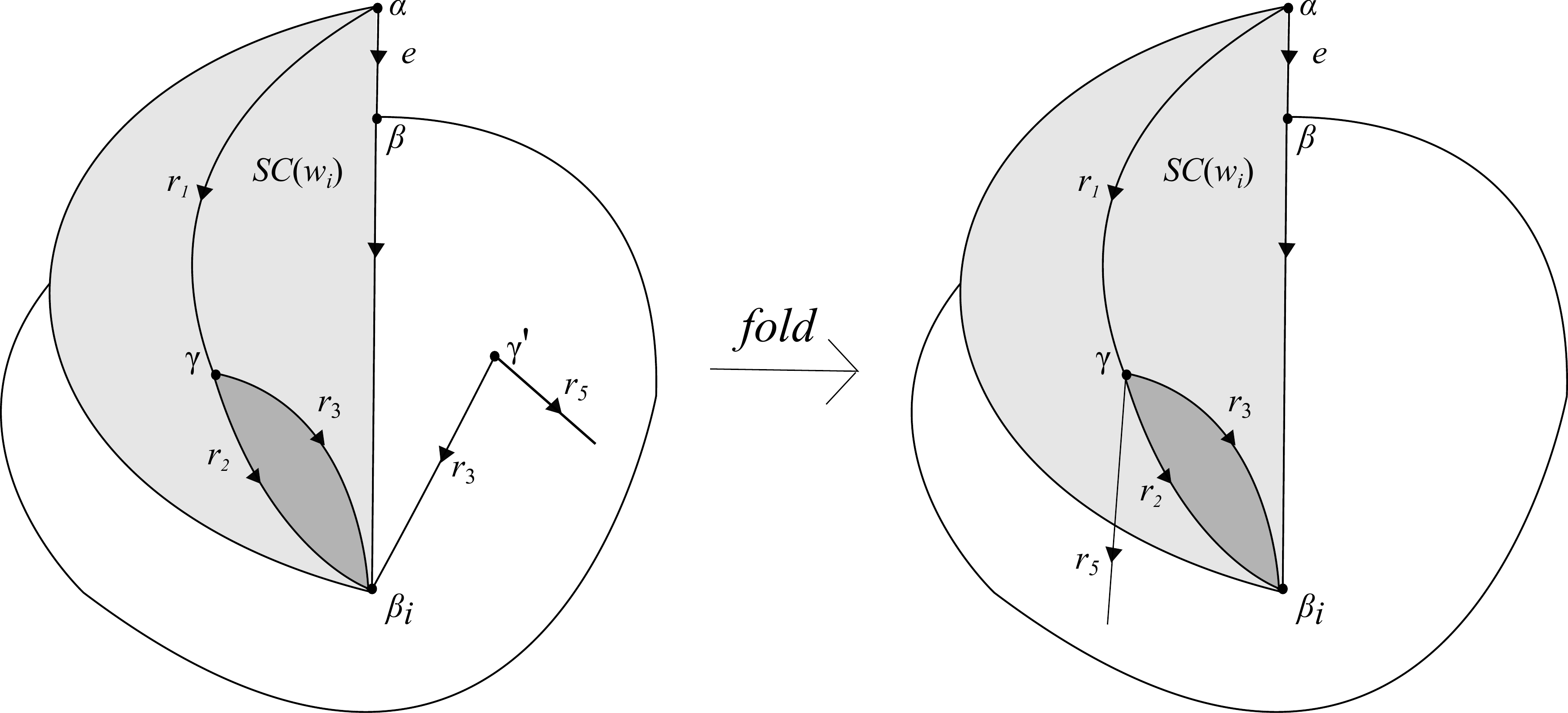}
\caption{An example showing the construction of new maximal positively labeled paths starting from $\alpha$ as a consequence of folding of edges in $S_1'$}
\label{00}
\end{figure}

It is also possible that $S_1'$ may contain a path labeled by $r_4r_5$ for some $r_5\in X^+$ starting from the vertex $\delta$ as shown in the following diagram. After folding the path labeled by $r_4^{-1}r_4$, we can read a new positively labeled path starting from $\alpha$ and labeled by $r_1r_5$ which could not be read in $S_1'$.
\begin{figure}[h!]
\centering
\includegraphics[trim = 0mm 0mm 0mm 0mm, clip,width=5in]{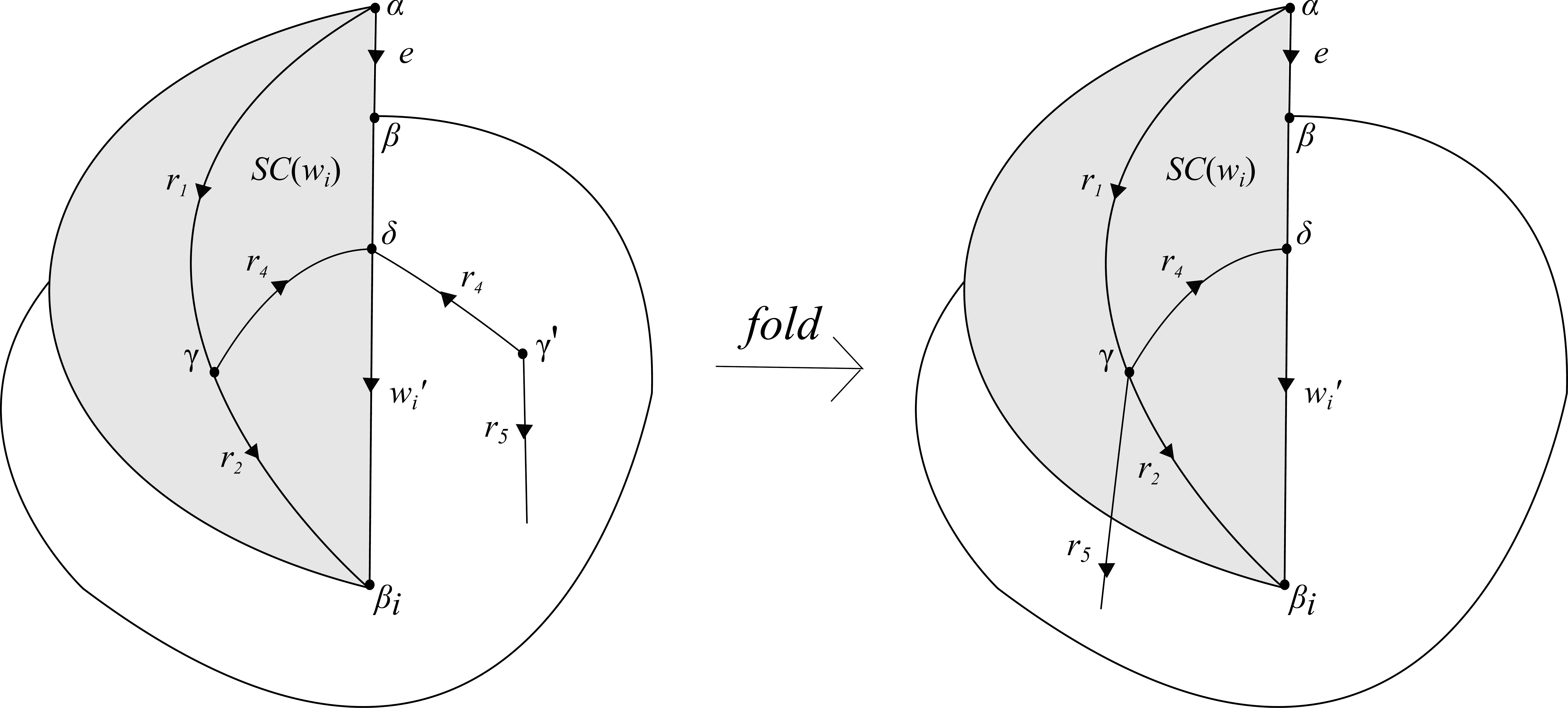}
\caption{ An example showing the construction of new maximal positively labeled paths starting from $\alpha$ as a consequence of folding of edges in $S_1'$}
\label{}
\end{figure}

  Now we consider all those maximal positively labeled paths in $S_2$ that start from $\alpha$ and did not exist in $S_1'$. Since there are only finitely many such paths, we assume that these paths are labeled by  $u_1,u_2,..., u_m$ and for each $i\in \{1,2,...,m\}$ the path labeled by $u_i$ terminates at the vertex $\gamma_i$. Note that each maximal path labeled by $u_i$ must terminate at a vertex $\gamma_i$ that is in the original complex $S$. Then $SC(u_i)$ for each $i$ is finite by hypothesis. So, we attach $SC(u_i)$ at the path labeled by $u_i$ for all $i$ and denote the resulting complex by $S_2'$.

  If the paths labeled by $u_i$ and $u_j$ for some $i\neq j$ are co-terminal, then by the same argument as above $SC(u_i)$ and $SC(u_j)$ are isomorphic to each other as edge labeled graphs. So, they get identified with each other.

If for some $i$, $SC(u_i)$ contains a transversal labeled by $t\in X^+$ such that $\theta(\neq \gamma_i)$ is the first vertex of $u$ that gets identified with a vertex in $S_2$, then the entire positively labeled sub-path of $t$ from $\theta$ to $\gamma_i$ gets identified with a positively labeled path in $S_2$. This can be verified by using the same argument as above.

We perform all possible foldings in $S_2'$ and denote the resulting complex by $S_3$. In this folding process edges of the complex $S$ are folded with the edges of $SC(u_i)$ for some $i$.This folding process may create new maximal positively labeled paths starting from $\alpha$ that did not exist in $S_2'$. So, we repeat this entire process again.

  We keep repeating this process of sewing finite Sch\"{u}tzenberger complexes of positive words and folding unless we reach to the point where we can not create new maximal positively labeled paths starting from $\alpha$ and terminating at a vertex in the  complex $S$. This process of expansion and folding eventually terminates, because $\langle X|R\rangle$ is a finite presentation, therefore an edge labeled by a letter can only be incident with finitely many 2-cells and there are only finitely many edges in $S$ as $S$ is a finite complex. By construction, the resulting complex is deterministic and closed with respect to elementary $\mathscr{P}$-expansions, so it is the Sch\"{u}tzenberger complex of $w$.  So, the Sch\"{u}tzenberger complex of $w$ is finite.

\end{proof}

 The following Corollary follows immediately from the Theorem \ref{finite complexes}, because the Sch\"{u}tzenberger complex, $SC(w)$, for any $\in(X\cup X^{-1})^*$, is finite over a presentation $(X,R)$ if and only if the underlying graph (1-skeleton) of $SC(w)$ is finite over the same presentation.

 \begin{corollary}\label{finite graphs}
Let $M=Inv\langle X|R\rangle$ be a finitely presented Adian inverse semigroup. Then the Sch\"{u}tzenberger graph of $w$ is finite for all words $w\in (X\cup X^{-1})^*$ if and only if  the Sch\"{u}tzenberger graph of $w'$ is finite for all positive words $w'\in X^+$.
\end{corollary}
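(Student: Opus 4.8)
The plan is to reduce the corollary directly to Theorem \ref{finite complexes} by observing that, over a finite presentation, a Sch\"{u}tzenberger complex is finite precisely when its $1$-skeleton is finite. First I would recall that by definition the $1$-skeleton of $SC(w)$ is exactly the Sch\"{u}tzenberger graph $S\Gamma(w)$, and that a $2$-cell of $SC(w)$ is determined by a vertex $v$ together with a relation $(r,s)\in R$ such that both $r$ and $s$ can be read at $v$, the two boundary paths being uniquely determined since $S\Gamma(w)$ is deterministic.

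The first (trivial) implication is that if $SC(w)$ is a finite complex then in particular it has finitely many $0$- and $1$-cells, so $S\Gamma(w)$ is finite. For the converse, suppose $S\Gamma(w)$ is finite, say with vertex set $V$. Since the presentation $\langle X\mid R\rangle$ is finite, $R$ is a finite set, and each $2$-cell of $SC(w)$ is determined by a pair $(v,(r,s))\in V\times R$; hence $SC(w)$ has at most $|V|\cdot|R|$ two-cells and is therefore finite. Thus for every word $u\in(X\cup X^{-1})^{*}$ --- and in particular for every positive word $u\in X^{+}$ --- the complex $SC(u)$ is finite if and only if the graph $S\Gamma(u)$ is finite.

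Combining these equivalences with Theorem \ref{finite complexes} finishes the argument: $S\Gamma(w)$ is finite for all $w\in(X\cup X^{-1})^{*}$ $\iff$ $SC(w)$ is finite for all such $w$ $\iff$ $SC(w')$ is finite for all positive $w'\in X^{+}$ $\iff$ $S\Gamma(w')$ is finite for all positive $w'\in X^{+}$. I do not expect any real obstacle here; the only mild point requiring care is the bookkeeping that a vertex together with a relation determines at most one $2$-cell, which uses that Sch\"{u}tzenberger graphs are deterministic (no two distinct edges with the same label leave, or enter, a common vertex), so that the bound $|V|\cdot|R|$ is legitimate.
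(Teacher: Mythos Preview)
Your proposal is correct and follows essentially the same approach as the paper: the paper states that the corollary follows immediately from Theorem~\ref{finite complexes} because $SC(w)$ is finite if and only if its $1$-skeleton $S\Gamma(w)$ is finite over the given presentation. Your version simply makes this equivalence explicit by bounding the number of $2$-cells by $|V|\cdot|R|$ using finiteness of $R$ and determinism of $S\Gamma(w)$, which is exactly the content the paper leaves implicit.
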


 \section{Some applications of Theorem \ref{finite complexes}}

 \subsection{The word problem for a sub-family of Adian inverse semigroups that satisfy condition $(\star)$}
 \begin{definition}
 We say that a positive presentation $\langle X|R\rangle$ satisfies \textit{condition $(\star)$}, if it satisfies the following two conditions:

\begin{enumerate}
\item No proper prefix of an $R$-word is a suffix of itself or any other $R$-word.

\item No proper suffix of an $R$-word is a prefix of itself or any other $R$-word.

\end{enumerate}
\end{definition}

If $\langle X|R\rangle$ is a finite Adian presentation that satisfies condition $(\star)$, then the set of relations $R$ consists of two types of relations. First, those relations which are of the form $(u,xvy)$, where $u$ and $v$ are $R$-words and $x,y\in X^+$. Second, those relations $(u,v)\in R$ where neither $u$ nor $v$ contains an $R$-word as a proper sub-word. We construct a directed graph corresponding to an Adian presentation that satisfies condition $(\star)$ as follows. We call this graph the \textit{bi-sided graph of the presentation $\langle X|R\rangle $}. The bi-sided graph of a positive presentation is defined as follows.

\begin{definition}
The \textit{bi-sided graph} of the presentation $\langle X|R\rangle$ is a finite, directed, edge-labeled graph, denoted by $BS(X, R)$ satisfying
\begin{itemize}
\item The vertex set of $BS(X, R)$ is the set of all $R$-words.

\item To define the edge set of $BS(X, R)$, let $u, v$ be two $R$-words (where it may happen that $u$ and $v$ are the same $R$-word). There is a directed edge from the vertex $u$ to the vertex $v$ if any of the following three conditions holds:

\begin{enumerate}
\item $(u, xvy)\in R$, for some $x,y\in X^+$. In this case, the directed edge from $u$ to $v$ is labeled by the ordered pair $(x,y)$.

\item $u\equiv xvy $, for some $x,y\in X^+$ and $u$ and $v$ are distinct $R$-words. In this case, the edge from $u$ to $v$ is labeled by the ordered pair $(x,y)$.

\item If $(u,v)\in R$ is such that neither $u$ nor $v$ contains any $R$-word as a proper sub-word, then there is an edge in the bi-sided graph between $u$ and $v$, pointing in both directions. This edge is labeled by $(\varepsilon, \varepsilon)$, where $\varepsilon $ denotes the empty word.
\end{enumerate}
\end{itemize}

\end{definition}

In general, the bi-sided graph $BS(X, R)$ of an Adian presentation may contain closed paths.
\begin{Ex}
The bi-sided graph of the Adian presentation $\langle a,b|aba=b\rangle$ contains a directed closed path (cycle), namely the loop consisting of the single edge labeled by $(a, a)$ from the vertex $b$ to itself. (See Figure \ref{fig40}.) The presentation $\langle a,b|aba=b\rangle$ does not satisfy condition $(\star)$ either, because the $R$-word $aba$ has the letter $a$ as a prefix and as a suffix.
\end{Ex}

\begin{figure}[h!]
\centering
\includegraphics[trim = 0mm 0mm 0mm 0mm, clip,width=4in]{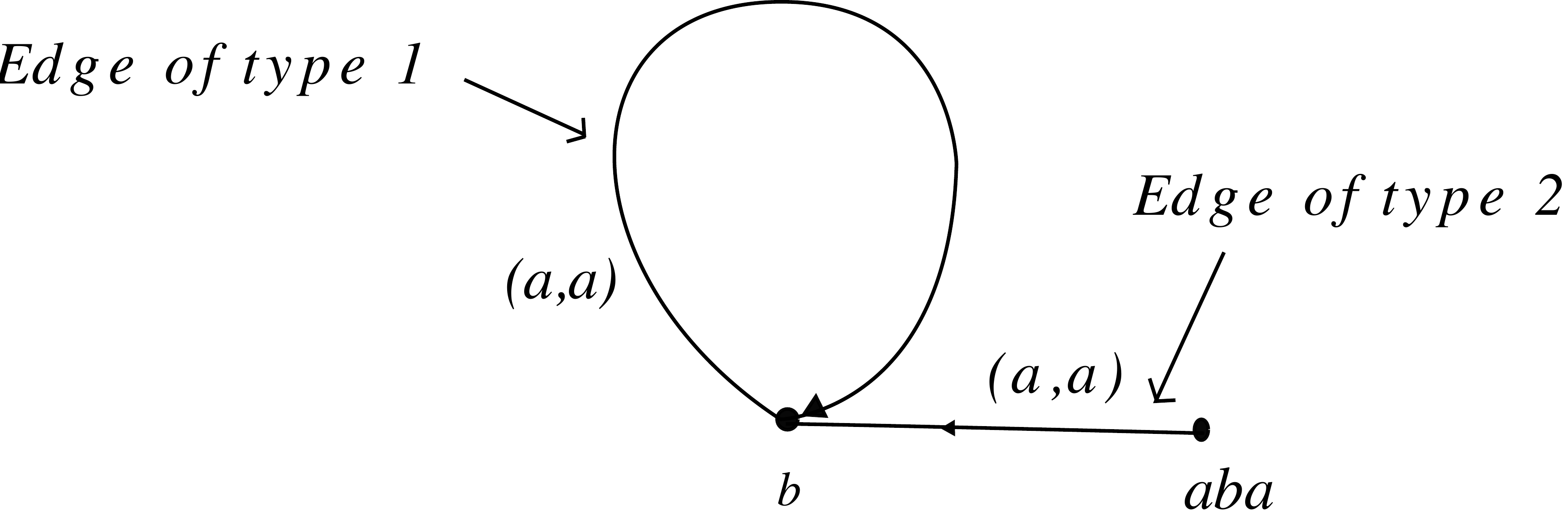}
\caption{The bi-sided graph of $\langle a,b|aba=b\rangle$ contains a cycle, (the loop at vertex $b$).}
\label{fig40}
\end{figure}

\begin{Ex}

The bi-sided graph of the Adian presentation $\langle a,b,c,d,e,f,g,h,i,j,k| a=fbg, a=jck, b=hci, c=de\rangle$ contains an undirected closed path. (See figure \ref{fig41}.) The presentation $\langle a,b,c,d,e,f,g,h,i,j,k| a=fbg, a=jck, b=hci, c=de\rangle$ satisfies condition $(\star)$.

\end{Ex}
\begin{figure}[h!]
\centering
\includegraphics[trim = 0mm 0mm 0mm 0mm, clip,width=3.5in]{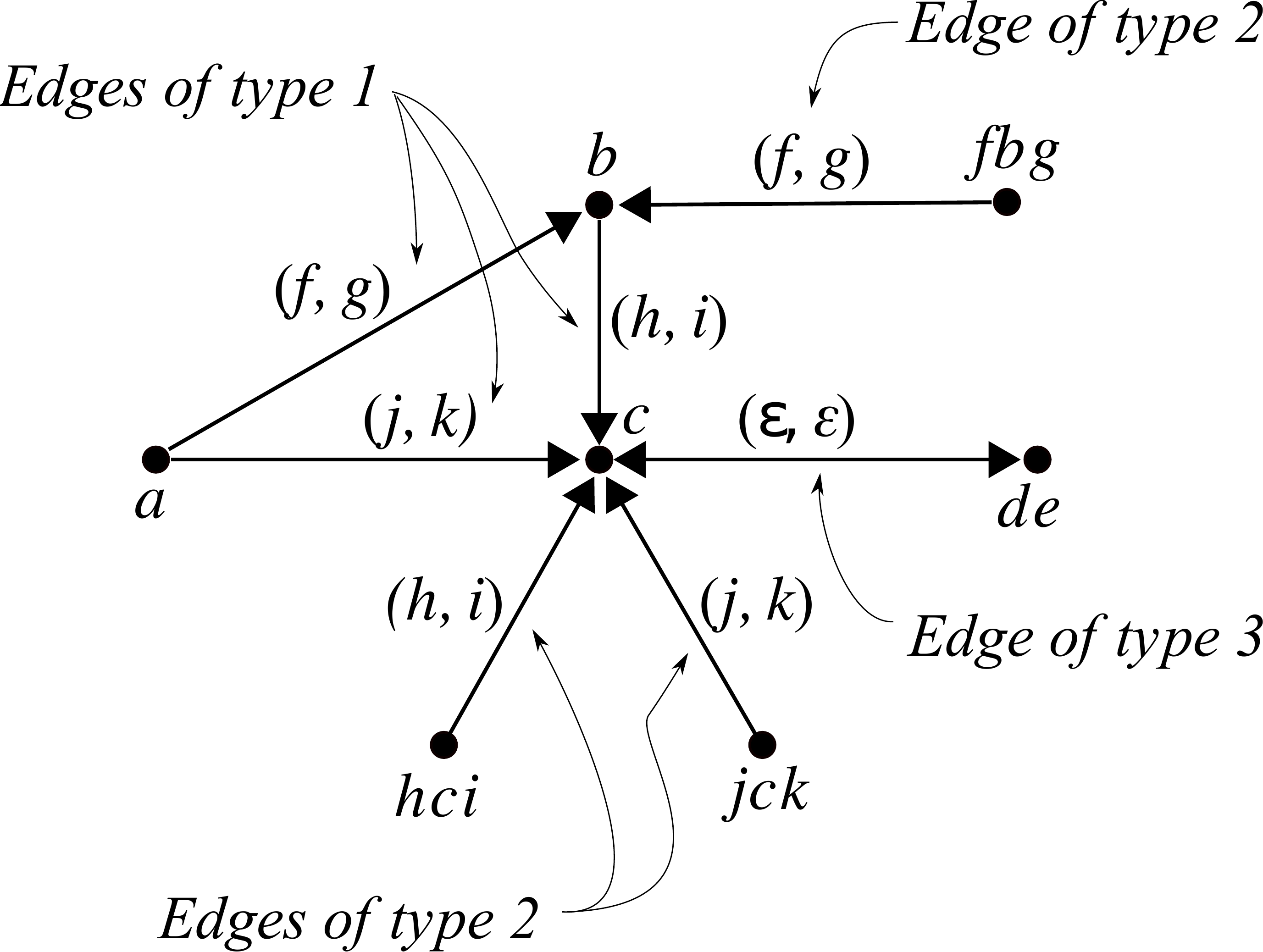}
\caption{The bi-sided graph of $\langle a,b,c,d,e,f,g,h,i,j,k| a=fbg, a=jck, b=hci, c=de\rangle$}
\label{fig41}
\end{figure}

\begin{Ex}

The bi-sided graph of the Adian presentation $\langle a,b,c,d,e,f,g,h,\\ i,j, k,l,m|a=fcg, b=hci, c=de, l=jm^2k \rangle$ is a forest. (See figure \ref{fig42}). The presentation $\langle a,b,c,d,e,f,g,h,i,j, k,l,m|a=fcg, b=hci, c=de, l=jm^2k\rangle$ satisfies condition $(\star)$.

\end{Ex}

\begin{figure}[h!]
\centering
\includegraphics[trim = 0mm 0mm 0mm 0mm, clip,width=4in]{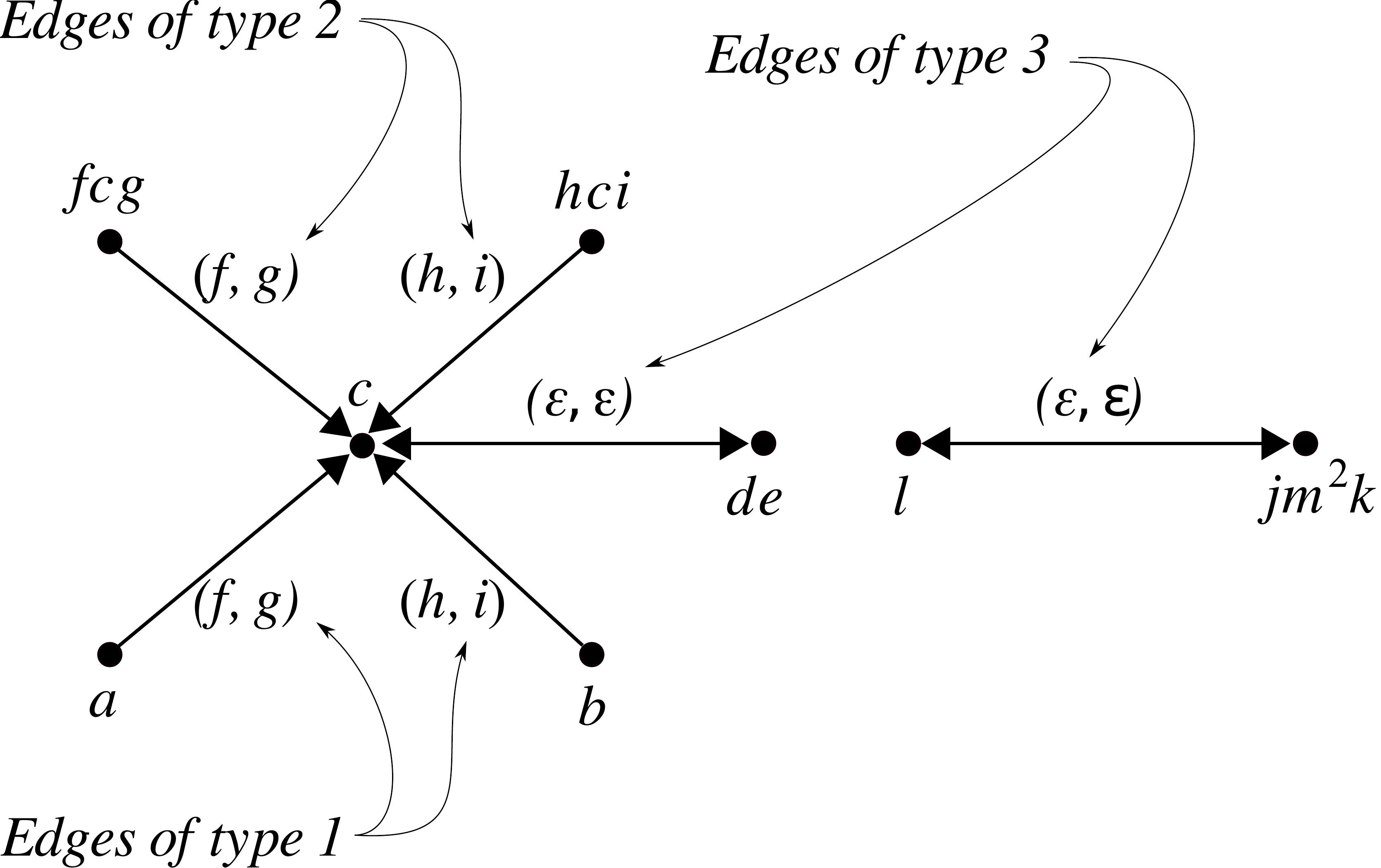}
\caption{The bi-sided graph of $\langle a,b,c,d,e,f,g,h,i,j, k,l,m|a=fcg, b=hci, c=de, l=jm^2k\rangle$}
\label{fig42}
\end{figure}

\begin{remark} For the remainder of this subsection we consider presentations $\langle X|R\rangle$ such that
\begin{enumerate}
\item $\langle X|R\rangle$ is an Adian presentation.

\item $\langle X|R\rangle$ satisfies condition $(\star)$.

\item The bi-sided graph $BS(X, R)$ is cycle-free. That is, there are no closed paths, directed or un-directed, in $BS(X, R)$. In other words, $BS(X, R)$ is a forest.
\end{enumerate}
\end{remark}

Note that if the bi-sided graph of a presentation $\langle X|R\rangle$ is a forest, then  every $R$-word labels a vertex of a connected component in $BS(X, R)$ that is a tree. For an $R$-word $u$, we let $T_u$ denote the unrooted tree that contains the vertex $u$ and we refer to $T_u$ as the \textit{bi-sided tree of $u$}. If $u$ and $v$ label two different vertices of the same bi-sided tree, then $T_u$ and $T_v$ denote the same unrooted, bi-sided tree.

\begin{definition}
Let $u$ be an $R$-word. We say that a vertex labeled by $v$ of $T_u$ is \textit{accessible} from $u$ if there exists a path labeled by $v$ in $SC(u)$.

\end{definition}

Note that if there exists a directed edge from $v_1$ to $v_2$ in the bi-sided tree of $u$ and $v_1$ is accessible from $u$ then $v_2$ is also accessible from $u$. Because if $v_1$ is accessible from $u$, then there exists a path labeled by $v_1$ in $SC(u)$. Since there exists a directed edge from $v_1$ to $v_2$ in the bi-sided tree of $u$ therefore either $(v_1,xv_2y)\in R$ or $v_1\equiv xv_2y$ for some $x,y\in X^+$. In either case there exists a path labeled by $v_2$ in $SC(u)$.

 However, if $v_2$ is accessible from $u$, and there is an edge in $BS(X, R)$ from $v_1$ to $v_2$, then $v_1$ is not necessarily accessible from $u$. Also, if a vertex $v$ of $T_u$ is \textit{not} accessible from $u$ then all of the vertices of $T_u$ that lie after the vertex $v$ going along any geodesic path from $u$ to an extremal  vertex of $T_u$ are also not accessible.

\begin{lemma}\label{complex of R-word}
Let $M=Inv\langle X|R\rangle$ be a finitely presented Adian inverse semigroup that satisfies condition $(\star)$ and (such that) $BS(X, R)$ contains no closed paths. Then $SC(u)$ is finite, for every $R$-word $u$.
\end{lemma}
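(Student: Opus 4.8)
The plan is to construct $SC(u)$ explicitly and bound its size using the hypothesis that $BS(X,R)$ is a finite forest. The key observation is that when $u$ is an $R$-word, the linear graph of $u$ is already ``half'' of the relation $(u,v)$ for every $(u,v)\in R$, so the very first $\mathscr{P}$-expansions sew on the other sides $v$ of all such relations. More generally, the edges of $BS(X,R)$ encode precisely which new $R$-words can be ``read'' once an $R$-word has been sewn in: a directed edge from $v_1$ to $v_2$ says that reading $v_1$ somewhere in the complex forces $v_2$ (either because $(v_1, xv_2y)\in R$ or because $v_1\equiv xv_2y$), and the two-directional $(\varepsilon,\varepsilon)$-edges record relations $(v_1,v_2)$ with no proper $R$-subwords. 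So I would first prove, by induction on the construction of $SC(u)$ via full $\mathscr{P}$-expansions, that the only $R$-words that ever get sewn on (i.e.\ are accessible from $u$) are the vertices of the single bi-sided tree $T_u$ containing $u$.

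The induction step is where Lemma~\ref{path} and Lemma~\ref{no folding} do the work. Since $\langle X|R\rangle$ is Adian and satisfies condition $(\star)$, no proper prefix of an $R$-word is a suffix of any $R$-word, so a newly sewn-on positive path labeled by some $R$-word $v$ cannot overlap any existing path in a way that creates a ``spurious'' reading of a relation side that is not already a subword of $v$ or already present. Concretely: each time we perform an elementary $\mathscr{P}$-expansion for a relation $(r,s)$, reading $r$ at some vertices, condition $(\star)$ guarantees that $r$ must lie entirely inside one of the previously sewn-on $R$-word paths (or inside the original linear graph of $u$), so $r$ labels a vertex adjacent to the current $R$-word in $BS(X,R)$, and sewing on $s$ corresponds to following that edge. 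By Lemma~\ref{no folding} no folding collapses anything, so the combinatorial structure is controlled exactly by the walk in $T_u$ starting at $u$. Because $T_u$ is a \emph{finite} tree (the bi-sided graph is a forest and $X,R$ are finite), only finitely many distinct $R$-words — namely the finitely many vertices of $T_u$ — ever get sewn on, and each is sewn on only finitely many times (once along each maximal positive transversal, of which there are finitely many by the no-directed-cycles Lemma~\ref{no cycles 1}).

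Having bounded which relations get applied and how often, I would conclude that the $\mathscr{P}$-expansion process terminates: the sequence of finite approximate complexes $(\alpha_n,\Gamma_n(u),\beta_n)$ stabilizes once every accessible $R$-word has been sewn on along every maximal positive transversal, since after that point no new elementary $\mathscr{P}$-expansion introduces anything not already present. By Theorem~\ref{closure}, the resulting finite closed complex is $SC(u)$, so $SC(u)$ is finite. The main obstacle I anticipate is the careful bookkeeping in the induction step — precisely verifying that condition $(\star)$ plus the Adian hypothesis forbids any overlap that would let a relation side be read ``across'' the seam of two sewn-on pieces, thereby escaping the bi-sided tree $T_u$; this is where one must combine the prefix/suffix restrictions of $(\star)$ with the structural facts about $Star^o$ and $Star^i$ from Lemmas~\ref{path} and~\ref{no folding}, and with the absence of directed cycles to ensure the transversal count stays finite. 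The acyclicity of $BS(X,R)$ is what converts ``finitely many steps locally'' into ``globally finite,'' since a cycle in $BS(X,R)$ could otherwise force infinitely many sewings along an ever-lengthening transversal.
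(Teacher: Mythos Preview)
Your overall strategy---track which $R$-words become readable by walking in the bi-sided tree $T_u$, use condition~$(\star)$ to rule out overlaps across seams, and invoke finiteness of $T_u$---is exactly the paper's approach. The containment claim (every $R$-word that is ever read lies in $T_u$) and the use of condition~$(\star)$ to prevent a relation side from being read across the junction of two sewn-on pieces are both correct and match the paper's reasoning.

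The gap is in your termination argument. You write that each accessible $R$-word is ``sewn on only finitely many times (once along each maximal positive transversal, of which there are finitely many by the no-directed-cycles Lemma~\ref{no cycles 1}).'' But Lemma~\ref{no cycles 1} only tells you that a \emph{given finite} complex without directed cycles has finitely many maximal positive paths; it does not bound the number of transversals in the limit complex $SC(u)$, which is precisely what you are trying to prove is finite. So this step is circular: the number of transversals could in principle grow without bound as you iterate, even though no single approximate complex has a directed cycle. Knowing that only the finitely many $R$-words in $T_u$ can appear does not by itself stop the same $R$-word from being sewn on at more and more locations.

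The paper closes this gap differently: it argues that each full $\mathscr{P}$-expansion step that is not already closed must make a \emph{new} vertex of $T_u$ accessible---i.e., it produces a transversal containing a path labeled by some $R$-word that did not label any path in the previous approximate complex. Since $T_u$ has only finitely many vertices and each labels a distinct $R$-word, the sequence of full $\mathscr{P}$-expansions must stabilize after at most $|V(T_u)|$ steps. This is the non-circular measure of progress you are missing; once you replace your transversal-counting argument with this ``strictly new accessible vertex per step'' observation, your proof goes through.
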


\begin{proof} We start from the linear automaton $(\alpha,\Gamma_0(u),\beta)$ and obtain the approximate complex $(\alpha,\Gamma_1(u),\beta)$ by applying full $\mathscr{P}$-expansion on $(\alpha,\Gamma_0(u),\beta)$.

In the construction of $(\alpha,\Gamma_1(u),\beta)$, we attach  a path labeled by one side of a relation whose other side can be read in $(\alpha, \Gamma_0(u),\beta)$. In the linear automaton $(\alpha, \Gamma_0(u),\beta)$ we can precisely read $u$ and all those $R$-words that are proper subwords of $u$. Each relation of $R$ with one side $u$ is either of the form $(u,xvy)$ for some $x,y\in X^+$ and an $R$-word $v$ or of the form $(u,v)$ where $v$ contains no $R$-word as its proper subword. The first case corresponds to an edge of ``type 1" in $T_u$ and the second case corresponds to an edge  of ``type 3" in $T_u$. Any $R$-word $u_1$ that is a proper subword of $u$ corresponds to an edge of "type 2" in $T_u$, where $u\equiv xu_1y$ for some $x, y \in X^+$. Thus, in performing Stephen's $\mathscr{P}$-expansion to obtain $\Gamma_1(u)$ from $\Gamma_0(u)$, there is a precise correspondence between first generation transversals of $SC(u)$ and the edges of $T_u$ whose initial vertex is either labeled by $u$ or labeled by an $R$-word that is a proper subword of $u$.

If there exists an edge of type 1 or type 2 labeled by $(x,y)$ for some $x,y\in X^+$ from a vertex labeled by an $R$-word $v$ to the vertex $u$  in $T_u$, then either $(v,xuy)\in R$ or $v\equiv xuy$. We show that no generation 1 transversal contains a subpath labeled by $v$. In other words, we show that $v$ is inaccessible from $u$.

Note that we cannot read the word $xuy$ in the linear automaton $(\alpha,\Gamma_0(u),\beta)$ because $(\alpha, \Gamma_0(u),\beta)$ contains only one path from $\alpha$ to $\beta$ that is labeled by the word $u$. So, if $(v,xuy)\in R$ then we cannot attach a path labeled by $v$ to the linear automaton $(\alpha, \Gamma_0(u),\beta)$ and if $v\equiv xuy$ then we cannot read a path labeled by $v$ in  $(\alpha, \Gamma_0(u),\beta)$ because it is longer than the path labeled by $u$.

We obtain the approximate graphs $(\alpha,\Gamma_2(u),\beta)$ by applying the full $\mathscr{P}$-expansion on $(\alpha,\Gamma_1(u),\beta)$. We observe that the second generation transversals of $SC(u)$ are obtained as a consequence of attaching paths labeled by those $R$-words which label the terminal vertices of those edges of $T_u$ whose initial vertex is either a first generation transversal or a proper subword of an $R$-word that labels a first generation transversal of $SC(u)$. If there exists an edge in $BS(X, R)$ labeled by $(x_1,y_1)$ for some $x_1,y_1\in X^*$ with initial vertex labeled by $v_1$ and terminal vertex labeled by $v_2$ such that $x_1v_2y_1$ is a subword of an $R$-word that labels a first generation transversal, then we obtain a second generation transversal by sewing on a path labeled by $v_1$ from the initial vertex  to the terminal vertex of the  of the path $x_1v_2y_1$.

If there exists an edge of type 1 or type 2 labeled by $(x_1,y_1)$ with initial vertex $v_1$ and terminal vertex $v_2$ (i.e. either $(v_1,x_1v_2y_1)\in R$ or $v_1\equiv x_1v_2y_1$ ) such that $v_2$ is a subword of an $R$-word that labels a first generation transversal but $x_1v_2y_1$ is not a subword of that transversal, then we cannot attach a path labeled by $v_1$ to $(\alpha,\Gamma_1(u),\beta)$. So, none of the vertices of $T_u$ that occur after the vertex $v_1$ going along a path from the vertex $u$ to an extremal vertex of $T_u$ will be accessible from $u$ in $SC(u)$.

We observe that when we apply full $\mathscr{P}$-expansion on $(\alpha,\Gamma_n(u),\beta)$ for some $n\in \mathbb{N}$, we cover some more vertices of $T_u$ that were not covered before in the sense that we add some new transversals that contains an $R$-word that labels a vertex of $T_u$ and that $R$-word does not label a path in  $(\alpha,\Gamma_n(u),\beta)$. Since $T_u$ is a finite tree and none of the $R$-word label two distinct vertices of $T_u$ therefore the process of applying full $\mathscr{P}$-expansion must terminate after a finite number of steps. Hence $SC(u)$ is finite.
\end{proof}

\begin{lemma}\label{prefix l}
 Let $M=Inv\langle X|R\rangle$ be an Adian inverse semigroup. Let $u$ be an $R$-word and $z$ labels a proper suffix of a transversal $p$ of $SC(u)$. Then either
\begin{enumerate}

\item[(i)] $z$ contains an $R$ word that also labels a subpath of $p$, or

\item[(ii)] A prefix of $z$ is a suffix of some $R$-word that also labels a subpath of $p$.
\end{enumerate}
\end{lemma}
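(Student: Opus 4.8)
I would argue by induction on the number $m$ of elementary $\mathscr{P}$-expansions needed to pass from the linear automaton $(\alpha,\Gamma_0(u),\beta)$ to a transversal representing $p$. The preliminary facts I would use, all available from what precedes, are: every intermediate graph in this process is again a transversal of $SC(u)$ (cf.\ the proof of Proposition \ref{equal}); no folding ever occurs in the construction of $SC(u)$ for a positive word (Lemma \ref{no folding}), so the correspondence between subwords of a transversal and subpaths of $SC(u)$ is unambiguous; and a single elementary $\mathscr{P}$-expansion along a transversal $q=x_1 r x_2$, with $r$ an $R$-word, produces the transversal $p=x_1 s x_2$, where $r=s\in R$ and $s$ is an $R$-word, leaving the flanking subpaths labelled $x_1$ and $x_2$ untouched as subpaths of $SC(u)$. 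I would also use, throughout, that condition $(\star)$ is in force in this subsection, in the form ``no proper suffix of an $R$-word is a prefix of any $R$-word'' and ``no proper prefix of an $R$-word is a suffix of any $R$-word'', together with the structural description of $R$ recorded just after the definition of condition $(\star)$. For the base case $m=0$ we have $p=u$: since $z$ is a proper suffix of the $R$-word $u$ and $u$ labels the whole path $p$, conclusion $(ii)$ holds with the $R$-word taken to be $u$ itself.

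\textbf{Inductive step, first case.} Write $p=x_1 s x_2$, obtained from a transversal $q=x_1 r x_2$ requiring fewer expansions by rewriting the $R$-word $r$ to the $R$-word $s$. Since $z$ is a nonempty suffix of $p=x_1 s x_2$ and $s\neq\varepsilon$, exactly one of the following holds: either (a) $z$ lies inside $x_2$, say $x_2=x_2' z$; or (b) $z$ meets the displayed occurrence of $s$, i.e.\ either $z=s' x_2$ with $s'$ a nonempty suffix of $s$, or $z=x_1' s x_2$ with $x_1'$ a nonempty suffix of $x_1$. In case (b) we are done immediately: in the first possibility $s'$ is a prefix of $z$ and a suffix of the $R$-word $s$, which labels a subpath of $p$ (and, when $s'=s$, $z$ actually contains $s$), so $(i)$ or $(ii)$ holds; in the second, $z$ contains the $R$-word $s$, which labels a subpath of $p$ lying inside the $z$-portion, so $(i)$ holds.

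\textbf{Inductive step, main case.} In case (a), $z$ is also a proper suffix of the transversal $q$ (the prefix $x_1 r x_2'$ is nonempty since $r\neq\varepsilon$), and it labels the very same subpath of $SC(u)$ inside $q$ as inside $p$, because this subpath lies in the untouched tail $x_2$. Applying the induction hypothesis to $q$ and $z$ yields an $R$-word $r''$ labelling a subpath of $q$ such that either $z$ contains $r''$, or some nonempty prefix $z'$ of $z$ is a suffix of $r''$ with the occurrence of $r''$ in $q$ terminating at the $|z'|$-th vertex of the $z$-portion. If $z$ contains $r''$, then $r''$ labels a subpath inside the common tail $x_2$, hence a subpath of $p$, and $(i)$ holds for $p$. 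Otherwise the occurrence of $r''$ in $q$ ends inside $x_2$; if it also begins inside $x_2$, it survives verbatim in $p$, so $z'$ is a prefix of $z$ and a suffix of the $R$-word $r''$ labelling a subpath of $p$, and $(ii)$ holds.

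\textbf{The main obstacle.} The only remaining configuration in case (a) is that the occurrence of $r''$ in $q$ reaches to the left past the end of the factor $r$ being rewritten; I expect this to be the delicate point. If $r''$ overlaps a proper suffix of $r$, then the overlap word is simultaneously a proper suffix of the $R$-word $r$ and a proper prefix of the $R$-word $r''$, contradicting part (2) of condition $(\star)$; if the occurrence of $r''$ engulfs the whole occurrence of $r$, then $r$ is a proper $R$-subword of $r''$, which by part (1) of condition $(\star)$ cannot be a prefix of $r''$, so $r$ sits in the interior of $r''$, and the structural description of $R$ then forces $r''\equiv x\,r\,y$ with $x,y\in X^+$ and $(a,r'')\in R$ for some $R$-word $a$ --- which, traced through the bi-sided graph, produces a forbidden closed path (or a forbidden repetition inside $r''$) under the hypotheses in force. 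Hence this configuration never occurs, the induction goes through, and the lemma follows. Everything outside this overlap analysis is bookkeeping about which subpaths of $SC(u)$ are preserved by an elementary $\mathscr{P}$-expansion, which is harmless precisely because no folding occurs in the construction of $SC(u)$ for a positive word $u$.
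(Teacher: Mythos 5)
Your route is genuinely different from the paper's. The paper disposes of the lemma in four lines, with no induction: if $(i)$ fails, then the initial vertex of $z$ cannot be the initial vertex of any $R$-word occurrence along $p$ (such an occurrence would terminate at or before $\beta$ and hence be absorbed into $z$, giving $(i)$); therefore it lies strictly between the endpoints of some $R$-word occurrence along $p$, and the terminal segment of that occurrence from the initial vertex of $z$ onward is simultaneously a prefix of $z$ and a suffix of that $R$-word, giving $(ii)$. That argument rests entirely on the unstated claim that the $R$-word occurrences along a transversal of $SC(u)$ cover every vertex of the transversal. Your induction is, in effect, an attempt to prove that covering claim one elementary expansion at a time, and it breaks down at precisely the point you flag as ``the main obstacle.''

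The gap there is genuine. If the occurrence of $r''$ on $q=x_1rx_2$ properly contains the rewritten occurrence of $r$, so that $r''\equiv xry$ with $x,y\in X^+$, this does \emph{not} produce a closed path in $BS(X,R)$: it is exactly a type-2 edge from $r''$ to $r$ (together, possibly, with a type-1 edge from some $a$ to $r$ if $(a,r'')\in R$), i.e.\ an undirected path of length two through the vertex $r$, not a cycle. Such configurations are explicitly permitted by the paper's own examples ($a=fbg$, $b=hci$, and so on), and they actually occur: for $\langle a,b,c,d,e\mid a=dbe,\ b=c\rangle$, which is Adian, satisfies condition $(\star)$, and has a forest as its bi-sided graph, the transversal $q\equiv dbe$ of $SC(a)$ carries the occurrence $r''\equiv dbe$ engulfing the occurrence of $r\equiv b$ that is rewritten to $c$; on the resulting transversal $p\equiv dce$ the word $dbe$ no longer labels a subpath of $p$, so your inductive step produces no witness for $z\equiv e$ (and indeed neither $(i)$ nor $(ii)$ holds for this $p$ and $z$ under the literal reading of ``labels a subpath of $p$''). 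So the engulfing case cannot be excluded and must instead be handled directly, e.g.\ by weakening ``subpath of $p$'' to ``path between two vertices of $p$'' or by tracking which $R$-word occurrences survive a rewrite; as written, the induction does not close. A secondary problem is that you assume condition $(\star)$ and acyclicity of $BS(X,R)$ throughout, whereas the lemma is stated for an arbitrary Adian presentation and Remark \ref{prefix r} treats condition $(\star)$ as an \emph{additional} hypothesis; even if your argument were repaired, it would establish a strictly narrower statement than the one claimed.
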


\begin{proof} If $z$ does not contain an $R$-word that also labels a subpath of the transversal $p$ then the initial vertex of the path labeled by $z$ is not the initial vertex of any $R$-word that labels a subpath of $p$. Hence the initial vertex of $z$ lies between a pair of vertices of $p$ that are the  initial and the terminal vertex of a subpath of $p$ that is labeled by an $R$-word. Hence a prefix of $z$ is a suffix of an $R$-word.

\end{proof}

We also remark that a dual statement also holds for a prefix of a transversal of the Sch\"{u}tzenberger complex of an $R$-word over an Adian presentation.

\begin{remark}\label{prefix r}
In Lemma \ref{prefix l}, if the presentation $\langle X|R\rangle$ satisfies condition $(\star)$ and $z$ happens to be a prefix of an $R$-word, then only $(i)$ holds, because $(ii)$ violates the condition $(\star)$.

Similarly, in the dual statement to Lemma \ref{prefix l}, if the presentation $\langle X|R\rangle$ satisfies condition $(\star)$ and $z$ happens to be a suffix of an $R$-word, then only $(i)$ holds, because $(ii)$ violates the condition $(\star)$.
\end{remark}

\begin{theorem} Let $M=Inv\langle X|R\rangle$ be a finitely presented Adian inverse semigroup that satisfies condition $(\star)$ and $BS(X, R)$ contains no closed path. Then $SC(w)$ is finite, for all $w\in (X\cup X^{-1})^*$.
\end{theorem}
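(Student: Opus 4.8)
The plan is to reduce to the case of positive words, dispose of $R$-words by the results already in hand, and then control the construction of $SC(w')$ for a general positive $w'$ by means of the laminar structure that condition $(\star)$ imposes on occurrences of $R$-words.

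First, by Theorem \ref{finite complexes} it is enough to prove that $SC(w')$ is finite for every positive word $w'\in X^+$. Recall from Lemma \ref{no folding} that for $w'\in X^+$ the complex $SC(w')$ is built from the linear automaton of $w'$ using only elementary $\mathscr{P}$-expansions, with no folding, and that the connecting maps $\psi_n$ of the resulting sequence of approximate complexes are embeddings. Hence $SC(w')$ is finite if and only if this $\mathscr{P}$-expansion process terminates, i.e. if and only if only finitely many transversals of $SC(w')$ ever occur. The case in which $w'$ is itself an $R$-word is precisely Lemma \ref{complex of R-word}, and this is essentially the only place where the hypothesis that $BS(X,R)$ has no closed path gets used.

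The next step is the structural remark that, under condition $(\star)$, any two occurrences of $R$-words inside an arbitrary positive word are either nested (one a subword of the other) or disjoint: a genuine overlap would display a proper suffix of one $R$-word as a proper prefix of another, which the second clause of $(\star)$ forbids. Consequently every positive word admits a canonical factorization $w'=g_0\,u_1\,g_1\cdots u_p\,g_p$, where $u_1,\dots,u_p$ are the (pairwise disjoint) maximal $R$-word occurrences of $w'$ and each $g_i$ is a fixed subword of $w'$ that contains no $R$-word. I would then prove the following claim, which finishes the proof at once: \emph{every transversal of $SC(w')$ has the form $g_0\,t_1\,g_1\cdots t_p\,g_p$, where each $t_i$ is a transversal of $SC(u_i)$.} Granting this, each $SC(u_i)$ is finite by Lemma \ref{complex of R-word}, hence supplies only finitely many possible blocks $t_i$, and since the $g_i$ are fixed it follows that $SC(w')$ has only finitely many transversals, so it is finite.

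The claim would be established by induction on the number of $\mathscr{P}$-expansion steps used to build $SC(w')$, and this is where the work — and the main obstacle — lies. One must show that no $\mathscr{P}$-expansion ever creates an $R$-word straddling a ``seam'': neither the seam between a block currently reading $t_i$ and an adjacent $g_i$, nor an internal seam of a $u_i$-block produced by sewing on a relation-side $s=xvy$ (the seams among $x$, $v$, $y$ and between $s$ and the material surrounding it). This is exactly where condition $(\star)$ is indispensable; to run the argument one needs the precise description of the prefixes and suffixes of a transversal $t_i$ of $SC(u_i)$ given by Lemma \ref{prefix l} together with Remark \ref{prefix r}, because a straddling $R$-word would, after a short case analysis, force a proper prefix of an $R$-word to be a suffix of an $R$-word (or dually), contradicting $(\star)$; and Lemma \ref{bigon} would be used to recognize the relevant $S$-diagrams inside a block when comparing co-terminal positive paths. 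Once the seams are known to be inert, the $\mathscr{P}$-expansions split up blockwise: inside the block over $u_i$ one is carrying out exactly the construction of $SC(u_i)$, which terminates by Lemma \ref{complex of R-word}, while outside the blocks nothing happens. Carrying out this seam analysis in full is the crux of the proof.
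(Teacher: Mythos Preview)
Your reduction to positive words via Theorem \ref{finite complexes} and the observation that condition $(\star)$ forces $R$-word occurrences to be nested or disjoint are both correct. The genuine gap is that your central structural claim --- that every transversal of $SC(w')$ has the form $g_0\,t_1\,g_1\cdots t_p\,g_p$ with $t_i$ a transversal of $SC(u_i)$ --- is \emph{false}. The seams are not inert: a $\mathscr{P}$-expansion can produce an $R$-word that straddles the boundary between a $g_i$ and an adjacent block, and the resulting new transversal need not respect the original block decomposition at all.

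Here is an explicit counterexample. Take $X=\{a,b,c,d,e,f,g,h,r,s\}$ and
$R=\{\,a=brcs,\ r=def,\ h=gbdefc\,\}$.
One checks directly that $\langle X\mid R\rangle$ is Adian, satisfies condition $(\star)$, and that $BS(X,R)$ is a tree. For $w'=gbrcs$ the unique maximal $R$-word occurrence is $u_1=brcs$, so $g_0=g$, $g_1=\varepsilon$. The transversals of $SC(brcs)$ are $\{a,\,brcs,\,bdefcs\}$, so your claim predicts the transversals of $SC(w')$ to be $\{ga,\,gbrcs,\,gbdefcs\}$. But $gbdefcs$ contains the $R$-word $gbdefc$, which straddles the seam between $g_0$ and the block; applying $h=gbdefc$ yields the transversal $hs$ of $SC(w')$, which does not begin with $g$ and is not of the form $g\cdot t_1$ for any $t_1$. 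Your seam analysis therefore cannot be carried out: Lemma \ref{prefix l} and Remark \ref{prefix r} tell you that the straddling $R$-word contains an accessible $R$-word as a proper subword, but this does not produce the prefix/suffix contradiction you are hoping for.

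The paper's argument proceeds quite differently. It does not attempt to show the seams are inert. Instead, after attaching the $SC(u_i)$'s to obtain $S_1$, it observes (via Lemma \ref{prefix l} and Remark \ref{prefix r}) that any new $R$-word $v$ readable in $S_1$ must properly contain some $R$-word $r_j$ that was already accessible from one of the $u_j$. This means $v$ is a vertex of the bi-sided tree $T_{u_j}$ that was previously inaccessible. One then attaches $SC(v)$, which is finite by Lemma \ref{complex of R-word}, and repeats. Termination follows not from any product decomposition of $SC(w')$, but from the fact that each iteration captures strictly more vertices of the fixed finite trees $T_{u_j}$, and no $R$-word labels vertices in two different trees.
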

\begin{proof}
We just need to show that $SC(w)$ is finite  for all $w\in X^+$ and then the above theorem follows from Theorem \ref{finite complexes}.  So, we assume that $w\in X^+$ and we construct the linear automaton of $w$, $(\alpha,\Gamma_0(w),\beta)$.

It follows from condition $(\star)$, that no two distinct $R$-words will overlap with each other. However, an $R$-word can be a proper subword of another $R$-word. So, we can uniquely factorize $w$ as $x_0u_1x_2u_2...u_nx_n$, where $x_i\in X^*$ and $u_i$'s are maximal $R$-words in the sense that none of the $u_i$'s are properly contained in another $R$-word that is also a subword of $w$.

It follows from Lemma \ref{complex of R-word}, that $SC(u_i)$ is finite for all $1\leq i\leq n$. So, we attach $SC(u_i)$ for all $1\leq i\leq n$ to the corresponding paths labeled by $u_i$'s in $(\alpha,\Gamma_0(w),\beta)$ to construct $SC(w)$ and denote the resulting complex by $S_1$. It follows from Lemma \ref{no folding}, that no two edges get identified with each other as a consequence of attaching $SC(u_i)$ for all $1\leq i\leq n$ to the linear automaton $(\alpha,\Gamma_0(w),\beta)$. If $S_1$ is closed under elementary $\mathscr{P}$-expansion, then we are done. Otherwise, we will be able to read a finite number of $R$-words labeling the paths of $S_1$ where we can attach new 2-cells  by sewing on paths labeled by the other sides of the corresponding relations.

We assume that $v_1,v_2,...,v_m$ are the $R$-words that label the paths of $S_1$ where we can attach new 2-cells. Note that each of $v_i$ labels a vertex of $T_{u_j}$ for some $i$ and $j$, that was inaccessible from $u_j$ earlier. Because if $v_i$ labels a path in $S_1$ then by Lemma \ref{prefix l} and Remark \ref{prefix r} the path labeled by $v_i$ contains an $R$-word $r_j$ as a proper subword such that the $R$-word $r_j$ labels a path in $SC(u_j)$. In other words, $r_j$ labels an accessible vertex of $T_{u_j}$ from the vertex $u_j$. Since $r_j$ is a proper subword of $v_i$, therefore there exists an edge of type 2 in $T_{u_j}$ with initial vertex labeled by $v_i$ and the terminal vertex labeled by $r_j$. So, $T_{v_i}$ and $T_{u_j}$ represent the same tree for some $i$ and $j$. By Lemma \ref{complex of R-word}, $SC(v_i)$ is finite for all $1\leq i\leq m$ and cover some more vertices of $T_{u_j}$ in the sense that $SC(v_i)$ contain paths labeled by those $R$-word which also label some of the vertices of $T_{u_j}$, for some $1\leq j\leq n$, that were not covered by $SC(u_j)$.

We attach $SC(v_i)$  to the paths labeled by $v_i$ for all $1\leq i\leq m$ in $S_1$ and denote the resulting complex by $S_2$. No two edges get identified with each other in $S_2$ as a consequence of attaching $SC(v_i)$'s  to $S_1$ by Lemma \ref{no folding}. If $S_2$ is closed under elementary $\mathscr{P}$-expansion then we are done. Otherwise we repeat this process of attaching Sch\"{u}tzenberger complexes of $R$-words and capturing more vertices of the trees $T_{u_i}$ for some $1\leq i\leq n$. This process eventually terminates, because, each $T_{u_i}$ is a finite tree with all the vertices labeled by distinct $R$-words and every $R$-word labels a vertex of exactly one tree. Hence, $SC(w)$ is a finite complex.

\end{proof}

\begin{remark}
Let $M=Inv\langle X|R\rangle$ be an Adian inverse semigroup that satisfies condition $(\star)$ and $BS(X,R)$ contains no closed path then the word problem for $M$ is decidable. It follows from the Theorem \ref{finite graph} that the Sch\"{u}tzenberger complex of every word $w\in (X\cup X^{-1})^*$ is finite over the presentation $\langle X|R\rangle$. So, for any two given words $w_1,w_2\in (X\cup X^{-1})^*$, we can easily check whether $w_1\in L(w_2)$ and $w_2\in L(w_1)$ or not.
\end{remark}

%\end{definition}

\subsection{The word problem for Inverse semigroups given by the presentation $\langle a,b|ab^m=b^na\rangle$}

In this section we show that the word problem is decidable for the inverse semigroup given by the presentation $M=Inv\langle a,b|ab^m=b^na\rangle$, where $m,n\in\mathbb{N}$.  The word problem for the case case $m=n$ follows from Corollary 6.6 of \cite{SG}. So, throughout this section we assume that $m>n$. The case $m<n$ follows from a dual argument. We can get an alternate proof for the case $n=m$ by following along same lines as in the case of $m<n$.

\begin{lemma}\label{lemma 12}
The Sch\"{u}tzenberger complex of a word $a^kb^t$ for $k,t\in \mathbb{N}$, over the presentation $\langle a,b|ab^m=b^na\rangle$, is finite.
\end{lemma}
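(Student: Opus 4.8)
The plan is to build $SC(a^kb^t)$ explicitly by running Stephen's full $\mathscr{P}$-expansion procedure on the linear automaton $(\alpha,\Gamma_0(a^kb^t),\beta)$ and to prove that the procedure halts after finitely many steps. Since $a^kb^t\in X^+$, Lemma \ref{no folding} guarantees that no folding ever occurs during this construction; consequently each approximate complex is obtained from its predecessor purely by sewing on one relator bigon for $ab^m=b^na$, and it suffices to show that only finitely many such elementary $\mathscr{P}$-expansions can be performed.

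First dispose of the degenerate cases. If $k=0$, or $t=0$, or $t<m$, then in $\Gamma_0(a^kb^t)$ neither side of the relator can be read: no sub-path is labeled $ab^m$, because fewer than $m$ consecutive $b$-edges follow any $a$-edge, and no sub-path is labeled $b^na$, because every $a$-edge precedes every $b$-edge. Hence $\Gamma_0(a^kb^t)$ is already closed and $SC(a^kb^t)$ is this finite linear automaton. So assume $k\ge 1$ and $t\ge m$, and write $t=qm+r$ with $0\le r<m$ and $q\ge 1$.

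The heart of the argument is the following description, which I would establish by peeling off the $a$'s of $a^kb^t$ one at a time, from right to left. The last $a$-edge of $\Gamma_0(a^kb^t)$ issues from the vertex $p$ at the end of the $a^{k-1}$-prefix into the vertex from which the $b^t$-tail emanates. Reading $ab^m$ and sewing on $b^na$, then repeating, has the effect of ``pushing this $a$ rightward through the tail'': after $i$ such elementary expansions there is a transversal labeled $a^{k-1}b^{in}ab^{t-im}$, and this occurs exactly for $0\le i\le q$. Combinatorially, this grows off $p$ a $b$-labeled spine $p=d_0\xrightarrow{b}d_1\xrightarrow{b}\cdots\xrightarrow{b}d_{qn}$ of length $qn$, together with $a$-edges $d_{in}\xrightarrow{a}e_i$ $(0\le i\le q)$ whose heads lie on the original $b^t$-tail at distance $im$ from its start. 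I would then check that this finite configuration admits no further $\mathscr{P}$-expansion using only these vertices: a sub-path labeled $ab^m$ can begin only at a vertex $d_{in}$ with $i<q$, where the relator bigon is already present; and a sub-path labeled $b^na$ can occur only along the spine, ending at some $d_{in}$, where again the complementary path labeled $ab^m$, namely $d_{(i-1)n}\xrightarrow{a}e_{i-1}\xrightarrow{b^m}e_i$, is already present.

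Now the newly created $b$-edge out of $p$ makes $ab^m$ readable starting at the vertex $p'$ at the end of the $a^{k-2}$-prefix, but only against the spine of length $qn$; so the same argument applies and grows a second, smaller staircase off $p'$, with at most $\lfloor qn/m\rfloor$ steps. Because $n<m$ we have $\lfloor qn/m\rfloor<q$, so the ``length available for pushing'' strictly decreases each time the active $a$ moves one position to the left. Thus after at most $k$ rounds every $a$ has been processed; by the closure check above applied at each round, the resulting finite complex is deterministic and closed under $\mathscr{P}$-expansion, so by Theorem \ref{closure} it is $SC(a^kb^t)$, which is therefore finite. The step I expect to demand the most care is the bookkeeping in this induction: verifying that successive staircases interact cleanly -- in particular that each sewn bigon adds a $b$-edge only out of a vertex that had no outgoing $b$-edge and an $a$-edge only into a vertex that had no incoming $a$-edge (consistent with the absence of foldings from Lemma \ref{no folding}), that the $a$-edges already sticking out of a spine never obstruct reading $ab^m$ along it, and that no ``reverse'' expansion (reading $b^na$ and sewing $ab^m$) ever produces a genuinely new sub-path. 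Once the shape of the approximate complexes is pinned down in this way, finiteness follows from the strictly decreasing measure given by the pair (number of unprocessed $a$'s, length of the available $b$-spine).
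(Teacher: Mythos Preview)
Your proposal is correct and follows essentially the same approach as the paper: both build $SC(a^kb^t)$ as a sequence of at most $k$ ``columns'' of relator cells, one per $a$-edge, with each successive column shorter than the last because $n<m$, and conclude that the procedure halts. Your version is more careful than the paper's in explicitly verifying closure at each stage (in particular, checking that no ``reverse'' expansion reading $b^na$ and sewing $ab^m$ creates anything new), which the paper leaves implicit.
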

\begin{proof}
We adopt a slightly different approach to construct $SC(a^kb^t)$. We draw edges labeled by $a$ horizontally and edges labeled by $b$ vertically. Then the linear automaton of $a^kb^t$, $(\alpha_0,\Gamma_0(a^kb^t),\beta_0)$, is shown in Figure \ref{01}. If $t<m$, then we cannot attach any 2-cell to  $(\alpha_0,\Gamma_0(a^kb^t),\beta_0)$. So, the above lemma is true for this case.

If $t\geq m$, then $t=q_1m+r_1$, where $q_1$ is the quotient and $r_1$ is a remainder and $0\leq r_1<m$. We can attach $q_1$ 2-cells in the first column along the vertical segment labeled by $b^t$ of $(\alpha_0,\Gamma_0(a^kb^t),\beta_0)$. After attaching all the 2-cells in the first column along the vertical segment labeled by $b^t$ we have created a new vertical segment labeled by $b^{nq_1}$, because there are total $q_1$ 2-cells and each 2-cells contains exactly $n$ edges on the newly attached side of the 2-cell.

\begin{figure}[h!]
\centering
\includegraphics[trim = 0mm 0mm 0mm 0mm, clip,width=3in]{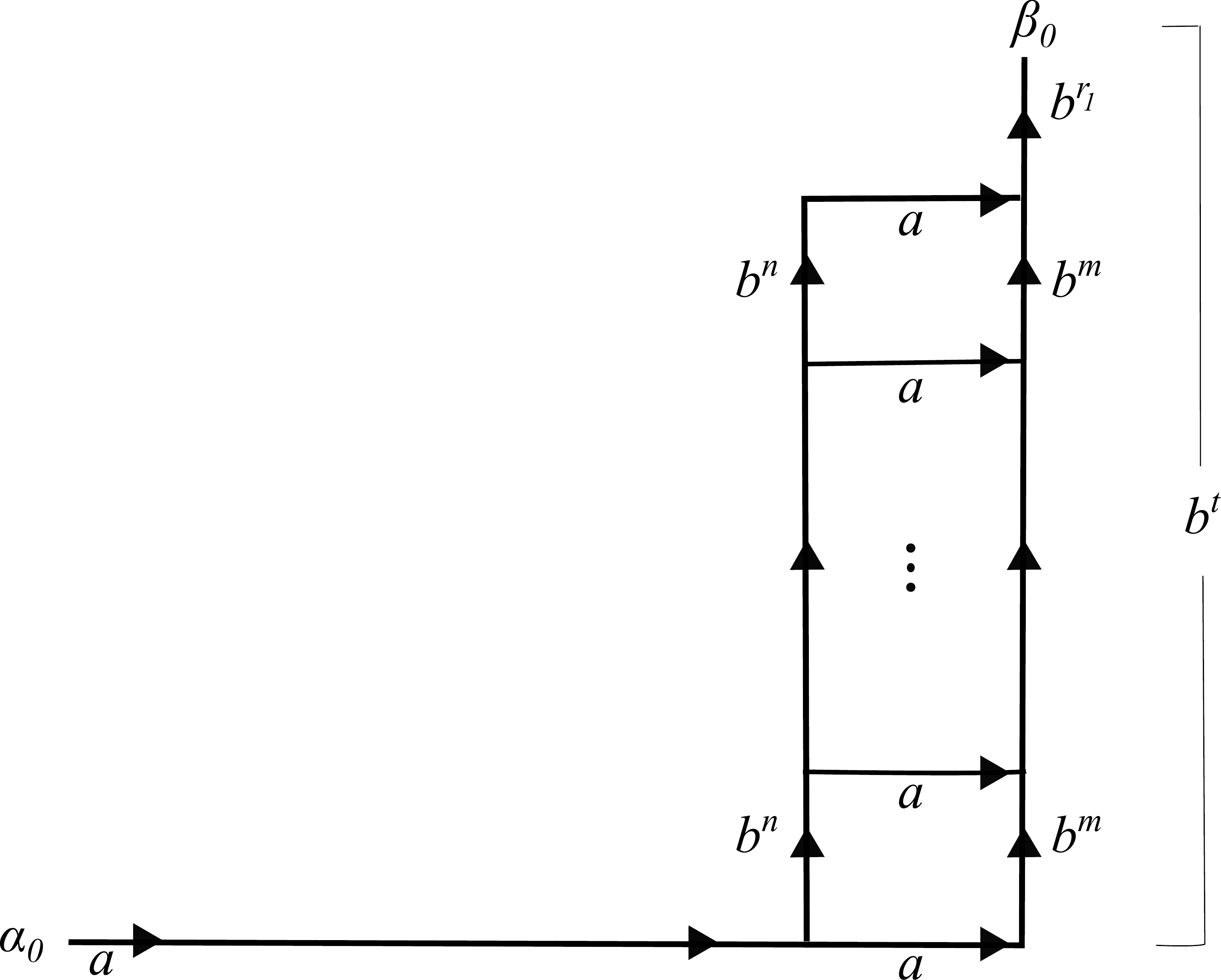}
\caption{Construction of $SC(a^kb^t)$. }
\label{01}
\end{figure}

If $nq_1 <m$ or $k=1$ then the process of attaching new 2-cells terminates at this stage. If neither $nq_1<m$ nor $k=1$, then $nq_1=q_2m+r_2$, where $q_2$ is a quotient, $r_2$ is a remainder and $0\leq r_2 < m$. So,  We can attach a column of $q_2$ 2-cells along the vertical segment labeled by $b^{nq_1}$. This process of attaching columns of new 2-cells terminates after at most $k$ steps. So, $SC(a^kb^t)$ is a finite complex.
\end{proof}
\begin{remark}
In the above construction of $SC(a^kb^t)$ in Lemma \ref{lemma 12}, every new maximal vertical segment contains fewer edges than the other vertical side of the same column of 2-cells.

\end{remark}
\begin{lemma}\label{lemma 13}
The Sch\"{u}tzenberger complex of a word $b^ta^k$ for $k,t\in \mathbb{N}$, over the presentation $\langle a,b|ab^m=b^na\rangle$, is finite.
\end{lemma}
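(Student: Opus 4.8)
The plan is to mimic the explicit construction used for Lemma \ref{lemma 12}: draw $a$-edges horizontally and $b$-edges vertically, so that the linear automaton of $b^ta^k$ is a vertical segment labelled $b^t$ followed by a horizontal segment labelled $a^k$, meeting at a ``corner'' vertex. Since $b^ta^k\in X^+$, Lemma \ref{no folding} tells us that no folding occurs, so Stephen's procedure here consists of nothing but sewing on copies of the relator $ab^m=b^na$. If $t<n$ neither side of the relator can be read, so the linear automaton is already closed and we are done. If $t\ge n$, then $b^na$ is read at the corner; we sew on $ab^m$, which produces a fresh $a$-edge followed by a fresh path of $m$ $b$-edges --- call this new piece a \emph{column} --- reconnecting to the $a^k$-spine one $a$-step nearer $\beta$. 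Iterating, each new column may make further copies of $b^na$ readable, and we keep sewing on columns; I must show this stops.

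The termination argument uses the $\mathbb Z$-grading by $a$-exponent. As noted in the proof of Lemma \ref{no cycles 1}, $SC(b^ta^k)$ embeds in the Cayley complex of $G=Gp\langle a,b|ab^m=b^na\rangle$; composing with the homomorphism $G\to\mathbb Z$ sending $a\mapsto1$, $b\mapsto0$, assigns to every vertex a \emph{level} in $\mathbb Z$, a $b$-edge preserving the level and an $a$-edge raising it by $1$. By the Proposition on sources, sinks and transversals above, every vertex of $SC(b^ta^k)$ lies on a positive transversal from $\alpha$ to $\beta$, and every such transversal reads a word equal in $G$ to $b^ta^k$ and hence has exactly $k$ $a$-edges; so the levels that occur are exactly $0,1,\dots,k$. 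Consequently each sewn column lies entirely at one level, and the column it triggers lies exactly one level higher, so the columns fall into at most $k$ \emph{generations}. The level-$0$ vertices are precisely the vertices of the original $b^t$-segment (an old vertex can only acquire a new incoming $b$-edge, when it is the endpoint of a new column, or a new outgoing $a$-edge, when it is the point a new column is sewn from --- never a new outgoing $b$-edge), so only finitely many columns are sewn onto level $0$ and the first generation is finite; and since the complex stays deterministic, each vertex is the source of at most one sewn $a$-edge, so each column spawns at most $m$ children. Hence every generation is finite and there are at most $k$ of them, so only finitely many $2$-cells are ever attached; as each contributes only $m$ new vertices, the resulting complex is finite, and by construction it is deterministic and closed under $\mathscr{P}$-expansion, hence equal to $SC(b^ta^k)$. (This is also consistent with Lemma \ref{no cycles 1}: the grading exhibits directly why there are no directed cycles.)

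\textbf{The main obstacle.} The delicate point is the bookkeeping behind the generation count: one must verify that whenever a fresh copy of $b^na$ becomes readable, its $b^n$-portion lies inside an already-present column (or the original $b^t$-segment) rather than running off across the $a^k$-spine --- this is exactly where one uses that spine vertices never acquire outgoing $b$-edges and that no folding occurs --- so that the column it produces really does sit one level higher. One should also separate, as in Lemma \ref{lemma 12}, the cases $n=1$, where the corner expansion propagates all the way up the $b^t$-segment, and $n>1$, where it does not; the level count is uniform in both. As an alternative to the grading one can induct on $k$, observing that the forward cone of the top vertex of the corner column is a copy of $SC(b^ma^{k-1})$ (finite by the inductive hypothesis, with base case $SC(b^t)$ a line) together with copies of the complexes $SC(a^jb^l)$ supplied by Lemma \ref{lemma 12}; but that route forces one to track how these subcomplexes overlap along the shared spine, which the grading argument avoids.
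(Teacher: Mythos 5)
Your proof is correct and follows essentially the same route as the paper's: construct $SC(b^ta^k)$ by sewing copies of $ab^m$ onto readings of $b^na$ (with no folding, by Lemma \ref{no folding}), organized into vertical columns each of which advances one $a$-step toward $\beta$, so that the process terminates after at most $k$ columns. Your $\mathbb{Z}$-grading by $a$-exponent is simply a more formal justification of the termination count that the paper reads off directly from the geometric picture.
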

\begin{proof}
We draw edges labeled by $a$ horizontally and edges labeled by $b$ vertically. Then the linear automaton of $b^ta^k$, $(\alpha_0,\Gamma_0(b^ta^k),\beta_0)$, is shown in Figure \ref{02}. If $t<n$, then we cannot attach any 2-cell to  $(\alpha_0,\Gamma_0(b^ta^k),\beta_0)$. So, the above lemma is true for this case.

If $t\geq n$, then $t=q_1n+r_1$, where $q_1$ is the quotient and $r_1$ is a remainder and $0\leq r_1<n$. We can attach $q_1$ 2-cells in the first column along the vertical segment labeled by $b^t$ of $(\alpha_0,\Gamma_0(a^kb^t),\beta_0)$. After attaching all the 2-cells in the first column along the vertical segment labeled by $b^t$ we have created a new vertical segment labeled by $b^{mq_1}$, because there are total $q_1$ 2-cells and each 2-cells contains exactly $m$ edges on the newly attached side of the 2-cell.

\begin{figure}[h!]
\centering
\includegraphics[trim = 0mm 0mm 0mm 0mm, clip,width=3in]{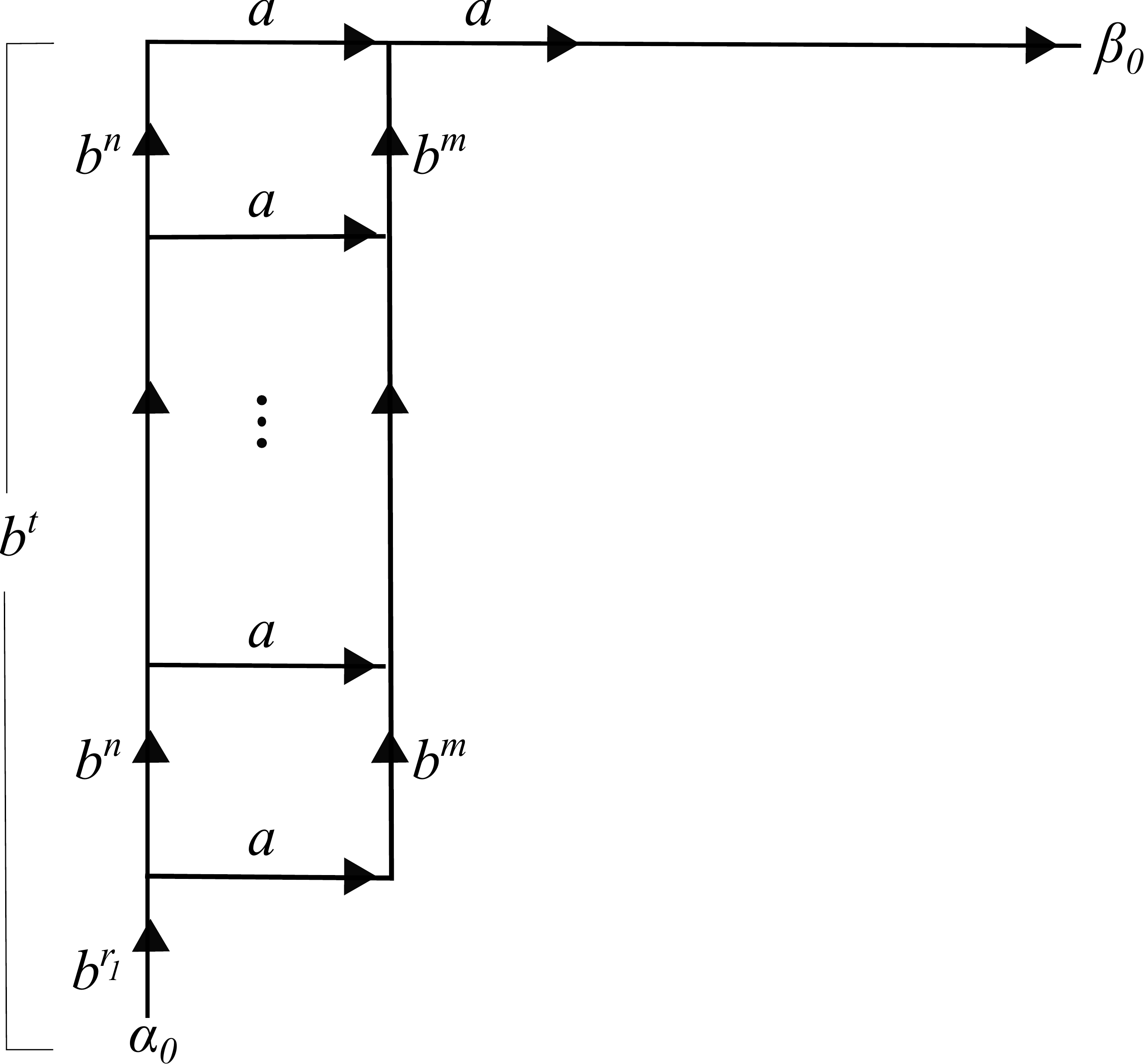}
\caption{Construction of $SC(b^ta^k)$.}
\label{02}
\end{figure}

If $k=1$ then the process of attaching new 2-cells terminates at this stage. Otherwise $mq_1=q_2n+r_2$, where $q_2$ is a quotient, $r_2$ is a remainder and $0\leq r_2 < n$. So,  We can attach a column of $q_2$ 2-cells along the vertical segment labeled by $b^{mq_1}$. Clearly, this process of attaching columns of new 2-cells terminates after $k$ steps. So, $SC(b^ta^k)$ is a finite complex.

\end{proof}
\begin{remark}
In the above construction of $SC(b^ta^k)$ in Lemma \ref{lemma 13} every new maximal vertical segment contains more edges than the other vertical side of the same column of 2-cells.

\end{remark}
\begin{theorem} \label{421}  For all $w\in \{a,b,a^{-1},b^{-1}\}^*$ the Sch\"{u}tzenberger complex of a word $w$, $SC(w)$, over the presentation $Inv\langle a,b|ab^m=b^na \rangle $ is finite. Hence the word problem is decidable for $M$.
\end{theorem}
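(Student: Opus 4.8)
The plan is to reduce to positive words and then build the relevant Sch\"utzenberger complexes explicitly. First note that $\langle a,b\mid ab^m=b^na\rangle$ is an Adian presentation: its left graph has the single edge joining $a$ to $b$ (the prefix letters of $ab^m$ and $b^na$) and its right graph has the single edge joining $b$ to $a$ (the suffix letters), so neither contains a cycle. Hence Theorem \ref{finite complexes} applies, and it suffices to prove that $SC(w)$ is finite for every positive word $w\in\{a,b\}^+$. Granting this, decidability of the word problem for $M$ follows in the usual way: given $u,v\in\{a,b,a^{-1},b^{-1}\}^*$ one effectively constructs the finite complexes $SC(u)$ and $SC(v)$ by Stephen's $\mathscr P$-expansion procedure and then checks, using Theorem \ref{Stephen's thm}, whether $u\in L(v)$ and $v\in L(u)$.

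So fix $w\in\{a,b\}^+$ and write it in syllable form $w=b^{t_0}a^{k_1}b^{t_1}\cdots a^{k_p}b^{t_p}$ with $p\ge 0$, each $k_i\ge 1$, $t_0,t_p\ge 0$ and $t_1,\dots,t_{p-1}\ge 1$. I would construct $SC(w)$ by $\mathscr P$-expansion starting from the linear automaton $(\alpha,\Gamma_0(w),\beta)$, drawing $a$-edges horizontally and $b$-edges vertically exactly as in Lemmas \ref{lemma 12} and \ref{lemma 13}. By Lemma \ref{no folding} no folding ever occurs in this construction, so the complex only grows by attaching two-sided $2$-cells, and a $2$-cell can be attached only along a subpath labelled $ab^m$ or along a subpath labelled $b^na$. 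Each such attachment is precisely the ``one-column'' move of Lemmas \ref{lemma 12}/\ref{lemma 13}: reading left to right, a vertical segment $b^m$ on the right of an $a$-edge is traded for a vertical segment $b^n$ on its left, or conversely. Thus $SC(w)$ is built from finitely many ``columns'' of $2$-cells stacked along the vertical ($b$-labelled) segments that appear between consecutive horizontal ($a$-labelled) edges; Lemma \ref{bigon} guarantees that coterminal positive paths produced in this way are filled in compatibly, so that distinct columns attached at the same place get identified rather than proliferating.

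The crux of the argument, and the step I expect to be the main obstacle, is to prove that this columnar construction stabilises after finitely many steps. Two mechanisms have to be controlled simultaneously. Horizontally, the construction cannot spread indefinitely: applying the retraction $a\mapsto 1$, $b\mapsto 0$ of $M$ onto $\mathbb Z$ shows every positive transversal of $SC(w)$ has exactly $p$ occurrences of $a$, and since $SC(w)$ has no directed cycles (Lemma \ref{no cycles 1}) this bounds the number of $a$-edges on any positive path, hence bounds the number of vertical segments at which columns can ever be created. Vertically, within each such segment the local process terminates as in Lemmas \ref{lemma 12} and \ref{lemma 13}: the relation $ab^m=b^na$ is length-decreasing when applied in the direction $ab^m\to b^na$ (here $m>n$ is essential), so ``forward'' column-building strictly shortens the $b$-segment it acts on and cannot recur, while the ``backward'' direction $b^na\to ab^m$ lengthens a $b$-segment but can be applied only by pushing one of the finitely many available $a$-edges across, hence only finitely often. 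The delicate point is that filling one column alters the lengths of the neighbouring $b$-segments and so may trigger new columns elsewhere; one must organise the bookkeeping --- essentially a simultaneous induction on the number of $a$-edges still to be ``pushed'' and on the total $b$-length on the shortening side --- to rule out an infinite chain reaction propagating back and forth between syllables. Once termination is established, the resulting complex is finite, deterministic and closed under elementary $\mathscr P$-expansion, so by Theorem \ref{closure} it is $SC(w)$; feeding this back through Theorem \ref{finite complexes} gives finiteness of $SC(w)$ for all $w\in\{a,b,a^{-1},b^{-1}\}^*$, and with it the decidability of the word problem for $M$.
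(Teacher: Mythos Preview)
Your reduction via Theorem~\ref{finite complexes} to positive words, the syllable decomposition, and the horizontal/vertical picture built from Lemmas~\ref{lemma 12} and~\ref{lemma 13} match the paper's framework exactly. The difference lies in how termination is proved.

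You correctly identify termination of the ``chain reaction'' between syllables as the crux, but you leave it as an unfinished bookkeeping problem (a simultaneous induction on $a$-edges to be pushed and total $b$-length on the shortening side). That sketch is not quite right as stated: the backward move $b^na\to ab^m$ does not consume an $a$-edge, it only moves the $b$-segment across one, so ``finitely many $a$-edges available'' does not by itself bound how many times backward moves fire, and the forward/backward moves interact. The paper sidesteps this entirely by a coarser but much cleaner organisation: rather than attaching individual $2$-cells, it attaches the whole finite complex $SC(a^{k_i}b^{t_i})$ to each maximal $a^{k_i}b^{t_i}$-subpath at once. The point is then purely combinatorial: after this first round there are at most $l-1$ \emph{new} maximal $a^kb^t$-paths, after the next round at most $l-2$, and so on, so the process stops in at most $l$ rounds, giving a finite complex $S'$. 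One then repeats symmetrically on the other side with the complexes $SC(b^{t_i}a^{k_{i+1}})$, again terminating in at most $l$ rounds. No length or retraction arguments are needed; the decreasing counter of new maximal paths does all the work. Your retraction $a\mapsto 1$, $b\mapsto 0$ is a valid observation but is not used (and is not needed) once one organises the construction this way.
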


\begin{proof}
  Since $\langle X|R\rangle$ is an Adian presentation, we just need to show that $SC(w)$ for all $w\in \{a,b\}^+$ is finite, then Theorem \ref{421} follows from Theorem \ref{finite complexes}.

If $w$ is of the form $a^k$ or $b^k$ for some $k\in \mathbb{N}$, then $SC(w)$ is finite.

We assume that $w \equiv a^{k_0}b^{t_0}a^{k_1}b^{t_1}...a^{k_l}b^{t_l}$, where $k_0,t_l\in \mathbb{N}\cup\{0\}$ and $k_i,t_j\in \mathbb{N}$ for $1\leq i\leq l$ and $0\leq j\leq l-1$.

We construct $SC(w)$ by drawing the edges labeled by $a$ horizontally and the edges labeled by $b$ vertically.  So, $(\alpha_0,\Gamma_0(w),\beta_0)$ looks like the diagram shown in Figure \ref{03}. We attach $SC(a^{k_i}b^{t_i})$ on the path labeled by $a^{k_i}b^{t_i}$ of $(\alpha_0,\Gamma_0(w),\beta_0)$ wherever it is possible to attach and we denote the resulting complex by $S_1$.  No two edges get identified with each other as a consequence of attaching these finite complexes to $(\alpha_0,\Gamma_0(w),\beta_0)$ by Lemma \ref{no folding}. As a consequence of attaching these finite complexes to the $(\alpha_0,\Gamma_0(w),\beta_0)$, we have created at most $l-1$ new maximal directed paths labeled by $a^kb^t$ for some $k,t\in \mathbb{N}$.
\begin{figure}[h!]
\centering
\includegraphics[trim = 0mm 0mm 0mm 0mm, clip,width=3in]{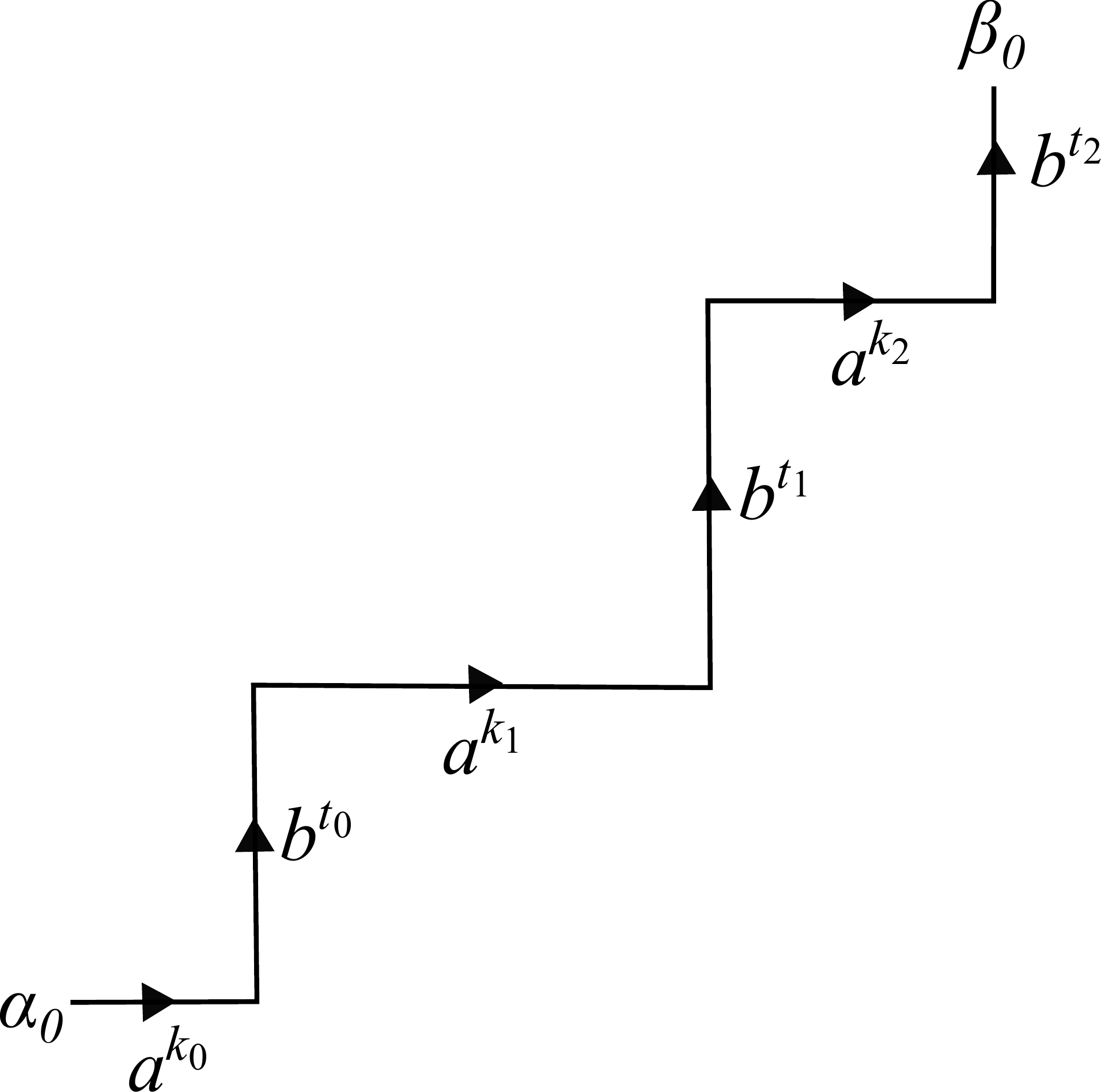}
\caption{$(\alpha_0,\Gamma_0(w),\beta_0)$}
\label{03}
\end{figure}

We attach finite complexes of the form $SC(a^kb^t)$ to every new maximal path labeled by a word of the form $a^kb^t$ for some $k,t\in \mathbb{N}$ in $S_1$. We denote the resulting complex by  $S_2$. Again by Lemma \ref{no folding}, no two edges get identified with each other in $S_2$. We can read at most $l-2$ new maximal directed paths in $S_2$ which are labeled by the words of the form $a^kb^t$ for some $k,t\in \mathbb{N}$. So, we repeat the process attaching finite Sch\"{u}tzenberger complexes of the words of the form $a^kb^t$ where ever it is possible to attach and denote the resulting complex by $S_3$. Note that this process of attaching finite Sch\"{u}tzenberger complexes of the words of the form $a^kb^t$ eventually terminates after at most $l$ steps. We denote the resulting complex by $S'$.

Now in $S'$, on the other side of the path labeled by $w$, we attach finite Sch\"{u}tzenberger complexes of the words $b^{t_i}a^{k_{i+1}}$  at the paths labeled by $b^{t_i}a^{k_{i+1}}$  for $0\leq i\leq l-1$, where ever it is possible to attach and denote the resulting complex by $S'_1$.  By Lemma \ref{no folding} no two edges in $S'_1$ get identified with each other as a consequence of attaching these finite complexes. As a consequence of attaching these finite complexes we have created at most $l-1$ new maximal paths which are labeled by the words of the form $b^ta^k$ for some $k,t\in \mathbb{N}$. So, we repeat the process of attaching finite Sch\"{u}tzenberger complexes of the words of the form $b^ta^k$  at the corresponding new paths in $S'_1$. We denote the resulting complex by $S'_2$. This process of attaching finite complexes of the words of the form $b^ta^k$ terminates after at most $l$ steps and we obtain a finite complex which is closed under elementary $\mathscr{P}$-expansion and folding. Hence, $SC(w)$ is a finite complex.

 \end{proof}

\section*{Acknowledgement}
The author of this paper is thankful to John Meakin and Robert Ruyle for their several useful suggestions.

\bibliographystyle{plain}
%\bibliography{nuthesis}

\end{document}